\let\underbrace\LaTeXunderbrace
\tikzset{->-/.style={decoration={markings,mark=at position #1 with {\arrow{>}}},postaction={decorate}}}
\definecolor{qqwuqq}{rgb}{0,0.39215686274509803,0}
\definecolor{qqqqff}{rgb}{0,0,1}
\definecolor{uuuuuu}{rgb}{0.26666666666666666,0.26666666666666666,0.26666666666666666}
\definecolor{xdxdff}{rgb}{0.49019607843137253,0.49019607843137253,1}
\definecolor{ududff}{rgb}{0.30196078431372547,0.30196078431372547,1}
\definecolor{zzttqq}{rgb}{0.8,0,0.4}
\definecolor{qqwuqq}{rgb}{0,0.39215686274509803,0}
\definecolor{qqqqff}{rgb}{0,0,1}
\definecolor{uuuuuu}{rgb}{0.26666666666666666,0.26666666666666666,0.26666666666666666}
\definecolor{xdxdff}{rgb}{0.49019607843137253,0.49019607843137253,1}
\definecolor{ududff}{rgb}{0.30196078431372547,0.30196078431372547,1}
\pgfplotsset{compat=1.15}
\let\underbrace\LaTeXunderbrace
    \def\MR#1{}
\theoremstyle{plain}
\newtheorem{Theorem}{Theorem}[section]
\newtheorem{Lemma}[Theorem]{Lemma}
\newtheorem{Corollary}[Theorem]{Corollary}
\newtheorem{Proposition}[Theorem]{Proposition}
\theoremstyle{definition}
\newtheorem{Discussion}[Theorem]{Discussion}
\newtheorem{Assumptions and Discussion}[Theorem]{Assumptions and Discussion}
\newtheorem{Example}[Theorem]{Example}
\newtheorem{Definition}[Theorem]{Definition}
\newtheorem{Remark}[Theorem]{Remark}
\newtheorem{Observation}[Theorem]{Observation}
\newtheorem{Notation}[Theorem]{Notation}
\newtheorem{Construction}[Theorem]{Construction}
\newtheorem{Setting}[Theorem]{Setting}
\theoremstyle{remark}
\newtheorem*{acknowledgment*}{Acknowledgment}
\def\dim{\operatorname{dim}}
\def\Ht{\operatorname{ht}} % height of the ideal
\def\indeg{\operatorname{indeg}} %  initial degree
\def\ini{\operatorname{in}} % initial ideal/term
\def\ker{\operatorname{ker}}
\def\KK{{\mathbb K}}
\def\MC{\operatorname{MC}}
\def\NN{{\mathbb N}}
\def\part{\operatorname{part}}
\def\ZZ{{\mathbb Z}}
\newcommand\bda{{\bm a}}
\newcommand\bdb{{\bm b}}
\newcommand\bdc{{\bm c}}
\newcommand\bde{{\bm e}}
\newcommand\bdu{\bm{u}}
\newcommand\bdv{\bm{v}}
\newcommand\bdX{{\bm X}}
\newcommand\bdx{{\bm x}}
\newcommand\bfA{\mathbf{A}}
\newcommand\bfB{\mathbf{B}}
\newcommand\bfC{\mathbf{C}}
\newcommand\bfT{\mathbf{T}}
\newcommand\bfX{\mathbf{X}}
\newcommand\bfS{\mathbb{S}}
\newcommand\bflam{\boldsymbol{\lambda}}
\newcommand\bfmu{\boldsymbol{\mu}}
\newcommand\calA{\mathcal{A}}
\newcommand\calB{\mathcal{B}}
\newcommand\calE{\mathcal{E}}
\newcommand\calF{\mathcal{F}}
\newcommand\calG{\mathcal{G}}
\newcommand\calJ{\mathcal{J}}
\newcommand\calK{\mathcal{K}}
\newcommand\calL{\mathcal{L}}
\newcommand\calR{\mathcal{R}}
\newcommand\calV{\mathcal{V}}
\newcommand{\pd}{\operatorname{pd}}
\newcommand{\rank}{\operatorname{rank}}
\def\reg{\operatorname{reg}}
\newcommand{\si}[1]{{\rm{si}}{\ensuremath{(#1)}}}
\definecolor{MyGreen}{RGB}{34,136,51}
\begin{document}

\title[Algebraic invariants of ladder determinantal modules]{Algebraic invariants of the special fiber ring of ladder determinantal modules}

\author[A. Costantini, L. Fouli, K. Goel, K.-N. Lin, H. Lindo, W. Liske, M. Mostafazadehfard]{Alessandra Costantini, Louiza Fouli,  Kriti Goel, Kuei-Nuan Lin, Haydee Lindo, Whitney Liske, Maral Mostafazadehfard}

%\thanks{\today}
\thanks{2020 {\em Mathematics Subject Classification}.
    Primary 13A30, %Associated graded rings of ideals (Rees ring, form ring), analytic spread and related topics
    13C40, % Linkage, complete intersections and determinantal ideals 
   13F65; %  	Commutative rings defined by binomial ideals, toric rings, etc.    
    Secondary 
    14M12, %Determinantal varieties
    13F50, %Rings with straightening laws, Hodge algebras
    05E40.  %	Combinatorial aspects of commutative algebra
    }

\thanks{Keywords: Special fiber ring, ladder determinantal modules, regularity, $a$-invariant, multiplicity, analytic spread, linear quotients, distributive lattice, maximal cliques, standard skew Young tableaux}

\address{Department of Mathematics, Tulane University, New Orleans, LA 70118, USA}
\email{acostantini@tulane.edu}

\address{Department of Mathematical Sciences, New Mexico State University, Las Cruces, NM 88003, USA}
\email{lfouli@nmsu.edu}

\address{BCAM-Basque Center for Applied Mathematics, Bilbao, Bizkaia 48009, Spain}
\email{kritigoel.maths@gmail.com}

\address{Department of Mathematics, The Penn State University, McKeesport, PA, 15132, USA}
\email{linkn@psu.edu}

\address{Department of Mathematics, Harvey Mudd College, Claremont, CA, 91711, USA}
\email{hlindo@hmc.edu}

\address{Department of Mathematics, Saint Vincent College, Latrobe, PA, 15650, USA}
\email{whitney.liske@stvincent.edu}

\address{Institute of Mathematics, Federal University of Rio de Janeiro, RJ,
21941-909, Brazil}
\email{maral@im.ufrj.br}

\begin{abstract}
We provide explicit formulas for key invariants of special fiber rings of ladder determinantal modules, that is, modules that are direct sums of ideals of maximal minors of a ladder matrix. Our results are given in terms of the combinatorial data of the associated ladder matrix. In particular, we compute its dimension, regularity, $a$-invariant, and multiplicity, which via \textsc{Sagbi}  degeneration coincide with those of  Hibi rings associated to a distributive lattice. Then, via  Gr\"{o}bner degeneration these calculations are reduced to quotients of polynomial rings by monomial ideals.  Our formula for the multiplicity of the special fiber ring of these ladder determinantal modules is obtained by counting the number of standard skew Young tableaux associated to a certain skew partition, and so provides a natural generalization of the classical formula for the degree of the Grassmannian. 
\end{abstract}

\maketitle

\section{Introduction}

We study the Castelnuovo–Mumford regularity, dimension, $a$-invariant, and multiplicity of the special fiber rings of \emph{ladder determinantal modules}, that is, modules that are direct sums of ideals of maximal minors of ladder  matrices. A ladder matrix is a matrix whose entries are either zero or distinct indeterminates and whose nonzero entries appear in the shape of a ladder, see \cite{ConcaLadder}.  Ladder matrices are an important subclass of sparse matrices in the sense of Giusti-Merle \cite{Giusti-Merle}.   The ideals of maximal minors of such matrices have been extensively studied since the 1980s, with foundational results by De Concini, Eisenbud, and Procesi \cite{DEP}, and subsequent expansions by Conca \cite{ConcaLadder} and others. For arbitrary modules, computations of these algebraic invariants are difficult, but they can become tractable in cases when the ideal or the module has particularly neat combinatorial features. We use the combinatorial structure of the ladder matrix to give formulas for these invariants in terms of the data of the ladder matrix. 

Rees algebras and special fiber rings play a central role in
understanding the interplay between algebra and geometry.  The Rees algebra $\calR(I)\coloneq R[It]$ of an ideal $I$ in a standard graded polynomial ring $R$ over a field $\KK$ is a bigraded subalgebra of $R[t]$, where $t$ is a new indeterminate. In turn, the special fiber ring of $I$ is the $\KK$-algebra $\calF(I)\coloneq\calR(I)\otimes_{R}\KK$.  Both of these algebras appear in the context of blowup constructions associated with the study of resolution of singularities.  More generally, one can define Rees algebras and special fiber rings of modules, see \cite{EHU, SUVModule}. These algebras  can be represented  as quotients of polynomial rings and are often studied through their defining equations, namely the generators of the ideals which define the quotient rings, and algebraic invariants of these defining ideals. 
For instance,  the Castelnuovo–Mumford regularity of a finitely generated graded algebra over a standard graded polynomial ring measures the module's homological and computational complexity by, for example, providing an upper bound for the largest degree of a minimal generator.  Moreover, the multiplicity encodes the asymptotic behavior of Hilbert functions and is a key tool in intersection theory, equisingularity theory, and the study of blowup algebras and integral closures. In algebraic geometry, the multiplicity appears naturally in the context of Bezout's theorem and Hilbert–Samuel functions, offering geometric insight into how varieties intersect and how local properties influence global behavior. 

Inspired by Conca, Herzog, and Valla's use of \textsc{Sagbi} degeneration in \cite{CHV96}, the authors of \cite{CDFGLPS} determined the defining equations of the Rees algebra and the special fiber ring of the ideals of maximal minors of any $2\times n$ sparse matrix; see \cite[Theorem~4.5]{CDFGLPS}. Further, in \cite[Corollaries~4.8,~4.9]{CDFGLPS} they identified the dimension, regularity, $a$-invariant, and multiplicity of the special fiber ring by reducing the calculation of the invariants to those of the initial algebra.

Our methods combine the use of \textsc{Sagbi} degeneration with the technique of Gr\"{o}bner degeneration, as in the work of Conca and Varbaro  \cite{CVGBDeform}, to  build on the recent results of Lin and Shen \cite{LinShenLadder}, who determined the defining equations of blowup algebras of ladder determinantal modules.  In particular, they recover the results of \cite[Theorem~4.5]{CDFGLPS}, as any $2 \times n$ sparse matrix is also a ladder matrix. Moreover, their work shows that the initial algebra of this special fiber ring (with respect to an appropriate monomial order) is in fact a Hibi ring -- a toric algebra associated to a distributive lattice, see \cite{HibiDistLatt}.

To describe our results, let $\bfX$ be an $n\times m$ ladder matrix, $L$ the ideal of maximal minors of $\bfX$, and $M=\bigoplus_{i=1}^rL$ with $r\in \NN$. We calculate algebraic invariants of the special fiber ring of $M$ in terms of the data of the underlying ladder matrix $\bfX$. 
For instance, in \Cref{analyticSpread} we show that the analytic spread of $M$, i.e. the dimension of $\calF(M)$, is $|\bfX|-n+r$, where $|\bfX|$ denotes the number of variables in $\bfX$. Since the problem of calculating the analytic spread of a module is usually challenging, having a combinatorial formula for the analytic spread not only allows us to derive formulas for all other aforementioned invariants, but also to calculate the reduction number of $M$, an important invariant in the study of integral dependence, and the $a$-invariant of $\calF(M)$, see \Cref{AinvariandReduction}. 
In \Cref{reg} and \Cref{thm:multiplicity} we explicitly calculate the regularity and multiplicity of $\calF(M)$, respectively. Furthermore, as a consequence of our results, we are able to express the rank and the cardinality of the poset of join-irreducible elements of the underlying distributive lattice, without explicitly constructing the poset; for instance the cardinality of the poset is $|\bfX|-n+r-1$, see \Cref{cor: regposet}.

Using the combinatorial structure of the ladder matrix, we associate a graph $\calG$ to the special fiber ring of $M$. In \Cref{SAGBIandGB} we show that to calculate the relevant invariants for $\calF(M)$ we may instead compute these invariants for a quotient ring, $T/Q$, for $Q$ the edge ideal of the complement graph of $\calG$ in a polynomial ring $T$. To compute the regularity of $T/Q$, we  first show in \Cref{Thm: Linear Quotients} that the Alexander dual of $Q$ has linear quotients, and then obtain a formula for the regularity in terms of the analytic spread and the sequence index (\Cref{si Index}) of a particular maximal clique of $\calG$, see \Cref{reg}.  
We remark that the method of using the linear quotient property to calculate the regularity can also be applied in more general settings; for instance, it has been used in the case of ideals of rational normal scrolls \cite{LSCMNormal}, ideals of Veronese type \cite{LSVer}, and direct sums of lexsegment ideals \cite{CosLS}. 

To calculate the multiplicity of $\calF(M)$, we first prove that there is a one-to-one correspondence between the maximal cliques of $\calG$ and the standard skew Young tableaux of a certain skew partition, see \Cref{thm: clique skew tableau}. The number of such standard skew Young tableaux then coincides with the multiplicity of $\calF(M)$, which we compute in \Cref{thm:multiplicity}. Our formula generalizes the known formula for the degree of the Grassmannian, that is the number of standard Young tableaux of a given partition; see for instance \cite[Theorem 2.31]{Grassmannians}.

We now briefly describe how the paper is organized. In \Cref{sec:prelim} we review several preliminary notions, such as \textsc{Sagbi} and Gr\"{o}bner degenerations, Rees algebras and special fiber rings of direct sums of ideals, and the ladder determinantal modules considered in \cite{LinShenLadder}. In \Cref{sec:linear}, we define the graph $\calG$ (see \Cref{LadderSetting}) and initiate a thorough analysis of the maximal cliques of $\calG$, which allows us to prove that the Alexander dual of $Q$ has linear quotients (\Cref{Thm: Linear Quotients}). In \Cref{sec:reg}, we continue our study of the maximal cliques of $\calG$, by introducing the notion of the sequence index of a maximal clique, and constructing a special maximal clique $\calA$ of smallest possible sequence index; see \Cref{MaxLengthClique}  and \Cref{best reg ind}. In \Cref{reg} we compute the regularity of $\calF(M)$ in terms of the sequence index of $\calA$; in \Cref{thm: si extended formula} we also identify simpler formulas that hold if the ladder matrix satisfies certain additional numerical assumptions. Finally, \Cref{sec:multi} is dedicated to the study of the multiplicity of $\calF(M)$ and also includes the necessary background on standard skew Young tableaux.

\section{Preliminaries and notations}
\label{sec:prelim}

We begin with the following conventions. As usual, $\NN$ denotes the set of positive integers, and $\ZZ_{\ge 0}$ denotes the set of nonnegative integers. Throughout, let $\KK$ be a field. Although this assumption is not always necessary, for simplicity, we assume that $\KK$ has characteristic zero.

\subsection{\textsc{Sagbi} degeneration}
\textsc{Sagbi} bases were first introduced by Robbiano and Sweedler in \cite{RobSwe} and independently by Kapur and Madlner in \cite{KapMad}; see also \cite[Chapter 11]{Sturmfels}. For detailed applications of \textsc{Sagbi} bases to the study of blowup algebras, we refer the reader to \cite{CHV96} and \cite{BCresPowers}.

\begin{Definition}
    \label{def:SAGBI}
    Let $R$ be a polynomial ring over a field $\KK$, and let $A\subseteq R$ be a finitely generated $\KK$-subalgebra. For a monomial order $\tau$, let $\ini_\tau(A)$ be the $\mathbb{K}$-subalgebra of $R$ generated by the initial monomials $\ini_\tau(a)$, for all $a\in A$. We call $\ini_\tau(A)$ the \textit{initial algebra} of $A$ with respect to $\tau$. A set of elements $\calB\subseteq A$ is called a \emph{\textsc{Sagbi} basis for $A$, with respect to a monomial order $\tau$}, if $\ini_\tau(A) = \KK[\ini_\tau(\calB)]$.
\end{Definition}

In general, \textsc{Sagbi} bases may be infinite. However, if an algebra $A$ admits a finite \textsc{Sagbi} basis, then many properties of $A$ are inherited from the corresponding properties of $\ini_\tau (A)$, which has in principle a simpler algebraic structure. Using this method, one can for instance compute certain numerical invariants of $A$ from the corresponding invariants of $\ini_\tau (A)$.

Recall that for a homogeneous $\mathbb{K}$-algebra $A$, the Hilbert series $H_A(t) \coloneq \sum_{i \ge 0} \dim_\KK (A_i)t^i$ can be written as a rational function $Q_A(t)/ (1-t)^d$, where $Q_A(t) \coloneq \sum_{i=0}^{s} h_i t^i \in \ZZ[t]$ with $h_s \neq 0$, and $d= \dim(A)$. The \emph{multiplicity} of $A$ is given by $e(A) \coloneq Q_A(1) = \sum_{i=0}^{s} h_i$ and the \emph{$a$-invariant} of $A$, a notion first introduced by Goto and Watanabe in \cite[Definition 3.1.4]{GWGraded}, is $a(A)\coloneq s-d$. The following theorem illustrates the use of \textsc{Sagbi} degeneration.

\begin{Theorem}
    [{\cite[Corollary 2.5]{CHV96}}] \label{degeneration}
Let $A$ be a homogeneous $\KK$-algebra and assume that $A$ has a finite \textsc{Sagbi} basis consisting of elements all of the same degree. Then $e(A)=e(\ini_\tau(A))$ and $a(A)=a(\ini_\tau(A))$. 
\end{Theorem}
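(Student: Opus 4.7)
The plan is to reduce both assertions to a single equality of Hilbert series, namely $H_A(t)=H_{\ini_\tau(A)}(t)$. Once this is known, writing the common series as $Q(t)/(1-t)^d$ with $d=\dim(A)$ and $Q(t)=\sum_{i=0}^s h_i t^i\in\ZZ[t]$, $h_s\ne 0$, the multiplicity and $a$-invariant are extracted directly from the same data on either side, since $\dim(A)$ and $\dim(\ini_\tau(A))$ both coincide with the pole order at $t=1$ of the shared Hilbert series. In particular, $e(A)=Q(1)=e(\ini_\tau(A))$ and $a(A)=s-d=a(\ini_\tau(A))$.

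The equality of Hilbert series I would establish by producing matching graded $\KK$-bases on both sides. The hypothesis that every element of the \textsc{Sagbi} basis $\calB=\{b_1,\ldots,b_k\}$ has the same degree $d_0$ is crucial here: for any exponent $\alpha\in\ZZ_{\ge 0}^k$, the product $b^\alpha\coloneq b_1^{\alpha_1}\cdots b_k^{\alpha_k}$ and its leading term $\ini_\tau(b)^\alpha\coloneq \ini_\tau(b_1)^{\alpha_1}\cdots \ini_\tau(b_k)^{\alpha_k}$ automatically lie in the same graded component, of degree $d_0(\alpha_1+\cdots+\alpha_k)$. By \Cref{def:SAGBI}, the monomials $\ini_\tau(b)^\alpha$ span $\ini_\tau(A)$ as a $\KK$-vector space, so one can select a subset $\calS\subseteq\ZZ_{\ge 0}^k$ for which $\{\ini_\tau(b)^\alpha:\alpha\in\calS\}$ is a $\KK$-basis of $\ini_\tau(A)$. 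I would then show that $\{b^\alpha:\alpha\in\calS\}$ is a $\KK$-basis of $A$: linear independence is immediate because the leading monomials are distinct by construction, while spanning follows from the standard \textsc{Sagbi} subduction argument---given homogeneous $f\in A$, write $\ini_\tau(f)$ as a $\KK$-linear combination of the basis elements $\ini_\tau(b)^\alpha$ with $\alpha\in\calS$, subtract the corresponding $\KK$-linear combination of lifts $b^\alpha$ from $f$, and iterate. Termination within each fixed graded piece follows since the monomial order is a well-order and each graded component is finite dimensional. Matching the two bases degree by degree then yields $H_A(t)=H_{\ini_\tau(A)}(t)$.

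The main obstacle I anticipate is justifying that the subduction process both terminates and produces lifts staying in the same graded piece as the original element. This is precisely where the uniform-degree hypothesis on $\calB$ is indispensable: without it, the lifts $b^\alpha$ would not match the degrees of $\ini_\tau(b)^\alpha$, so the graded bases of $A$ and $\ini_\tau(A)$ would no longer be indexed identically, and the equality of Hilbert functions (hence of $e$ and $a$) could fail. Once this technical bookkeeping is in place, the passage from $H_A=H_{\ini_\tau(A)}$ to the stated equalities of $e$ and $a$ is a direct formal reading-off from the rational expression of the Hilbert series.
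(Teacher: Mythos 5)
Your proof is correct, and it follows essentially the same route as the source the paper relies on: the paper itself gives no proof of \Cref{degeneration}, quoting it directly from \cite{CHV96}, where the argument is exactly the one you outline --- equality of Hilbert series between $A$ and $\ini_\tau(A)$, followed by reading off $e$ and $a$ (and the dimension, as the pole order at $t=1$) from the common rational expression. Your basis-matching via subduction is a valid, self-contained way to obtain the Hilbert series equality; the termination argument within each finite-dimensional graded piece is sound, and linear independence from distinct leading monomials is standard.

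One correction to your closing paragraph, though: the uniform-degree hypothesis is \emph{not} what makes the Hilbert functions agree. For any homogeneous $b$ one has $\deg \ini_\tau(b)=\deg b$, so even for a \textsc{Sagbi} basis of mixed degrees the lifts $b^\alpha$ and the monomials $\ini_\tau(b)^\alpha$ automatically lie in the same graded piece; more directly, $(\ini_\tau(A))_j$ is spanned by $\{\ini_\tau(f): f\in A_j\}$ for every $j$ (a product of initial terms is the initial term of the product), which gives $\dim_\KK A_j=\dim_\KK(\ini_\tau(A))_j$ with no hypothesis beyond $A$ being graded. What the hypotheses actually buy is different: finiteness of the \textsc{Sagbi} basis makes $\ini_\tau(A)$ finitely generated, hence Noetherian, so both Hilbert series are rational and dimension can be read off as the pole order; and equigeneratedness makes $A$ and $\ini_\tau(A)$ standard graded after rescaling the grading, so that the normal form $Q(t)/(1-t)^d$ used in the paper to define $e(\cdot)$ and $a(\cdot)$ applies. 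So your proof goes through as written, but the hypothesis is indispensable where the invariants are \emph{defined}, not where the Hilbert functions are compared.
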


\subsection{Gr\"{o}bner degeneration and linear quotients}
\label{sec:Grobner}

 Let $R$ be a polynomial ring over a field $\KK$ and $I$ a graded ideal of $R$. Recall that if $\beta_{i,j}$ are the graded Betti numbers of the module $R/I$
then the \emph{Castelnuovo--Mumford regularity} of $R/I$ is defined to be
$$\reg(R/I)\coloneqq \max \Set{j: \beta_{i,i+j}\ne 0}.$$

The following result, which simplifies the calculation for the regularity, is a particular case of a more general statement about Gr\"{o}bner degeneration due to Conca and Varbaro \cite{CVGBDeform}.

\begin{Theorem}
  [{\cite[Corollaries 2.7 and 2.11]{CVGBDeform}}] \label{CVGB}
Let $I$ be a homogeneous ideal of $R$ such that $\ini_{\omega}(I)$ is squarefree for some monomial order $\omega$. Then 

\begin{enumerate}[a]
    \item  $\reg (R/I) = \reg (R/ \ini_{\omega}(I))$;
    \item $R/I$ is Cohen-Macaulay of codimension $c$ if and only if $R/\ini_{\omega}(I)$ is Cohen-Macaulay of codimension $c$.
 \end{enumerate}
\end{Theorem}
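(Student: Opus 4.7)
The plan is to realize the Gr\"obner degeneration as a one-parameter flat family, reduce both statements to a comparison of graded local cohomology between the generic and special fibers, and then leverage the squarefree hypothesis to promote standard semicontinuity into equality at the extremal positions.

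First, I would choose a weight vector inducing $\ini_\omega(I)$ and form the $\omega$-homogenization $\widetilde{I}\subseteq R[t]$, so that $R[t]/\widetilde{I}$ is a flat $\KK[t]$-family with generic fiber $R/I$ and special fiber $R/\ini_\omega(I)$. Standard upper semicontinuity of graded Betti numbers along such a family yields
$$\beta_{i,j}(R/I)\le \beta_{i,j}(R/\ini_\omega(I)) \quad \text{for all } i,j,$$
so $\reg(R/I)\le \reg(R/\ini_\omega(I))$, while the Hilbert series is constant along the family, so dimension and codimension are automatically preserved. Thus one direction of (a) and the codimension statement in (b) come essentially for free from flatness.

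The technical core, and the main obstacle, is the reverse regularity inequality. My plan is to invoke the Grothendieck--Serre formula
$$\reg(R/J) = \max\{i+j : H^i_{\fm}(R/J)_j \ne 0\}$$
and argue that the extremal graded pieces of local cohomology do not shrink under the degeneration when the special fiber is Stanley--Reisner. For a squarefree monomial ideal, Hochster's formula gives an explicit combinatorial description of each $H^i_{\fm}(R/\ini_\omega(I))_j$ in terms of reduced simplicial homology of links in the associated Stanley--Reisner complex; combining this with the comparison morphism between the cohomologies of the two fibers in the flat family should force the Betti number inequality to be sharp at every bidegree contributing to $\reg(R/\ini_\omega(I))$, giving (a). Part (b) then follows because, with the extremal graded pieces of local cohomology agreeing in all indices, the vanishing $H^i_{\fm}(R/\ini_\omega(I))=0$ for $i<\dim(R/\ini_\omega(I))$ transfers to the corresponding vanishing for $R/I$, which is precisely Cohen--Macaulayness of the prescribed codimension. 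The delicate point I would expect to wrestle with is the transfer of extremality across the degeneration, where the squarefree hypothesis is truly indispensable: without it, the semicontinuity inequality can be strict at the extremal bidegrees, as classical examples of non-squarefree initial ideals show.
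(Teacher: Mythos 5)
This theorem is not proved in the paper at all: it is imported verbatim from Conca--Varbaro \cite{CVGBDeform}, so the relevant comparison is with their proof, and against that benchmark your proposal has a genuine gap exactly where the theorem lives. The setup you describe (the $\omega$-homogenization giving a flat $\KK[t]$-family, upper semicontinuity $\beta_{i,j}(R/I)\le\beta_{i,j}(R/\ini_\omega(I))$, constancy of the Hilbert series, hence $\reg(R/I)\le\reg(R/\ini_\omega(I))$ and preservation of codimension) is classical and needs no squarefree hypothesis. The entire content of the theorem is the reverse direction, and your treatment of it --- ``the comparison morphism between the cohomologies of the two fibers \ldots should force the Betti number inequality to be sharp at every bidegree contributing to $\reg(R/\ini_\omega(I))$'' --- is an assertion of the conclusion, not an argument. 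There is no morphism of fibers that does this for you: the obstruction is the $t$-torsion of the local cohomology $H^{i}_{\fm}$ of the total family, and semicontinuity by itself permits strict inequality at extremal bidegrees. What Conca and Varbaro actually prove is the much stronger statement that $\dim_\KK H^i_{\fm}(R/I)_j = \dim_\KK H^i_{\fm}(R/\ini_\omega(I))_j$ for \emph{all} $i,j$, and their mechanism is specific: by Hochster's formula the local cohomology of a Stanley--Reisner ring is concentrated in non-positive ($\ZZ^n$-)degrees, and a delicate descending induction on cohomological degree uses this concentration to kill the $t$-torsion in the family's local cohomology. Your sketch names Hochster's formula but never uses the degree-concentration, which is the one place squarefreeness enters; without it the induction has no starting point, and the classical non-squarefree counterexamples you allude to are exactly the cases where torsion survives.

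The same gap undermines part (b) as you argue it. You transfer the vanishing $H^i_{\fm}(R/\ini_\omega(I))=0$ for $i<\dim$ from the special fiber to the generic fiber $R/I$ --- but that is the \emph{easy} direction, already standard for any initial ideal by semicontinuity. The nontrivial implication in (b) is that $R/I$ Cohen--Macaulay forces $R/\ini_\omega(I)$ Cohen--Macaulay, i.e., vanishing must pass from the generic fiber to the special fiber, \emph{against} the direction semicontinuity gives; this is false without the squarefree hypothesis and follows only from the full equality of local cohomology Hilbert functions above. Your proposal only claims sharpness of the Betti inequality at the bidegrees computing regularity, and even granting that, it would not yield the vanishing of $H^i_{\fm}(R/\ini_\omega(I))$ in all cohomological degrees $i<\dim$ needed for (b). So the plan, as written, proves (a) in one direction and (b) in its easy direction, and defers the substance of both to a comparison principle that does not exist at this level of generality.
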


 For a squarefree monomial ideal $J$, the following result provides a method to calculate the multiplicity of $R/J$ in terms of a certain graded
 Betti number of $R/J^{\vee}$, where $J^{\vee}$ denotes the Alexander dual ideal of $J$ (see \cite[Definition 1.35]{MillerSturmfelsBook}). 

\begin{Proposition}
  [{\cite[Lemma 4.1]{Terai}}] \label{lem:Terai_lem}
Let $R$ be a polynomial ring over a field $\KK$ and let $J$ be a squarefree monomial ideal, with Alexander dual ideal $J^{\vee}$. Then, $e(R/J)=\beta_{1,h_1}(R/J^\vee)$, where $h_1=\indeg(J^\vee)$ is the initial degree of $J^{\vee}$, namely, the least degree of a minimal monomial generator of $J^{\vee}$.      
\end{Proposition}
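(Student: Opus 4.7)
The plan is to translate both sides of the identity into combinatorial data of a simplicial complex via the Stanley--Reisner correspondence, and then match them directly.

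First, I would write $J = I_\Delta$ for a simplicial complex $\Delta$ on $[n] = \{1, \ldots, n\}$, so that the Alexander dual satisfies $J^\vee = I_{\Delta^\vee}$, where $\Delta^\vee = \{F \subseteq [n] : [n]\setminus F \notin \Delta\}$. A standard computation identifies the minimal monomial generators of $J^\vee$ with the monomials $\prod_{i \notin F} x_i$ as $F$ ranges over the facets of $\Delta$. Setting $d = \dim(R/J) = \dim \Delta + 1$, it then follows that
\[
h_1 = \indeg(J^\vee) = n - d,
\]
since the smallest complement of a facet of $\Delta$ has size $n - d$, and the minimal generators of $J^\vee$ of degree $h_1$ are in bijection with the facets of $\Delta$ of maximum cardinality $d$. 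Because $\beta_{1, h_1}(R/J^\vee)$ counts the number of minimal generators of $J^\vee$ in degree $h_1$, this yields
\[
\beta_{1, h_1}(R/J^\vee) = f_{d-1}(\Delta),
\]
where $f_{d-1}(\Delta)$ denotes the number of $(d-1)$-dimensional faces of $\Delta$.

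Next, I would show that $e(R/J) = f_{d-1}(\Delta)$ by invoking the classical Hilbert series expansion for the Stanley--Reisner ring,
\[
H_{R/J}(t) = \sum_{i=0}^{d} \frac{f_{i-1}(\Delta)\, t^i}{(1-t)^i},
\]
with the convention $f_{-1} = 1$. Collecting terms over the common denominator $(1-t)^d$ gives $H_{R/J}(t) = Q_{R/J}(t)/(1-t)^d$, and evaluating at $t = 1$ leaves only the $i = d$ summand, so that $e(R/J) = Q_{R/J}(1) = f_{d-1}(\Delta)$. Combining this with the previous computation yields the desired equality.

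The main care point is verifying the bijection between minimal generators of $J^\vee$ and facets of $\Delta$: one needs that the facets of $\Delta$ form an antichain under inclusion (so their complements also do, ensuring no redundancy among generators), and that the degree threshold $h_1 = n - d$ is attained precisely by the $(d-1)$-dimensional facets. These verifications are routine but indispensable for the argument, and they are the only place where the correspondence between the algebraic side (degrees of generators of $J^\vee$, graded Betti numbers) and the topological side (dimensions of faces of $\Delta$) could go astray.
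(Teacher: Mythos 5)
Your proof is correct. The paper gives no argument of its own for this statement --- it is quoted verbatim from \cite[Lemma 4.1]{Terai} --- so there is no internal proof to compare against; your proposal is a correct, self-contained rendition of the standard proof of Terai's lemma. Both halves check out: the minimal generators of $J^\vee$ are exactly the monomials $\prod_{i \notin F} x_i$ for $F$ a facet of $\Delta$ (the antichain point you flag is precisely what makes these a minimal generating set), so the generators of least degree $h_1 = n-d$ biject with the $(d-1)$-dimensional facets and $\beta_{1,h_1}(R/J^\vee) = f_{d-1}(\Delta)$; and on the other side, your numerator $\sum_{i=0}^{d} f_{i-1}(\Delta)\, t^i (1-t)^{d-i}$ sits over $(1-t)^d$ with $d = \dim(R/J)$ and does not vanish at $t=1$ (since $f_{d-1}(\Delta) \ge 1$), so it is the polynomial $Q_{R/J}(t)$ in the paper's definition of multiplicity and $e(R/J) = f_{d-1}(\Delta)$ follows as you say.
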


Recall that a graded ideal $I$ has \emph{linear quotients} if there exists a system of homogeneous generators $f_1, \ldots, f_{\mu}$ of $I$
 such that the colon ideal $Q_i=\braket{f_1, \ldots, f_{i-1}}:f_i$ is generated by linear forms for all $i$, see for example \cite[Section~8.2.1]{HHMon}. 
The following lemma explains how the projective dimension of $I$ relates to the quotient ideals $Q_i$.

\begin{Theorem}
    [{\cite[Corollary 8.2.2]{HHMon}}] \label{MaxLenghtToPd}
    Let $I$ be an equigenerated ideal in the polynomial ring $S$. Suppose that $I$ has a minimal generating set $f_1,\dots,f_{s}$ such that the colon ideal $Q_i\coloneqq \braket{f_1,\dots,f_{i-1}}:_S f_i$ is generated by linear forms for each $i\in[2,s]$.
    Then the projective dimension of $I$ is given by ${\rm{projdim}}_S(I)=\max\{\mu(Q_i): i\in [2,s]\}$.
   \end{Theorem}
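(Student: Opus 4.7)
The plan is to prove the statement by induction on the number of generators $s$, using the iterated mapping cone construction on the short exact sequences that the linear quotients hypothesis supplies. Set $I_i \coloneqq \braket{f_1,\dots,f_i}$ and let $d$ denote the common degree of the generators $f_j$. For each $i\in[2,s]$, the definition of $Q_i$ produces the short exact sequence
$$0 \longrightarrow (S/Q_i)(-d) \xrightarrow{\ \cdot f_i\ } S/I_{i-1} \longrightarrow S/I_i \longrightarrow 0.$$
The strategy is to track $\projdim_S(S/I_i)$ through these sequences and recover $\projdim_S(I) = \projdim_S(S/I_s) - 1$ at the end.

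As a first step, I would compute $\projdim_S(S/Q_i)$. Since $Q_i$ is generated by $\mu(Q_i)$ linear forms that form a minimal generating set, these forms are $\KK$-linearly independent in $S_1$ and hence form a regular sequence in $S$. Consequently $S/Q_i$ admits a Koszul resolution, so $\projdim_S(S/Q_i) = \mu(Q_i)$. At the base of the induction, $S/I_1$ has projective dimension $1$ since $f_1$ is a homogeneous nonzerodivisor.

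Next, I would apply the mapping cone construction to the short exact sequence above. Lifting the multiplication-by-$f_i$ map to a morphism $\phi\colon F_\bullet \to G_\bullet^{(i-1)}$ between minimal free resolutions, the mapping cone yields a free resolution of $S/I_i$ of length at most $\max\{\projdim_S(S/Q_i) + 1,\, \projdim_S(S/I_{i-1})\}$. Iterating across $i = 2, \dots, s$ gives the upper bound
$$\projdim_S(S/I_s) \le \max\{\mu(Q_i) + 1 : i \in [2,s]\},$$
where the inequality $\mu(Q_i) \ge 1$, valid because $Q_i$ is a proper nonzero ideal for $i \ge 2$, dominates the base contribution.

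The main obstacle is verifying that each mapping cone is in fact a \emph{minimal} free resolution, so that equality holds at every step. Concretely, I need to show that the comparison maps $\phi_k$ can be chosen with all matrix entries in the homogeneous maximal ideal of $S$. I would address this by tracking internal degrees through the induction: an easy bookkeeping argument shows that $G_k^{(i-1)}$ is generated in internal degree $d + k - 1$ for $k \ge 1$, while $F_k$ sits in degree $d + k$, so the entries of $\phi_k$ necessarily have strictly positive degree (and the entry of $\phi_0$ is $f_i$, of positive degree $d$). This is the Herzog-Takayama observation for equigenerated ideals with linear quotients; once it is in hand, equality propagates through the induction, and subtracting $1$ gives the claimed formula $\projdim_S(I) = \max\{\mu(Q_i) : i \in [2,s]\}$.
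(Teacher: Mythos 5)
Your proof is correct, and it is essentially the paper's proof: the paper does not argue this statement itself but quotes it from \cite[Corollary 8.2.2]{HHMon}, and your argument --- the short exact sequences $0 \to (S/Q_i)(-d) \xrightarrow{\cdot f_i} S/I_{i-1} \to S/I_i \to 0$, the Koszul resolution of $S/Q_i$ coming from the fact that minimally generating linear forms are a regular sequence, and the degree bookkeeping (the resolution of $S/I_{i-1}$ stays $d$-linear, so every comparison map $\phi_k$ has entries of positive degree and each mapping cone is minimal) --- is precisely the Herzog--Takayama mapping-cone proof given in that source. The one claim you assert without justification, that $Q_i$ is a nonzero proper ideal for $i \ge 2$, is immediate: $Q_i \supseteq I_{i-1} \ne 0$, and $f_i \notin I_{i-1}$ by minimality of the generating set.
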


\subsection{Blowup algebras} \label{sec:Rees}

Let $R = \KK[x_1,\ldots,x_d]$ be the polynomial ring over a field $\KK$ in the variables $x_1,\ldots,x_d$.  Suppose that $\{I_1, \dots, I_r\}$ is a collection of equigenerated ideals in $R$. For each $j\in [r]$, suppose that $\{f_{1,j}, \dots,f_{\mu_j,j}\}$ is a minimal homogeneous generating set of $I_j$. 
We examine the blowup algebras associated with the module $M= \bigoplus_{j=1}^r I_j$. 

For this purpose, we choose a set of indeterminates $\{T_{i,j}:  i \in [\mu_j], j \in [r] \}$ corresponding to the generators of the ideals $I_1, \dots, I_r$, and we introduce two polynomial rings: namely, $S \coloneq R[\{T_{i,j}: i \in [\mu_j], j \in [r] \}]$ and $T\coloneq \KK[\{T_{i,j}: i \in [\mu_j], j \in [r] \}]$.

The \emph{Rees algebra}, $\mathcal{R}(M)$, and the \emph{special fiber ring}, $\mathcal{F}(M)$, of such a module $M$ are defined as follows: 
\begin{align*}
\mathcal{R}(M) &\coloneqq R[I_1 t_1, \ldots,I_r t_r ] \\
& \, =\bigoplus_{a_i \in \mathbb{Z}_{\ge 0}} I_1^{a_1}\cdots I_r^{a_r}t_1^{a_1}\cdots t_r^{a_r} 
   \, \subseteq R[t_1,\ldots,t_r], \\ 
   \smallskip
\mathcal{F}(M) & \coloneqq \calR(M) \otimes_R \KK  \cong \KK[f_{i,j}t_j : i \in [\mu_j], j \in [r] ].\\   
\end{align*}  
We remark here that $\mathcal{R}(M)$ and $\mathcal{F}(M)$ are also known as the \emph{multi-Rees algebra} and \emph{multi-fiber ring} of the ideals $I_1, \ldots, I_r$, respectively.

One can realize these two algebras as quotients of the polynomial rings $S$ and $T$, respectively, via the following surjective homomorphisms:
\begin{align*}
    \phi: S\longrightarrow \mathcal{R}(M), \quad \psi: T \longrightarrow \mathcal{F}(M),
\end{align*}
determined by $\phi(T_{i,j})=\psi(T_{i,j})=f_{i,j}t_j$ for each  $i\in [\mu_j]$ and $j\in [r]$.
The ideals $\ker (\phi)$ and $\ker (\psi)$ are called the \emph{defining} or \emph{presentation ideals} of $\mathcal{R}(M)$ and $\mathcal{F}(M)$, respectively.

The dimension of the special fiber ring is an important invariant of the module $M$, called the \emph{analytic spread} of $M$ and denoted as $\ell(M) \coloneq \dim(\mathcal{F}(M))$. The analytic spread coincides with the minimal number of generators of any minimal reduction $U$ of $M$. Recall, that for a module $M$ a minimal reduction (minimal with respect to inclusion) is a submodule $U$ of $M$ such that $[\mathcal{R}(M)]_{s+1} = U [\mathcal{R}(M)]_s$ for some $s\ge 0$.  Moreover, for a minimal reduction $U$ of $M$, the smallest $s$ such that the above equality holds is called the reduction number of $U$, denoted $r_U(M)$. This number, for instance, controls the generating degree of $\mathcal{R}(M)$ as a module over $\mathcal{R}(U)$. The \emph{reduction number} of $M$ is $r(M)=\{r_U(M): U \mbox{ a minimal reduction of } M\}$.   
We refer the reader to \cite[Section 8.2]{HS06} and \cite[Section 8]{Vasconcelos2005} for the definition and further discussion on the reduction numbers of ideals and modules.

The following proposition relates $\ell(M)$ and $r(M)$ with the $a$-invariant and regularity of $\mathcal{F}(M) \cong T/\ker (\psi)$, under the assumption that $\mathcal{F}(M)$ is Cohen-Macaulay.

\begin{Proposition} \label{CosLS:2.18}
 For $1 \le i \le r$, let $I_i$ be nonzero ideals in $R= \mathbb{K}[x_1, \ldots, x_d]$ and let $M=\bigoplus_{i=1}^{r}I_i$. Suppose that $\mathcal{F}(M)$ is Cohen--Macaulay and let $\ell(M)$ denote the analytic spread of $M$. 
Then 
$$\reg( \mathcal{F}(M)) = r(M) \quad \text{and} \quad a(\mathcal{F}(M)) = \reg( \mathcal{F}(M)) - \ell(M) = r(M) - \ell(M),$$
where $r(M)$ is the reduction number of $M$.
\end{Proposition}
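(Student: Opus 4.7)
The plan is to prove $\reg(\mathcal{F}(M)) = r(M)$ first, since the second equality $a(\mathcal{F}(M)) = \reg(\mathcal{F}(M)) - \ell(M)$ is then immediate from the general Hilbert-series theory of Cohen--Macaulay standard graded algebras. Indeed, $\mathcal{F}(M)$ is standard graded in the $T$-variables because each generator $\psi(T_{i,j}) = f_{i,j}t_j$ has $T$-degree one, and by assumption it is Cohen--Macaulay of dimension $d = \ell(M)$. Hence its Hilbert series takes the form $Q(t)/(1-t)^d$ with $Q(t) \in \mathbb{Z}[t]$ of some degree $s$, and for Cohen--Macaulay standard graded algebras one has $\reg(\mathcal{F}(M)) = s$. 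Using the definition recalled in the preliminaries, $a(\mathcal{F}(M)) = s - d = \reg(\mathcal{F}(M)) - \ell(M)$, as desired.

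For the main equality $\reg(\mathcal{F}(M)) = r(M)$, after an innocuous extension of scalars I may assume that $\KK$ is infinite, since none of the invariants $\reg$, $\ell$, or $r$ is affected. Pick a minimal reduction $U$ of $M$; then $U$ is generated by $\ell = \ell(M)$ elements, which via the embedding $M \hookrightarrow [\mathcal{R}(M)]_1$ sending $(u^{(1)}, \ldots, u^{(r)}) \mapsto \sum_i u^{(i)} t_i$ and the natural projection to $\mathcal{F}(M)$ yield $\ell$ linear forms $\theta_1, \ldots, \theta_\ell \in \mathcal{F}(M)$. Because $U$ is a reduction of $M$, a graded Nakayama argument shows that $(\theta_1, \ldots, \theta_\ell)$ is a reduction of the irrelevant ideal $\mathcal{F}(M)_+$, and since $\ell = \dim \mathcal{F}(M)$ these forms constitute a homogeneous linear system of parameters. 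The Cohen--Macaulay assumption makes them a regular sequence, so the Artinian quotient $\bar A \coloneqq \mathcal{F}(M)/(\theta_1, \ldots, \theta_\ell)\mathcal{F}(M)$ satisfies $\reg(\bar A) = \reg(\mathcal{F}(M))$. On the other hand, $\bar A$ is Artinian and standard graded, so $\reg(\bar A)$ equals its top nonzero degree, which by definition is exactly $r_U(M)$: the identity $[\mathcal{F}(M)]_{n+1} = (\theta_1, \ldots, \theta_\ell)\,[\mathcal{F}(M)]_n$ is the mod-$\mathfrak{m}$ translation of $[\mathcal{R}(M)]_{n+1} = U \cdot [\mathcal{R}(M)]_n$, and the smallest such $n$ is precisely the reduction number $r_U(M)$. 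Therefore $\reg(\mathcal{F}(M)) = r_U(M)$ for every minimal reduction $U$, and in particular $\reg(\mathcal{F}(M)) = r(M)$.

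The main obstacle is a careful verification of the correspondence between minimal reductions of the module $M = \bigoplus_i I_i$ and linear systems of parameters of $\mathcal{F}(M)$ in the multigraded Rees setting used in the paper. Concretely, one must confirm that $\mu(U) = \ell(M) = \dim \mathcal{F}(M)$ for a minimal reduction $U$, that the reduction property descends faithfully through the embedding $M \hookrightarrow [\mathcal{R}(M)]_1$ and the specialization at $\mathfrak{m}$ so that the forms $\theta_1, \ldots, \theta_\ell$ really form a linear Noether normalization of $\mathcal{F}(M)$, and that the equality $[\mathcal{R}(M)]_{n+1} = U \cdot [\mathcal{R}(M)]_n$ is equivalent to the standard reduction identity $M^{n+1} = UM^n$ in the $t$-total-degree grading. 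Once this dictionary is in place, the rest of the argument is a direct application of the standard Cohen--Macaulay theory recalled above.
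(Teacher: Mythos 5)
Your proposal is correct and follows essentially the same route as the paper's proof: assume $\KK$ infinite, take a minimal reduction $U$ whose generators give a linear system of parameters in $[\mathcal{F}(M)]_1$, use the Cohen--Macaulay hypothesis to make it a regular sequence, and identify the top nonzero degree of the Artinian quotient $\mathcal{F}(M)/U\mathcal{F}(M)$ with $r_U(M)$ via graded Nakayama. The only cosmetic differences are that you derive $a(\mathcal{F}(M)) = \reg(\mathcal{F}(M)) - \ell(M)$ from the Hilbert series of a Cohen--Macaulay standard graded algebra (the paper instead computes $a(\mathcal{F}(M)) = a(\mathcal{F}(M)/U\mathcal{F}(M)) - \ell$ directly, citing Nagel's lemmas for the preservation of regularity), and that you observe $\reg(\mathcal{F}(M)) = r_U(M)$ for \emph{every} minimal reduction $U$, which slightly strengthens the paper's choice of a $U$ attaining $r(M)$.
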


\begin{proof}
The result is well-known in the case when $r=1$; see, e.g., \cite[Proposition 6.6]{CNPY} and \cite[Proposition 2.2.9]{JonathanThesis}. We will prove the statement in the case when $r>1$ following a similar argument as in the case $r=1$. We regard $\calF(M)$ as a standard graded ring over $R$.

 First, note that since $\mathbb{K}$ is an infinite field, there exists a minimal reduction $U$ of $M$ such that $r_U(M) = r(M)$. Let $g_1, \ldots, g_{\ell}$ be a minimal generating set for $U$, where $\ell=\ell(M)= \dim(\mathcal{F}(M))$.  The images of these elements in $[\mathcal{F}(M)]_1$ form a linear system of parameters of $\mathcal{F}(M)$, hence a linear regular sequence as $\mathcal{F}(M)$ is Cohen--Macaulay.
 Therefore, $a(\mathcal{F}(M)) = a(\mathcal{F}(M)/U \mathcal{F}(M)) - \ell$ (see for instance \cite[Corollary 3.6.14]{BHCMbook}). 

 Now, as $a(\mathcal{F}(M)/U \mathcal{F}(M))$ is an Artinian algebra, it follows that
   $$ a(\mathcal{F}(M)/U \mathcal{F}(M)) = \max \{ k \in \mathbb{Z} : [\mathcal{F}(M)/U \mathcal{F}(M)]_k \neq 0\}$$
(see, e.g., \cite[p.~143]{BHCMbook}). On the other hand, using \cite[Lemma 2]{Nagel1990} and \cite[Lemma 2.1]{Nagel2005} as in the proof of \cite[Proposition 6.6]{CNPY}, we get that
$$ \reg(\mathcal{F}(M)) = \reg(\mathcal{F}(M)/U \mathcal{F}(M)) = \max \{ k \in \mathbb{Z} : [\mathcal{F}(M)/U \mathcal{F}(M)]_k \neq 0\}. $$
Since $U$ is a minimal reduction of $M$, it follows that $[\mathcal{F}(M)/U \mathcal{F}(M)]_k=0$ if and only if $k > r_U(M)=r(M)$. 
Hence, $a(\mathcal{F}(M)/U \mathcal{F}(M))=\reg(\mathcal{F}(M)) = r(M)$ and so the result follows. 
\end{proof}

\subsection{Ladder determinantal modules and their blowup algebras}
\label{subsection:blowup_algebras}

In this subsection we discuss ladder determinantal modules. Using standard notation, if $p$ and $q$ are two positive integers such that $p\le q$, we denote the set $\{p,p+1,\dots,q-1,q\}$ by the interval $[p,q]$. When $p=1$, we denote the interval $[1,q]$ by $[q]$.

\begin{Definition}
    \label{def:Ladder}
   Let $m,n \in \NN$ with $n\le m$. Write $[m]=S_1 \cup S_2 \cup \cdots \cup S_n$, where for each $i\in [n]$, $S_i\coloneqq [u_i, v_i]$ for some $u_i, v_i\in \NN$ with $u_i \le v_i$.  Without loss of generality, we may assume that $1=u_1\le  u_2 \le  \cdots \le  u_n\le m$ and $1\le v_1\le  v_2 \le \cdots \le v_n=m$.
     For each $i\in [n]$ and each $j\in S_i$, let $x_{i,j}$ be an indeterminate over the field $\KK$. Denote $\bfS \coloneq \{S_1, \ldots, S_n\}$.
    \begin{enumerate}[a]
    \item    Let  $\bfX_{\bfS}$ be the $n\times m$ matrix whose $(i,j)$-th entry is $x_{i,j}$ when $j\in S_i$ and $0$ otherwise.  We say that $\bfX_{\bfS}$ is the \emph{ladder matrix} associated to $\bfS$. 
    \item Let $R\coloneq\KK[\bfX_{\bfS}]=\KK[x_{i,j}: i\in[n], j\in S_i]$ be a polynomial ring over $\KK$ and $L\coloneq I_n(\bfX_{\bfS})$ be the $R$-ideal generated by the $n\times n$ (maximal) minors of $\bfX_{\bfS}$. 
     \item 
     For any tuple $(c_1,\ldots, c_{n}) \in S_1\times \cdots \times S_n$ with $c_i <c_{i+1}$ for all $i\in[n-1]$, let $\det[\bdc]$ be the determinant of the $n\times n$ submatrix of $\bfX_{\bfS}$ whose columns are $c_1, \ldots, c_n$. Let  $\calL\coloneqq \{\bdc=(c_1,\ldots, c_{n}) \in S_1\times \cdots \times S_n : c_i<c_{i+1} \mbox{ for all } i\in [n-1]\}$ and notice that 
      by the ladder shape of $\bdX_{\bfS}$ we have  $\det[\bdc]\neq 0$ for any $\bdc\in \calL$. Therefore, $\{ \det[\bdc] : \bdc\in \calL\}$ is a minimal generating set of the ideal $L$.

      \item Let $r\in \NN$. Then the module  $M=\bigoplus_{i=1}^rL$ is called a \emph{ladder determinantal module} associated to the matrix $\bfX_{\bfS}$.
\end{enumerate}
\end{Definition}

We note here that the ideals we consider are ideals of maximal minors, which are not necessarily the same as what is often referred to as \emph{ladder determinantal} ideals. Ladder determinantal ideals are ideals generated by the $t\times t$ minors that fit within the ladder matrix for some fixed $t$, see for example \cite{ConcaLadder}. The following example illustrates the definition above. We will revisit this example often in the rest of the paper. 

\begin{Example}\label{LadderMatrix}
    The $3\times 9$ matrix
    $$\bfX_{\bfS}=\left[\begin{array}{ccccccccc}
                x_{1,1}  & x_{1,2} &x_{1,3} & x_{1,4} &x_{1,5} & 0 &0 &0 &0\\
                0 & 0  &x_{2,3} &x_{2,4} &x_{2,5} &x_{2,6} & 0 &0 &0\\
                0  &0  &0 &x_{3,4} &x_{3,5} &x_{3,6}&x_{3,7}&x_{3,8} &x_{3,9}
        \end{array}\right]$$
    is a ladder matrix associated to the decomposition $[1,9]= S_1 \cup S_2 \cup S_3$, 
    where $S_1=[1,5]$, $S_2=[3,6]$, $S_3=[4,9]$. In this case, $\bfS = \{S_1,  S_2,  S_3\}$ and $L=I_3( \bfX_{\bfS})$. 
\end{Example}

Next we discuss the special fiber ring of ladder determinantal modules.

\begin{Discussion}
\label{discussionFiber}
Adopt the notation of \Cref{def:Ladder}. Since $L$ is a \emph{sparse} determinantal ideal in the sense of \cite{Giusti-Merle}, it follows from a result of Boocher \cite[Proposition 5.4]{Boocher} that the  nonzero maximal minors of $\bfX_{\bfS}$ form a universal Gr\"{o}bner basis for $L$. With this in mind, in light of \textsc{Sagbi} basis degeneration theory, in order to study the algebra $\calF(M)$ it is useful to consider the special fiber $\calF(N)$ of the module $N\coloneq\bigoplus_{i=1}^r\ini_{\tau} (L)$, where $\tau$ is a fixed monomial order on $R= \KK[\bfX_{\bfS}]$. 
    
    Since the set $\{\det[\bdc] : \bdc\in \calL\}$ is a reduced Gr\"{o}bner basis for $L$, it follows that  $\calF(N) \cong \KK[\ini_{\tau}(\det [\bdc])t_i :  \bdc \in \calL, i\in [r]]$.
 Then, there exists a surjective homomorphism from the polynomial ring $T \coloneqq \KK[\bfT_{\calL\times[r]}]=\KK[T_{\bdc,i} : \bdc\in \calL, \, i\in [r]]$ to the special fiber ring $\calF(N)$, namely
$$ \rho: T \longrightarrow \mathcal{F}(N) $$
given by $\rho(T_{\bdc,i})=\ini_{\tau}(\det[\bdc])t_i$ for all $\bdc\in \calL$ and all $i\in [r]$. Let  $\calJ \coloneqq\ker(\rho)$ be the presentation ideal of $\mathcal{F}(N)$. 
We also consider the map $\psi : T \longrightarrow \mathcal{F}(M)$ given by $\psi(T_{\bdc,i})= (\det[\bdc])t_i$, for all $\bdc \in \calL$ and all $i\in [r]$;
then $\calK \coloneq \ker (\psi)$ is the presentation ideal of $\calF(M)$.

Following the work of Lin and Shen \cite{LinShenLadder}, one can select monomial orders $\tau$ and $\sigma$ on $R$ and $T$ respectively, so that via \textsc{Sagbi} basis degeneration theory one can relate and identify the defining equations of $\calF(N)$ and $\calF(M)$; see \Cref{FiberLadder} and \Cref{thm:sagbiLifts}.
\end{Discussion}

\begin{Definition}\label{order}
   Adopt the notation of \Cref{def:Ladder}. 
    \begin{enumerate}[a]
        \item Let the entries in $\bfX_{\bfS}$ be ordered so that $x_{i,j}>x_{l,k}$ if and only if $i<l$ or $i=l$ and $j<k$ and let $\tau$ be the lexicographic order on the ring $R$ with respect to this total order.
        \item By abuse of notation, let $\tau$ also denote the lexicographic order on $\calL \times [r]$.  That is, for vectors $\bda,\bdb \in \calL \times [r]$, we say that $\bda>_{\tau}\bdb$ if the first nonzero entry of $\bda-\bdb$ is negative.  
    \end{enumerate}    
\end{Definition}

\begin{Discussion} \label{dis: join-meet}
  Adopt the notation of \Cref{def:Ladder}. There is a natural partial order on $\calL \times [r]$, namely, for $\bda= (a_1, \ldots, a_{n+1}), \bdb = (b_1, \ldots, b_{n+1}) \in \calL \times [r]$, set $\bda < \bdb$ if $a_i\le b_i$ for all $i$. With this order, any two elements $\bda, \bdb\in \calL \times [r]$ admit a join and a meet in $\calL \times [r]$. Indeed, $\bda \vee \bdb = \max\{\bda,\bdb\} = (c_1, \ldots, c_{n+1})$ and $\bda \wedge \bdb = \min\{\bda,\bdb\} = (d_1, \ldots, d_{n+1})$, where for each $i \in [n+1]$, $c_i\coloneq\max\{a_i,b_i\}$ and $d_i\coloneq\min \{a_i,b_i\}$. Notice that the ladder structure of $\bfX_{\bfS}$ implies that $\bda \vee \bdb, \bda \wedge \bdb \in \calL \times [r]$; see also \cite[Lemma 4.4]{LinShenLadder}. Moreover, with this partial order, $\calL \times [r]$ is a distributive lattice; see \cite[Chapter~6]{HerHiOh} for more information on distributive lattices.
  
The \emph{join-meet ideal} of $\calL \times [r]$ is the ideal $I_{\calL \times [r]} \coloneq (T_{\bda}T_{\bdb} - T_{\bda \wedge \bdb} T_{\bda \vee \bdb} \mid \bda, \bdb \in \calL \times [r])$ in $T=\KK[T_{\bda}: \bda\in \calL \times [r]]$. 
Note that $T_{\bda}T_{\bdb} - T_{\bda \wedge \bdb} T_{\bda \vee \bdb} \neq 0$ if and only if $\bda, \bdb$ are incomparable, i.e., if and only if $\bda, \bdb \neq \min \{\bda, \bdb\}$. 

We say that a monomial order $\sigma$ on $T$ is \emph{compatible} if for any incomparable $\bda, \bdb\in \calL\times [r]$, one has $\ini_{\sigma}(T_{\bda}T_{\bdb} - T_{\bda \wedge \bdb} T_{\bda \vee \bdb})=T_{\bda}T_{\bdb}$. An example of such a compatible monomial order can be constructed as follows. 
With $\tau$ as in \Cref{order}, order the variables of $T$ so that $T_{\bda} > T_{\bdb}$ if and only if $\bda >_{\tau} \bdb$, and choose $\sigma$ to be the graded reverse lexicographic order on $T$ induced by this order on the indeterminates; see \cite[Example~6.16]{HerHiOh}.
\end{Discussion}

Using the notions above, Lin and Shen are able to determine the defining equations of the special fiber ring $\calF(N)$.

\begin{Theorem}[{\cite[Theorem~4.8, Corollary~4.10]{LinShenLadder}}] \label{FiberLadder}
    Adopt the setting of \Cref{dis: join-meet} and let $N=\bigoplus_{i=1}^r\ini_{\tau}(L)$ be as in \Cref{discussionFiber}. Then the nonzero generators of $I_{\calL\times [r]}$ form a Gr\"{o}bner basis of the presentation ideal of the special fiber ring $\calF (N)$ 
    with respect to any compatible monomial order  $\sigma$.
    In particular, the $\KK$-algebra $\calF(N)$ is isomorphic to $T/\calJ$ with 
    $$ \quad \calJ=(T_{\bda}T_{\bdb}-T_{\min\{\bda, \bdb\}}T_{\max\{\bda, \bdb\}}: \bda, \bdb \in \calL\times[r]),$$
    hence it is a Hibi ring and a Koszul, Cohen-Macaulay normal domain. 
\end{Theorem}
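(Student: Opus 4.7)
The plan is to identify $\calF(N)$ with the classical Hibi ring of the finite distributive lattice $\calL \times [r]$, and then invoke Hibi's structure theorem. First I would compute the initial monomial $\ini_\tau(\det[\bdc])$ explicitly. With the term order $\tau$ of \Cref{order}, which prefers variables with smaller row index and, within a row, smaller column index, the leading term of the Laplace expansion of the minor indexed by columns $c_1 < \cdots < c_n$ is the main-diagonal product
\[
\ini_\tau(\det[\bdc]) \;=\; x_{1,c_1}x_{2,c_2}\cdots x_{n,c_n}.
\]
These monomials are pairwise distinct as $\bdc$ ranges over $\calL$, so the map $\rho\colon T\to\calF(N)$ of \Cref{discussionFiber} realizes $\calF(N)$ as the toric $\KK$-subalgebra of $R[t_1,\ldots,t_r]$ generated by $m_{\bda}\coloneqq x_{1,c_1}\cdots x_{n,c_n}\,t_i$ for $\bda=(\bdc,i)\in\calL\times[r]$.

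Next I would verify the inclusion $I_{\calL\times[r]}\subseteq\calJ=\ker(\rho)$. For incomparable $\bda=(\bdc,i)$ and $\bdb=(\bdc',i')$ in $\calL\times [r]$, on each row $k\in[n]$ the multisets $\{c_k,c'_k\}$ and $\{\min(c_k,c'_k),\max(c_k,c'_k)\}$ coincide, and similarly for the $t$-variables. Hence $m_{\bda}m_{\bdb}=m_{\bda\wedge\bdb}m_{\bda\vee\bdb}$, which shows that $T_{\bda}T_{\bdb}-T_{\bda\wedge\bdb}T_{\bda\vee\bdb}\in\calJ$.

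The heart of the argument, and the main obstacle, is to promote this inclusion to equality and to establish the Gr\"{o}bner basis statement for an arbitrary compatible order $\sigma$. For this I would invoke the classical structure theorem of Hibi \cite{HibiDistLatt} on algebras associated to finite distributive lattices (see also \cite[Chapter 6]{HerHiOh}). Since $\calL\times[r]$ is a finite distributive lattice by \Cref{dis: join-meet} and $\rho$ realizes the canonical toric presentation of its Hibi ring, Hibi's theorem simultaneously yields $\ker(\rho)=I_{\calL\times[r]}$ and that the nonzero join-meet binomials form a Gr\"{o}bner basis whose initial ideal is generated by the squarefree quadratic monomials $T_{\bda}T_{\bdb}$ over all incomparable pairs. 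The compatibility condition built into \Cref{dis: join-meet} is designed precisely to ensure that $\sigma$ extracts $T_{\bda}T_{\bdb}$ as the leading term of each such binomial, which is what allows Hibi's conclusion to apply uniformly to every compatible $\sigma$.

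The resulting identification $\calF(N)\cong T/I_{\calL\times[r]}$ then exhibits $\calF(N)$ as a Hibi ring, and the concluding properties are standard: normality and Cohen--Macaulayness are part of Hibi's original theorem via the order polytope construction, while Koszulness follows from the existence of the quadratic squarefree Gr\"{o}bner basis for $\calJ$, which makes $T/\calJ$ G-quadratic, hence Koszul.
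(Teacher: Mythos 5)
First, a point of reference: the paper does not actually prove \Cref{FiberLadder} --- it is quoted from Lin and Shen \cite[Theorem~4.8, Corollary~4.10]{LinShenLadder}, so your proposal is measured against that source rather than against an argument in this paper. Your overall route (identify $\calF(N)$ with the Hibi ring of the distributive lattice $\calL\times[r]$ and quote Hibi's structure theorem) is the natural one, and most of your steps check out: the diagonal initial term $\ini_\tau(\det[\bdc])=x_{1,c_1}\cdots x_{n,c_n}$ is correct for the order of \Cref{order} (note the diagonal entries are nonzero precisely because $c_i\in S_i$ for $\bdc\in\calL$, and the paper itself uses this computation in the proof of \Cref{analyticSpread}), and your coordinatewise min/max verification that the join-meet binomials lie in $\ker(\rho)$ is correct, using that $\calL\times[r]$ is closed under $\wedge$ and $\vee$ as recorded in \Cref{dis: join-meet}.

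The genuine gap is the sentence asserting that $\rho$ ``realizes the canonical toric presentation of its Hibi ring.'' That claim is the substance of the theorem, and Hibi's theorem does not hand it to you: a parametrization of a distributive lattice by distinct monomials that satisfies all the join-meet relations can still have kernel strictly larger than the join-meet ideal. For instance, take $D$ the three-element chain $a<b<c$, so $I_D=(0)$, and assign the distinct equigenerated monomials $x^2,\,xy,\,y^2$; every join-meet relation holds vacuously, yet the toric kernel contains $T_aT_c-T_b^2\neq 0$. What closes the gap is a short lemma you never state: distinct multichains $\bda^1\le\cdots\le\bda^k$ in $\calL\times[r]$ have distinct products $m_{\bda^1}\cdots m_{\bda^k}$, because along a multichain each of the $n+1$ coordinates is weakly increasing, so the product records the multiset of each coordinate and sorting recovers the multichain. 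Hence the standard (multichain) monomials map to linearly independent elements of $R[t_1,\ldots,t_r]$; since the join-meet binomials rewrite every monomial of $T$ into a standard one modulo $I_{\calL\times[r]}$, this forces $\ker(\rho)=I_{\calL\times[r]}$, and for any compatible $\sigma$ a dimension count in each degree forces $\ini_\sigma(\calJ)$ to be exactly the ideal of incomparable products (otherwise too few standard monomials would survive to span $T/\calJ$), which is the Gr\"{o}bner claim for \emph{every} compatible order --- a statement that is not in Hibi's original paper \cite{HibiDistLatt} but is the standard argument of \cite[Chapter~6]{HerHiOh} and is what Lin and Shen prove directly. With that lemma inserted, your remaining deductions (normality and Cohen--Macaulayness from Hibi, Koszulness from the quadratic squarefree Gr\"{o}bner basis) are fine.
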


For more information on Hibi rings, see also \cite[Chapter 6]{EHgbBook} and \cite[Chapter 6]{HerHiOh}. 
Moreover, notice that for any compatible monomial order $\sigma$, we have $\ini_{\sigma}(\calJ)=(T_{\bda}T_{\bdb}: \bda, \bdb\neq \min\{\bda, \bdb\})$.  
The monomials $\{T_{\min\{\bda, \bdb\}}T_{\max\{\bda, \bdb\}}: \bda, \bdb\neq \min\{\bda, \bdb\} \}$ are sometimes referred to as the \emph{standard monomials} of $\calJ$ with respect to  $\sigma$, see \cite{Sturmfels}.

Given the well understood structure of the Hibi ring $\calF(N)$, Lin and Shen use the theory of Sagbi bases to identify the defining equations of the special fiber ring $\calF(M)$ of the ladder determinantal modules we are interested in.  

\begin{Theorem}[{\cite[Theorem 5.5, Corollary 5.7]{LinShenLadder}}]
    \label{thm:sagbiLifts}  
    Adopt the same setting as in \Cref{FiberLadder} and let  $M=\bigoplus_{i=1}^{r}L $ be as in \Cref{def:Ladder}. Let $\calK$ denote the defining ideal of $\calF(M)$ as in \Cref{discussionFiber}. Then, the set $
    \{\det[\bdc] t_i : (\bdc, i) \in \calL\times [r]\}$ is a \textsc{Sagbi} basis for $\calF(M)$ with respect to an extended monomial order $\tau'$, that is $\ini_{\tau'}(\calF(M))=\calF(N)$. Moreover,  there exists a monomial order $\omega$ on $T$ such that $\ini_{\omega}( \calK)=\ini_{\sigma} (\calJ)$. Finally, $\calF(M)$ is a Cohen--Macaulay normal domain.
\end{Theorem}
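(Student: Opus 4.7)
The plan is to prove the three assertions in order: first the Sagbi basis property, then the transfer of Gröbner structure, and finally the ring-theoretic consequences. I would begin by extending $\tau$ to a monomial order $\tau'$ on $R[t_1,\ldots,t_r]$ via a block construction in which the $t_i$'s carry weight zero, with ties broken by any fixed order on $t_1,\ldots,t_r$; then $\ini_{\tau'}(\det[\bdc]\,t_i)=\ini_{\tau}(\det[\bdc])\,t_i$ for every $(\bdc,i)\in\calL\times[r]$, which yields the containment $\calF(N)\subseteq \ini_{\tau'}(\calF(M))$ immediately from the description of generators of $\calF(N)$ in \Cref{discussionFiber}.

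For the reverse containment, I would apply the Sagbi lifting criterion (see \cite{CHV96}): it suffices to show that for every binomial generator $T_{\bda} T_{\bdb} - T_{\bda \wedge \bdb} T_{\bda \vee \bdb}$ of $\calJ$ coming from incomparable $\bda=(\bdc,i),\bdb=(\bdd,j)\in\calL\times[r]$ (see \Cref{FiberLadder}), the corresponding binomial
\[
(\det[\bdc]\,t_i)(\det[\bdd]\,t_j)-(\det[\bdc\wedge\bdd]\,t_{\min\{i,j\}})(\det[\bdc\vee\bdd]\,t_{\max\{i,j\}})
\]
subduces to zero along the generating set. Since $t_it_j=t_{\min\{i,j\}}t_{\max\{i,j\}}$, this reduces to the $t$-free straightening identity
\[
\det[\bdc]\det[\bdd]-\det[\bdc\wedge\bdd]\det[\bdc\vee\bdd]=\sum_k \lambda_k \det[\bde_k]\det[\bdf_k],
\]
where every pair $(\bde_k,\bdf_k)$ lies in $\calL\times\calL$ and each right-hand term has strictly smaller $\tau$-initial monomial than the left-hand side. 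Such Plücker-type straightening laws for maximal minors of a ladder matrix are available from Conca's analysis in \cite{ConcaLadder}, while the fact that the nonzero maximal minors form a universal Gröbner basis for $L$ by \cite[Proposition~5.4]{Boocher} guarantees the necessary initial-monomial estimate. The main obstacle is verifying that the indices $\bde_k,\bdf_k$ remain in $\calL$ after straightening, which is precisely where the ladder shape of $\bfX_{\bfS}$ is essential.

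Once the Sagbi basis property is established, I would define a weight vector $w$ on $T$ by setting $w(T_{\bdc,i})$ equal to the exponent vector of $\ini_{\tau'}(\det[\bdc]\,t_i)$ and let $\omega$ be the monomial order on $T$ refining $w$ with $\sigma$ as tiebreaker. The Sagbi property then gives $\ini_w(\calK)=\calJ$, and the standard composition formula for weight orders yields $\ini_\omega(\calK)=\ini_\sigma(\ini_w(\calK))=\ini_\sigma(\calJ)$, which is squarefree by \Cref{FiberLadder}. The ring-theoretic conclusions now follow: $\calF(M)$ is a domain, being a subalgebra of $R[t_1,\ldots,t_r]$; Cohen-Macaulayness is inherited from $T/\ini_\omega(\calK)\cong T/\ini_\sigma(\calJ)$, which is Cohen-Macaulay because $\calF(N)$ is, via \Cref{CVGB}(b); and normality passes from the normal Hibi ring $\calF(N)$ to $\calF(M)$ along the flat Sagbi degeneration, using the standard principle that a graded $\KK$-algebra whose initial algebra is a normal affine semigroup ring is itself normal (see \cite{CHV96}).
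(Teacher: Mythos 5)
The paper offers no proof of this statement at all --- it is imported verbatim from \cite[Theorem 5.5, Corollary 5.7]{LinShenLadder} --- so the only meaningful comparison is with the strategy of that source, and your outline reconstructs it essentially faithfully: a block extension $\tau'$ with the $t_i$'s subordinate, the \textsc{Sagbi} lifting criterion applied to the Hibi binomials of \Cref{FiberLadder} via a straightening law for products of maximal ladder minors, the passage $\ini_{\omega}(\calK)=\ini_{\sigma}(\ini_{w}(\calK))=\ini_{\sigma}(\calJ)$ through the weight induced by initial exponents with $\sigma$ as tiebreaker, and the transfer of the Cohen--Macaulay and normality properties along the two degenerations via \Cref{CVGB} and \cite{CHV96}, exactly as this paper itself later exploits in \Cref{SAGBIandGB}. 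One imprecision is worth correcting: \cite[Proposition~5.4]{Boocher} concerns a universal Gr\"obner basis for the ideal $L$ itself and does not supply the initial-monomial estimate you invoke; that estimate is instead the elementary identity $\ini_\tau(\det[\bdc])\,\ini_\tau(\det[\bdd])=\ini_\tau(\det[\bdc\wedge\bdd])\,\ini_\tau(\det[\bdc\vee\bdd])$ combined with standard-monomial theory, and the ladder-closedness you rightly flag as the crux is most cleanly settled not by citation-waving but by specializing the generic Pl\"ucker straightening relation under the homomorphism killing the variables outside the ladder: every maximal minor whose column tuple lies outside $\calL$ vanishes identically on $\bfX_{\bfS}$ (its diagonal meets a zero entry), while $\bdc\wedge\bdd,\bdc\vee\bdd\in\calL$ by \cite[Lemma 4.4]{LinShenLadder}, so only $\calL$-indexed terms survive and the subduction terminates.
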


As a consequence of \Cref{thm:sagbiLifts} the study of various algebraic invariants of $\calF(M)$ is reduced to the study of the same invariants for $\calF(N)$, and in fact in terms of $\ini_{\sigma} (\calJ)$.
Notice that by \Cref{FiberLadder}, $\ini_{\sigma} (\calJ)$ is generated by squarefree monomials of degree 2. We write $(\ini_{\sigma}(\calJ))^{\vee}$ for the Alexander dual ideal of $\ini_{\sigma}(\calJ)$.

\begin{Corollary}\label{SAGBIandGB}
Adopt the same settings as in \Cref{thm:sagbiLifts}. 
\begin{enumerate}[a]
    \item \label{analytic} $\ell(M) =\ell(N) =\dim(T/\ini_{\sigma}(\calJ))$.
    \item \label{regToPd} $\reg(\calF(M))= \reg(\calF(N))=\reg(T/\ini_{\sigma}(\calJ))=\pd((\ini_{\sigma}(\calJ))^{\vee})$.
    \item \label{ainvariant} $a(\calF(M))= a(\calF(N))= a(T/\ini_{\sigma}(\calJ))$.
    \item \label{multiplicity eq} $e(\calF(M))=e(\calF(N))=e(T/\ini_{\sigma}(\calJ))$.
\end{enumerate}
\end{Corollary}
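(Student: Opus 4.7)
The plan is to string together the two degenerations already provided by \Cref{thm:sagbiLifts}: \textsc{Sagbi} degeneration relates $\calF(M)$ to $\calF(N)$, and Gr\"obner degeneration relates both $\calF(N)$ and $\calF(M)$ to $T/\ini_{\sigma}(\calJ)$. Terai's formula handles the last equality in (b).

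First I would establish the equalities between $\calF(M)$ and $\calF(N)$. By \Cref{thm:sagbiLifts}, the set $\{\det[\bdc]\,t_i : (\bdc,i) \in \calL\times [r]\}$ is a finite \textsc{Sagbi} basis of $\calF(M)$, with all basis elements of degree one in the $T$-grading, and $\ini_{\tau'}(\calF(M)) = \calF(N)$. The equigenerated \textsc{Sagbi} basis hypothesis implies that $\calF(M)$ and $\calF(N)$ share the same Hilbert function, which immediately gives the dimension equality in (a). \Cref{degeneration} then provides the corresponding statements for multiplicity and $a$-invariant in (c) and (d). For the regularity in (b), \Cref{FiberLadder} and \Cref{thm:sagbiLifts} both algebras are Cohen--Macaulay, so $\reg$ can be read off the Hilbert series, hence is preserved under the \textsc{Sagbi} degeneration.

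Next I would handle the Gr\"obner degeneration step down to $T/\ini_{\sigma}(\calJ)$. By \Cref{FiberLadder} the ideal $\ini_{\sigma}(\calJ)$ is generated by squarefree monomials of degree two, and by \Cref{thm:sagbiLifts} we have $\ini_{\omega}(\calK) = \ini_{\sigma}(\calJ)$, which is therefore also squarefree. Applying \Cref{CVGB}(a) with the pair $(\calJ,\sigma)$ and separately with the pair $(\calK,\omega)$ gives $\reg(T/\calJ) = \reg(T/\ini_{\sigma}(\calJ)) = \reg(T/\calK)$, completing (b) up to the last equality. The remaining equalities in (a), (c), and (d) for this step follow from the standard fact that passing from a homogeneous ideal to any of its initial ideals preserves the Hilbert series, hence dimension, multiplicity, and $a$-invariant.

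The last equality in (b) is Terai's formula applied to the squarefree monomial ideal $\ini_{\sigma}(\calJ)$: for any squarefree monomial ideal $J$, one has $\reg(T/J) = \pd(J^{\vee})$. Overall, there is no real obstacle, as the result is a bookkeeping consequence of the ingredients assembled in \Cref{thm:sagbiLifts} together with \Cref{degeneration}, \Cref{CVGB}, and Terai's formula. The only items requiring care are verifying that the hypotheses of \Cref{degeneration} are satisfied (the \textsc{Sagbi} basis is equigenerated as stated) and keeping straight the two different monomial orders $\sigma$ and $\omega$ on $T$ in the Gr\"obner degeneration step.
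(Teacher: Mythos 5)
Your proposal is correct and takes essentially the same route as the paper's proof: both exploit the double degeneration from \Cref{thm:sagbiLifts} (in particular $\ini_{\omega}(\calK)=\ini_{\sigma}(\calJ)$), invoke \Cref{CVGB} for the regularity, use Hilbert-series invariance to transfer dimension, $a$-invariant, and multiplicity (via \Cref{degeneration} on the \textsc{Sagbi} side), and conclude with Terai's duality, which the paper cites as \cite[Proposition~8.1.10]{HHMon}. The only cosmetic difference is that the paper derives statement (c) from (a), (b), and \Cref{CosLS:2.18} rather than directly from \Cref{degeneration} and Hilbert-series invariance, which changes nothing of substance.
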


\begin{proof}
    By \Cref{thm:sagbiLifts}, $\calF(M)$ is Cohen-Macaulay and there exists a monomial order $\omega$ such that $\ini_{\omega}(\calK) = \ini_{\sigma}(\calJ)$. Moreover, by \Cref{FiberLadder} we have $\calF(N)$ is Cohen-Macaulay. Thus by \cite[Corollary 2.11]{CVGBDeform} (see \Cref{CVGB}) we have
    $$ \ell(M) = \dim (\calF(M))=\dim( T/\ini_{\omega}(\calK))=\dim (T/\ini_{\sigma}(\calJ))=\dim(\calF(N)) = \ell(N). $$
    Hence, by \cite[Corollary 2.7]{CVGBDeform} (see \Cref{CVGB}), we obtain the following equalities
$$ \reg (\calF(M))= \reg(T/\ini_{\omega}(\calK)) = \reg(T/\ini_{\sigma}(\calJ)) =\reg(\calF(N)). $$
   Then by \cite[Proposition 8.1.10]{HHMon}, $\,\reg (\calF(M))= \reg(T/\ini_{\sigma}(\calJ))=\pd((\ini_{\sigma}(\calJ))^\vee)$.
This proves statements~\ref{analytic} and \ref{regToPd}, which together with \Cref{CosLS:2.18} imply statement~\ref{ainvariant}.  

Now, note that by \Cref{degeneration} and \Cref{thm:sagbiLifts}, it follows that $e(\calF(M))=e(\calF(N))$. Moreover, $e(\calF(N))$ can be calculated from the Hilbert series of $T/\calJ$ since $\calF(N)\cong T/\calJ$. Meanwhile, the Hilbert series of $T/\calJ$ and $T/\ini_{\sigma}(\calJ)$ coincide.
Therefore, we have $e(\calF(M))=e(\calF(N))=e(T/\calJ)=e(T/\ini_{\sigma}(\calJ))$. This proves statement~\ref{multiplicity eq}.
\end{proof}

Notice that, since $\calF(N)$ is a Hibi ring, formulas for $\ell(N)$, $\reg(\calF(N))$ and $a(\calF(N))$ can be given in terms of combinatorial data of the poset of the join-irreducible elements of the lattice $\calL \times [r]$; see \cite[Theorems~6.38 and~6.42]{HerHiOh}. Moreover, by \cite[Theorem 3.9]{BLHibi}, the multiplicity $e(\calF(N))$ is the number of maximal chains in  $\calL \times [r]$, however no explicit formulas are known to calculate this number for arbitrary Hibi rings. 
By shifting the attention on the ring $T/\ini_{\sigma}(\calJ)$ by means of \Cref{SAGBIandGB}, we will instead express each of these invariants in terms of numerical data of the matrix $\bfX_{\bfS}$ (see \Cref{analyticSpread}, \Cref{reg}, \Cref{AinvariandReduction}, and \Cref{thm:multiplicity}). We will then use our findings to reinterpret the previously known results without needing to investigate the combinatorial structure of the poset of the join-irreducible elements of $\calL \times [r]$; see \Cref{cor: regposet}.

\section{The linear quotient property}\label{sec:linear}

The objective of this section is to establish that the Alexander dual ideal $(\ini_{\sigma}(\calJ))^\vee$, with $\calJ$ is as in \Cref{FiberLadder}, is equigenerated and has linear quotients. This will allow us to apply \Cref{MaxLenghtToPd} to calculate the regularity of $\calF(M)$ in \Cref{sec:reg}. In light of \Cref{SAGBIandGB}, our ladder matrix can be simplified as discussed in the following remark. 

\begin{Remark}\label{shrinkmatrix}
For all $i\in [2,n]$, let $S_i'=S_i$ if $u_{i-1}<u_{i}$ and $S_i'=[u_{i}+1, v_i]$ whenever $u_{i-1}=u_i$. Let $\bfS'=\{S_1, S_2' \ldots, S_{n}'\}
$ and let ${\bfX}_{\bfS'}$ be the ladder matrix corresponding to $\bfS'$. Notice $\ini_{\tau}(I_n(\bfX_{\bfS}))=\ini_{\tau}(I_n(\bfX_{\bfS'}))$.
Furthermore, since $\tau$ is a diagonal monomial order we may assume that $u_{i}\le v_{i-1}+1$ for all $i\in[2,n]$. Therefore, without loss of generality, in \Cref{def:Ladder} we may assume that $u_{i-1}<u_{i}$ and similarly $v_{i-1}<v_{i}$ for all $i\in [2,n]$.
\end{Remark}

With this, the following setting will be adopted for the rest of this work.

\begin{Setting} \label{LadderSetting} Let $m,n,r\in \NN$ with $n\le m$ and let $\KK$ be a field of characteristic $0$. 
\begin{enumerate}[a]
   \item Let $\bfS = \{S_1, \ldots, S_n\}$ be such that $[m] = S_1 \cup \cdots \cup S_n$, where $S_i = [u_i, v_i]$ for all $i\in [n]$. Further assume that for all $i\in [2,n]$ we have $u_{i-1} < u_{i}$, $v_{i-1}<v_{i}$, and $u_i\le v_{i-1}+1$.
Let $\bfX_{\bfS}$ be the ladder matrix associated to $\bfS$, $R=\KK[x_{i,j}: i\in [n], j\in S_i]$, and $L=I_n(\bfX_{\bfS})$ be the $R$-ideal of $n\times n$ minors of $\bfX_{\bfS}$.
   \item  Let $\Delta_i\coloneq v_i-u_i$ for $i \in [n]$ and $\epsilon_j\coloneq u_{j+1}-u_j$ for $j \in [n-1]$. Notice that $\Delta_i \geq \epsilon_i-1$ for all $ i\in [n-1]$. Moreover, for all $i\in [2,n]$ we have $\Delta_{i-1}\le \Delta_i+\epsilon_{i-1}-1$.
   \item Let $M=\bigoplus_{i=1}^r L$ be the direct sum of $r$ copies of $L$ and $N=\bigoplus_{i=1}^r \ini_{\tau} (L)$ be the direct sum of $r$ copies of $\ini _{\tau}(L)$, where $\tau$ is the lexicographic order on $R$ as in \Cref{order}.
   \item Let  $\calL= \{\bdc=(c_1,\ldots, c_{n}) \in S_1\times \cdots \times S_n : c_i<c_{i+1} \mbox{ for all } i\in [n-1]\}$. In other words, $\calL$ is the set of lists of $n$ column indices that give rise to a nonzero maximal minor of $\bfX_{\bfS}$. 
   \item Let $T = \KK[\bfT_{\calL\times[r]}]=\KK[T_\bda: \bda\in \calL\times [r]]$ and recall that we may write $\mathcal{F}(M)\cong T/\calK$ and $\mathcal{F}(N)\cong T/\calJ$, where $\calJ=(T_{\bda}T_{\bdb}-T_{\min\{\bda, \bdb\}}T_{\max\{\bda, \bdb\}}: \bda, \bdb \in \calL\times[r])$; see \Cref{FiberLadder}. 
   \item Let $\sigma$ be the graded reverse lexicographic order on $T$ induced by $\tau$ as in \Cref{dis: join-meet}.
    \item Let $\calG^r$ be the simple graph on the vertex set $\calL \times [r]$ such that  $\{\bda,\bdb\}$ is an edge in $\calG^r$ whenever $\bda=\min\{\bda,\bdb\}$ or $\bdb=\min\{\bda, \bdb\}$. Whenever $r$ is clearly understood, we simply write $\calG$.
\end{enumerate}
\end{Setting}

By \Cref{FiberLadder}, $\ini_{\sigma}(\calJ)$ is generated by the monomials $T_{\bda}T_{\bdb}$ such that $\bda, \bdb \neq \min\{\bda, \bdb\}$. Therefore, the squarefree monomial ideal $\ini_{\sigma}(\calJ)$ is the edge ideal of the complement graph ${\calG}^{\complement}$. In order to describe the generators of $(\ini_{\sigma}(\calJ))^{\vee}$, we need to recall the notions of maximal cliques and minimal vertex covers of a graph.

Let $G$ be graph with vertex set $V(G)$ and edge set $E(G)$. A \emph{clique} of $G$ is a collection of vertices such that any two vertices in the collection are adjacent. A \emph{maximal clique} is a clique that is maximal with respect to inclusion. Let $\MC(G)$ be the set of maximal cliques of $G$. A \emph{vertex cover} of $G$ is a subset $C\subseteq V(G)$ such that $\{i,j\}\cap C \neq \emptyset$ for all $\{i,j\} \in E(G)$. A vertex cover $C$ is called \emph{minimal} if no proper subset of $C$ is a vertex cover of $G$.

\begin{Remark}
    \label{rmk:maximal_cliques} 
    Adopt \Cref{LadderSetting}. Since $\ini_{\sigma}(\calJ)$ is the edge ideal of the complement graph $\calG^{\complement}$, then  $T_{\bda^1}T_{\bda^2}\cdots T_{\bda^{p}}$ is a minimal monomial generator of $(\ini_{\sigma}(\calJ))^{\vee}$ if and only if $\{\bda^1, \dots, \bda^p\}$ is a minimal vertex cover of the complement graph $\calG^{\complement}$; see \cite[Corollary 9.1.5]{HHMon}. 
    Equivalently, the set complement $\{\bda^1, \dots, \bda^{p}\}^{\complement}= (\calL \times [r])\setminus \{\bda^1, \dots, \bda^{p}\}$ is a maximal clique of $\calG$. Therefore, the minimal monomial generators of the Alexander dual ideal are in one-to-one correspondence with the maximal cliques of $\calG$.
\end{Remark}

Using the fact that $\calF(M)$ is Cohen--Macaulay, see \Cref{thm:sagbiLifts}, we can now show that $(\ini_{\sigma}(\calJ))^{\vee}$ is equigenerated and obtain a formula for the analytic spread of $M$.

\begin{Theorem} \label{analyticSpread}
   Adopt \Cref{LadderSetting}. The analytic spread of $M$ is
   $$\ell(M) = \ell(L) +r-1=r+\sum _{i=1}^{n}\Delta_i.$$
   Moreover, the Alexander dual ideal $(\ini_{\sigma}(\calJ))^{\vee}$ is equigenerated in degree $\dim T -\ell(M)$, and every maximal clique of $\calG$ has the same cardinality, equal to $\ell(M)$. 
\end{Theorem}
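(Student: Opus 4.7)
The plan is to reduce the computation of $\ell(M)$ to a Stanley–Reisner problem and then exploit the distributive lattice structure of $\calL\times[r]$. By \Cref{SAGBIandGB}, we have $\ell(M)=\dim(T/\ini_\sigma(\calJ))$, and by \Cref{thm:sagbiLifts} combined with \Cref{CVGB}, the ring $T/\ini_\sigma(\calJ)$ is Cohen–Macaulay. Since $\ini_\sigma(\calJ)$ is the edge ideal of the complement graph $\calG^\complement$, the ring $T/\ini_\sigma(\calJ)$ is the Stanley–Reisner ring of the clique (flag) complex $\Delta(\calG)$, whose facets are exactly the maximal cliques of $\calG$. Cohen–Macaulayness forces $\Delta(\calG)$ to be pure, so all maximal cliques of $\calG$ share a common cardinality equal to $\dim\Delta(\calG) + 1 = \dim(T/\ini_\sigma(\calJ)) = \ell(M)$.

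Next I would identify this cardinality combinatorially. By the definition of $\calG$ in \Cref{LadderSetting}, two vertices are adjacent if and only if they are comparable in the componentwise order; hence $\calG$ is the comparability graph of the poset $\calL\times[r]$, and the cliques of $\calG$ are precisely the chains of $\calL\times[r]$. By \Cref{dis: join-meet}, $\calL\times[r]$ is a distributive lattice, hence graded, so all maximal chains from its bottom to its top have the same length.

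To compute this length explicitly, I would exhibit a saturated chain. The bottom of $\calL\times[r]$ is $((u_1,\dots,u_n),1)$ and the top is $((v_1,\dots,v_n),r)$, which lie in $\calL$ by the strict inequalities $u_{i-1}<u_i$ and $v_{i-1}<v_i$ in \Cref{LadderSetting}. A saturated chain can be built by first raising $c_n$ from $u_n$ to $v_n$ in $\Delta_n$ unit steps, then $c_{n-1}$ from $u_{n-1}$ to $v_{n-1}$ in $\Delta_{n-1}$ steps, and so on down to $c_1$, and finally raising the last coordinate from $1$ to $r$ in $r-1$ steps. At each stage, when we increment $c_i$, the coordinate $c_{i+1}$ already equals $v_{i+1}>v_i\geq c_i+1$, so the resulting tuple still belongs to $\calL$. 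The total chain has $r + \sum_{i=1}^n \Delta_i$ elements, which must equal the common cardinality of every maximal clique; thus $\ell(M) = r + \sum_{i=1}^n \Delta_i$. Specializing to $r=1$ gives $\ell(L) = 1 + \sum_{i=1}^n \Delta_i$, so $\ell(M) = \ell(L) + r - 1$.

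For the Alexander dual statement, \Cref{rmk:maximal_cliques} puts the minimal generators of $(\ini_\sigma(\calJ))^\vee$ in bijection with the complements of maximal cliques of $\calG$. Since every such clique has cardinality $\ell(M)$, every minimal generator has degree $|\calL\times[r]| - \ell(M) = \dim T - \ell(M)$, proving equigeneration in the claimed degree. The main delicate point, I expect, is verifying that the constructed chain genuinely consists of covers in $\calL\times[r]$ and never exits $\calL$; this is where the normalization from \Cref{shrinkmatrix} and the strict monotonicity of the $u_i$ and $v_i$ in \Cref{LadderSetting} do the essential work.
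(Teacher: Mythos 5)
Your proposal is correct, and while the first half coincides with the paper's argument, the core computation of the analytic spread takes a genuinely different route. Like the paper, you use Cohen--Macaulayness of $T/\ini_{\sigma}(\calJ)$ (via \Cref{thm:sagbiLifts} and \Cref{CVGB}) to get equigeneration: the paper phrases this as height-unmixedness of $\ini_{\sigma}(\calJ)$, you phrase it as purity of the Cohen--Macaulay clique complex $\Delta(\calG)$ --- the same content in Stanley--Reisner language. The divergence is in computing the number $r+\sum_{i=1}^n \Delta_i$. The paper first imports $\ell(M)=\ell(L)+r-1$ from an external result on direct sums of ideals, and then computes $\ell(L)=\dim \calF(\ini_{\tau}(L))$ torically, as the rank of the lattice spanned by the exponent vectors of the monomial generators, by exhibiting an algebraically independent set $\calB$ of the right size and showing every exponent vector lies in the span of the corresponding vectors. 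You instead exploit the purity you have already established: since all maximal cliques of $\calG$ (equivalently, by \Cref{cliqueproperties}, all maximal chains of the lattice $\calL\times[r]$) have common cardinality $\ell(M)$, it suffices to exhibit and count one explicit saturated chain from $\bdu$ to $\bdv$, raising $c_n$, then $c_{n-1}$, and so on, then the $[r]$-coordinate; your verification that the intermediate tuples stay in $\calL$ (using $u_{i-1}<u_i$, $v_i<v_{i+1}$) and that unit-vector increments $\bda\mapsto\bda+\bde_p$ are covers in the componentwise order is sound, so the chain is a maximal clique of size $r+\sum_{i=1}^n\Delta_i$, and the case $r=1$ recovers $\ell(L)$ and hence $\ell(M)=\ell(L)+r-1$ for free. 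What each approach buys: yours is more self-contained (no citation for the $\ell(M)=\ell(L)+r-1$ reduction, no exponent-lattice computation) and in effect anticipates the easy direction of the paper's later \Cref{lem:conse} and \Cref{PL}, which make this length count automatic for every maximal clique; the paper's toric argument, on the other hand, computes $\dim\calF(\ini_{\tau}(L))$ without leaning on purity or the chain combinatorics at all, and the appeal to the general formula $\ell(M)=\ell(L)+r-1$ isolates a structural fact valid well beyond this lattice setting. One could also note, as you implicitly do, that gradedness of the finite distributive lattice $\calL\times[r]$ alone would yield the constant chain length without invoking Cohen--Macaulayness, though purity is needed anyway for the equigeneration claim, so nothing is wasted.
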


\begin{proof}
 By \Cref{SAGBIandGB}~\ref{analytic}, we have $\ell(M) = \dim (T/\ini_{\sigma}(\calJ))$.
Now, by \Cref{CVGB} and \Cref{FiberLadder}, we know that $\ini_{\sigma}(\calJ)$ is a Cohen--Macaulay ideal, hence it is height unmixed with $\Ht(\ini_{\sigma}(\calJ)) = \Ht(\calK) = \dim T -\ell(M)$. Therefore, every minimal monomial generator of $(\ini_{\sigma}(\calJ))^\vee$ has degree $\dim T - \ell(M)$, and every maximal clique has size $\dim T - (\dim T - \ell(M)) = \ell(M)$ by \Cref{rmk:maximal_cliques}. 

Thus, it only remains to compute the analytic spread of $M$.  By \cite[Corollary 3.13]{BAM} we know that $\ell(M)=\ell(L)+r-1$. Therefore, it suffices to prove that $\ell(L)=1+\sum _{i=1}^{n}\Delta_i$. 

 By \Cref{SAGBIandGB}~\ref{analytic} it follows that $\ell(L)=\dim(\mathcal{F}(\ini_{\tau}(L)))$. Recall that $\ini_{\tau}(L)$ is generated by $\ini_{\tau}(\det[\bda])$, where $\bda\in \calL$, see \Cref{discussionFiber}. Notice that for each $\bda \in \calL$ we can write $x_{\bda}=x_{1,a_1}x_{2,a_2}\cdots x_{n,a_n}=\ini_{\tau}(\det[\bda])$.
 Let $s=\dim R$ and 
 let $\bde_{i,j}=(0,\ldots,0,1,0,\ldots,0) \in \KK^{s}$ be the standard basis vectors of $\KK^{s}$ corresponding to the exponent vectors of $x_{i,j}$ for $i\in [n], j\in S_i$. Hence for each $\bda=(a_1, \ldots, a_n) \in \calL$ the exponent vector of $x_{\bda}$ is $\sum_{i=1}^n \bde_{i,a_i}$. By \cite[Lemma 4.2]{Sturmfels} and its proof, it follows that $\dim(\calF(\ini_{\tau}(L)))$ is equal to the dimension of the lattice spanned by the exponent vectors of $\bdx_{\bda}$ with $\bda\in \calL$. Moreover, the dimension of this lattice is the maximum number of linearly independent exponent vectors. 

 We consider the following set
       \begin{align*}
     \calB=& \bigcup\limits_{i=2}^{n} \{x_{1,u_1}x_{2,u_2} \cdots x_{i-1,u_{i-1}}x_{i, a_i} x_{i+1,v_{i+1}} \cdots x_{n, v_n} : u_{i}\le a_{i} < v_{i}\} \\
     &\cup \{x_{1,a_1}x_{2,v_2} \cdots x_{n,v_n} : u_1\le a_1 \le v_1\}.
     \end{align*}

      It is clear that in the set $\mathcal{B}$, each element involves a distinct new variable. This makes the set algebraically independent; hence, $\mathcal{B}$ is part of a transcendence basis of the domain $\mathcal{F}(\operatorname{in}_{\tau}(L))$ over the field $\mathbb{K}$,  and thus $\dim \calF(\ini_{\tau}(L))\ge |\mathcal{B}|=1 +\sum_{i=1}^{n} \Delta_i$.

We see that the set of exponent vectors of the elements in $\mathcal{B}$ is 
\begin{align*}
           \calV&=
            \bigcup\limits_{i=2}^{n} \{\sum_{j=1}^{i-1}\bde_{j,u_j} +\bde_{i,a_i} +\sum _{j=i+1}^n \bde_{j,v_j} : u_{i}\le a_{i} < v_{i}\} \cup \{ \bde_{1,a_1} +\sum _{j=2}^n \bde_{j,v_j}: u_1\le a_1 \le v_1\}.
       \end{align*}
Moreover,  the exponent vector of any  $\bdx_{\bda}\in \calL\setminus \mathcal{B}$, where $\bda=(a_1, \ldots, a_n)$, can be written as a linear combination of  vectors in $ \calV$, since

\begin{align*}
    \sum_{i=1}^n \bde_{i,a_i}&=\sum\limits_{i=1}^{n-1}\left[ [\sum_{j=1}^{i-1}\bde_{j,u_j} +\bde_{i,a_i} +\sum _{j=i+1}^n \bde_{j,v_j}]-[ \sum_{j=1}^{i}\bde_{j,u_j}  +\sum _{j=i+1}^n \bde_{j,v_j}] \right]\\
    &+\sum_{i=1}^{n-1}\bde_{i,u_i}  +\bde_{n,a_n}.
\end{align*} 
Therefore, $\ell(L)=|\mathcal{B}|=1+\sum \limits_{i=1}^n\Delta_i$.
\end{proof}

Given now that all maximal cliques have the same length, we introduce the following notation.

\begin{Notation}
 \label{cliquenotation}
 Adopt \Cref{LadderSetting}. For any clique $\bfA$ in $\calG^r$, without loss of generality we write $\bfA=[\bda^1,\ldots,\bda^{s}]$ with $\bda^i >_{\tau} \bda^{i+1}$ for all $i\in [s-1]$, where $s\le \ell(M)$. Moreover, we denote $T_{\bfA} \coloneq T_{\bda^1} \cdots T_{\bda^{s}}$.
\end{Notation} 

We next observe a few immediate properties of the maximal cliques of $\calG$.

\begin{Remark}
     \label{cliqueproperties}
     Adopt \Cref{LadderSetting} and let $\ell=\ell(M)$.
\begin{enumerate}[a]
      \item \label{rem:tuple inequalities} For any $\bda, \bdb\in \calL \times [r]$ with $\{\bda, \bdb\}\in E(\calG)$ we have $\bda>_{\tau}\bdb$ if and only if $\min\{\bda, \bdb\}=\bda$; equivalently,  $\bda_j\le \bdb_j$ for all $j\in [n+1]$. 
Therefore, $\bfA=[\bda^1,\ldots,\bda^{s}]$ is a clique in $\calG$ if and only if $\bda_{j}^{h}\le \bda_{j}^{k}$ for any $j\in [n+1]$ and any $h,k\in [s]$ with $h<k$, or equivalently if $\min\{\bda^h, \bda^k\}=\bda^h$ for any $h,k\in[s]$ with $h<k$.
      \item In particular, \ref{rem:tuple inequalities} implies that the maximal cliques of $\calG$ are the maximal chains in the lattice $\calL \times [r]$.
      \item \label{rem:first and last in every clique} Let $\bdu \coloneq (u_1,\ldots,u_n,1)$  and  $\bdv \coloneq (v_1,\ldots,v_n,r)$.  Then, for any $\bdb\in \calL \times [r]$ we have that $\{\bdu, \bdb\}, \{\bdv, \bdb\} \in E(\calG)$, since $\min\{\bdb, \bdu\}=\bdu$ and $\min\{\bdb, \bdv\}=\bdb$.  Thus, for any $\bfA=[\bda^1,\ldots,\bda^{\ell}]\in \MC(\calG)$, it follows that $\bda^1=\bdu$  and $\bda^{\ell}=\bdv$.
  \end{enumerate}
\end{Remark}

As we proved in \Cref{analyticSpread}, the Alexander dual ideal $(\ini_{\sigma}(\calJ))^\vee$ is equigenerated. To prove the linear quotient property of $(\ini_{\sigma}(\calJ))^\vee$, we now introduce an order on the set $\MC(\calG)$ of the maximal cliques of $\calG$.

\begin{Definition}
 \label{MC order} 
     Adopt  \Cref{LadderSetting} and recall that $\sigma$ is the graded reverse lexicographic order on $T$. By abuse of notation, for any $\bfA, \bfB \in \MC(\calG)$ we let $\bfA>_{\sigma} \bfB$ if and only if $T_{\bfA}>_{\sigma} T_{\bfB}$.
\end{Definition}

We next show that the order $\sigma$ on $\MC(\calG)$ can be used to enumerate the generators of $(\ini_{\sigma}(\calJ))^{\vee}$ so that $(\ini_{\sigma} (\calJ))^{\vee}$ will have linear quotients.
 
\begin{Observation}\label{goaltoShowlinear}
    Adopt  \Cref{LadderSetting} and let $\MC(\calG)$ be the set of maximal cliques of the graph $\calG$.  As observed in \Cref{rmk:maximal_cliques}, we can write the minimal generators of $(\ini_{\sigma}(\calJ))^\vee$ as
$$ G((\ini_{\sigma}(\calJ))^\vee) = \Set{T_{\bfA^\complement}\coloneqq \prod_{\bda \in \calL \times [r] \setminus \bfA}T_{\bda} \text{ such that } \bfA\in \MC(\calG)}. $$
Let $\mu=\mu((\ini_{\sigma}(\calJ))^\vee)$ and write $G((\ini_{\sigma}(\calJ))^\vee)= \{T_{\bfA_1^{\complement}}, \ldots, T_{\bfA_{\mu}^{\complement}}\}$, where  $\bfA_{i} >_{\sigma} \bfA_{i+1}$ for all $i\in[\mu-1]$ and $\sigma$ is as in \Cref{MC order}.  For each $i\in[2, \mu]$, let $Q_{i}\coloneq \braket{T_{{\bfA^\complement_1}}, \ldots, T_{\bfA_{i-1}^{\complement}}}:T_{\bfA_i^{\complement}}$. Then
$$ Q_{i}=\sum \limits_{j=1}^{i-1}\braket{T_{\bfA_j^{\complement}}:T_{\bfA_i^{\complement}}}=\sum \limits_{\bfA_j>_{\sigma} \bfA_{i}}\braket{T_{\bfA_j^{\complement}}:T_{\bfA_i^{\complement}}}.$$
Notice that 
$$ \braket{T_{\bfA_j^\complement}:T_{\bfA_i^\complement}}= \left\langle\frac{\prod_{\bda \in \calL \times [r]}T_{\bda}}{\prod_{\bda^h \in \bfA_j}T_{\bda^h}}:\frac{\prod_{\bda \in \calL \times [r]}T_{\bda}}{\prod_{\bda^k \in \bfA_i}T_{\bda^k}}\right\rangle= \Bigl\langle
    \prod_{\bda^h \in \bfA_i\setminus \bfA_j}T_{\bda^h} \Bigr \rangle. $$
Therefore, $Q_i= \Bigl \langle \prod \limits_{\bda^h \in \bfA_i\setminus \bfA_j} T_{\bda^h} : \bfA_j>_{\sigma} \bfA_i \Bigr \rangle$. To prove that $(\ini_{\sigma}(\calJ))^{\vee}$ has linear quotients, we need to show that the ideals $Q_i$ are generated by variables, see \Cref{sec:Grobner}. Equivalently, we must show that for any maximal clique $\bfB$, the ideal   
$$ Q_{\bfB}=\sum_{\substack{\bfA>_{\sigma} \bfB \\ \bfA\in \MC(\calG)}} \braket{T_{\bfA^\complement}: T_{\bfB^\complement}}=
\Bigl \langle\prod_{\bdb^i \in \bfB\setminus \bfA}T_{\bdb^i} : \bfA>_{\sigma} \bfB \Bigr \rangle $$
is generated by variables. For this, it suffices to prove that for any $\bfA,\bfB \in \MC(\calG)$ with $\bfA>_{\sigma} \bfB$ we can find a suitable maximal clique $\bfC$ with $\bfC>_{\sigma} \bfB$ such that $|\bfB\setminus \bfC|=1$ and $\bfB\setminus \bfC \subseteq \bfB\setminus \bfA$.
\end{Observation}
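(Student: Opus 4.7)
The plan is to construct the required $\bfC$ explicitly by a carefully chosen swap inside $\bfB$. First, I would identify $\bdb^{\ast}$ as the $\tau$-smallest element of $\bfA \triangle \bfB$; since $\bfA >_{\sigma} \bfB$ and $\sigma$ is the grevlex order built from $\tau$, this $\bdb^{\ast}$ lies in $\bfB \setminus \bfA$, and I write $\bdb^{\ast} = \bdb^{k}$ in the ordering $\bfB = [\bdu = \bdb^{1}, \bdb^{2}, \ldots, \bdb^{\ell} = \bdv]$ given by \Cref{cliqueproperties}.

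The first structural observation I would record is that $\bdb^{j} = \bda^{j}$ for every $j > k$: indeed, each such $\bdb^{j}$ is componentwise above $\bdb^{\ast}$ and hence $\tau$-smaller, so the $\tau$-minimality of $\bdb^{\ast}$ in $\bfA \triangle \bfB$ forces $\bdb^{j} \in \bfA \cap \bfB$, and matching ranks in $\bfA$ and $\bfB$ yields the claimed equality. In particular, $\bdp \coloneq \bdb^{k+1} = \bda^{k+1}$ covers both $\bdb^{k}$ and $\bda^{k}$ inside the distributive lattice $\calL \times [r]$.

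The clean case is when $\bda^{k-1} = \bdb^{k-1}$. Here the interval $[\bdb^{k-1}, \bdp]$ has rank two in $\calL \times [r]$ and contains both $\bdb^{k}$ and $\bda^{k}$; by distributivity it is a diamond, so $\bda^{k}$ is precisely the second rank-$(k-1)$ element of this interval. Setting $\bdc \coloneq \bda^{k}$ and $\bfC \coloneq (\bfB \setminus \{\bdb^{\ast}\}) \cup \{\bdc\}$, the chain $\bfC$ is maximal. Since $\bda^{k} \in \bfA \triangle \bfB$ and $\bda^{k} \neq \bdb^{\ast}$, the $\tau$-minimality of $\bdb^{\ast}$ gives $\bda^{k} >_{\tau} \bdb^{k}$, which forces $\bfC >_{\sigma} \bfB$; meanwhile $\bfB \setminus \bfC = \{\bdb^{\ast}\} \subseteq \bfB \setminus \bfA$ is immediate.

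The harder case is when $\bda^{k-1} \neq \bdb^{k-1}$, so $\bda^{k}$ need not lie between $\bdb^{k-1}$ and $\bdp$. Here I would pass to the block of consecutive indices of $J \coloneq \{j : \bda^{j} \neq \bdb^{j}\}$ ending at $k$ and search for a position $j$ inside this block at which the $\bfB$-increment coordinates satisfy the ascent $\gamma_{j} < \gamma_{j+1}$, writing $\bdb^{j} = \bdb^{j-1} + e_{\gamma_{j}}$. At any such ascent, the candidate $\bdc \coloneq \bdb^{j-1} + e_{\gamma_{j+1}}$ is automatically in $\calL \times [r]$: the only nontrivial ladder constraint to check is strict increase at position $\gamma_{j+1}$, and this follows from $\bdb^{j+1} \in \calL \times [r]$ together with $\gamma_{j} < \gamma_{j+1}$ (which ensures $\gamma_{j} \neq \gamma_{j+1}+1$). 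Moreover, $\bdc - \bdb^{j} = e_{\gamma_{j+1}} - e_{\gamma_{j}}$ with $\gamma_{j} < \gamma_{j+1}$ forces $\bdc >_{\tau} \bdb^{j}$, so swapping $\bdb^{j}$ for $\bdc$ yields the desired $\bfC$.

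The main obstacle is therefore proving the existence of such an ascent position $j$ inside the block, with $\bdb^{j} \in \bfB \setminus \bfA$. I expect this to follow from a careful combinatorial analysis that combines (i) the encoding $\bdb^{j} = \bdu + \sum_{i \leq j} e_{\gamma_{i}}$, which translates $j \notin J$ into the equality of the multisets $\{\gamma_{1}, \ldots, \gamma_{j}\}$ and $\{\alpha_{1}, \ldots, \alpha_{j}\}$ for the increment sequences of $\bfB$ and $\bfA$; (ii) the $\tau$-minimality of $\bdb^{\ast}$, which forces $\alpha_{k+1} < \gamma_{k+1}$ and propagates constraints down through the block; and (iii) the ladder conditions from \Cref{LadderSetting}, which govern when an intermediate element in the lattice is valid. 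This existence, together with the routine verification of the three required properties of $\bfC$, constitutes the technical heart of the proof.
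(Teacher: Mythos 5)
Your proposal tracks the paper's strategy almost step for step, so let me first fix the scope: the Observation itself consists only of the colon-ideal computations and the resulting reduction (``it suffices to find $\bfC$''); you take that routine part as given and instead attack the sufficient condition, whose proof the paper defers to \Cref{Thm: Linear Quotients} via \Cref{LQIfandOnlyif}. Within that scope your ingredients coincide with the paper's: your $\tau$-smallest element $\bdb^{\ast}$ of $\bfA\,\triangle\,\bfB$ together with the tail agreement $\bda^{j}=\bdb^{j}$ for $j>k$ reproduces the index $\delta_{2}$ of \Cref{rmk: left index inequality} (your rank-matching argument implicitly uses \Cref{lem:conse} and \Cref{analyticSpread}, which is fine); your ascent condition $\gamma_{j}<\gamma_{j+1}$ is exactly the LQ condition of \Cref{LQCondition}; the candidate $\bdc=\bdb^{j-1}+\bde_{\gamma_{j+1}}$, the verification that $\bdc\in\calL\times[r]$ (including the point $\gamma_{j}\neq\gamma_{j+1}+1$), and the grevlex comparison giving $\bfC>_{\sigma}\bfB$ are the forward direction of \Cref{LQIfandOnlyif}; and your clean case is correct, being the special case $\delta_{1}=k-1$.

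The genuine gap is that you never prove the one claim everything hinges on: the existence of an ascent position $j$ inside the disagreement block with $\bdb^{j}\in\bfB\setminus\bfA$. You explicitly defer it (``I expect this to follow from a careful combinatorial analysis''), so as written the proposal reduces the problem to itself; this existence is precisely the content of the paper's proof of \Cref{Thm: Linear Quotients}. Moreover, you overestimate what is needed: the closure uses only your ingredients (i) and (ii), while your (iii), the ladder conditions, plays no role in existence (it is needed only to check $\bdc\in\calL\times[r]$, which you already did). Concretely, let $\delta_{1}=\max\{i<k:\bda^{i}=\bdb^{i}\}$. Since $\bda^{\delta_{1}}=\bdb^{\delta_{1}}$ and $\bda^{k+1}=\bdb^{k+1}$, the increment multisets $\{\gamma_{\delta_{1}+1},\ldots,\gamma_{k+1}\}$ and $\{\alpha_{\delta_{1}+1},\ldots,\alpha_{k+1}\}$ coincide (your (i)), and your (ii) gives $\alpha_{k+1}<\gamma_{k+1}$. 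Let $w$ be the common minimum of these multisets and let $v$ be the largest index in $[\delta_{1}+1,k+1]$ with $\gamma_{v}=w$. Since $w\le\alpha_{k+1}<\gamma_{k+1}$, we get $v\le k$; by maximality of $v$ and minimality of $w$, $\gamma_{v}=w<\gamma_{v+1}$, so $v$ is an ascent with $v\in[\delta_{1}+1,k]$, and $\bdb^{v}\in\bfB\setminus\bfA$ by your own rank-matching. This is verbatim the argument of \Cref{Thm: Linear Quotients} (there $w=\min\{p_{i}\}$ and $v=\max\{j:\bde_{p_{j}}=\bde_{w}\}$, in the shifted indexing $p_{i}=\gamma_{i+1}$); once this step is supplied, your construction is complete and coincides with the paper's.
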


 The following crucial lemma shows that for any maximal clique $\bfB=[\bdb^1, \ldots, \bdb^{\ell}]$ the difference between any two consecutive tuples is a standard basis vector $\bde_p=(0,\ldots,0,1,0,\ldots,0) \in \KK^{n+1}$, for some $p\in[n+1]$. 

\begin{Lemma}\label{lem:conse}
     Adopt \Cref{LadderSetting}. Let $\bfB=[\bdb^1, \ldots, \bdb^{\ell}]\in \MC(\calG)$, where $\ell=\ell(M)$.    For every $i\in [\ell-1]$ there exists a $p_i\in[n+1]$ such that $\bdb^{i+1}=\bdb^{i}+\bde_{p_i}$.       
\end{Lemma}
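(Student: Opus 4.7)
The plan is to run a tight pigeonhole argument driven by the formula for $\ell(M)$ established in \Cref{analyticSpread}. The point is that the total $\ell^1$--distance from the first vertex of any maximal clique to the last exactly matches the number of available steps, so each step must be a unit step.

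More concretely, I would proceed as follows. First, by \Cref{cliqueproperties}\ref{rem:first and last in every clique} we have $\bdb^1 = \bdu = (u_1, \ldots, u_n, 1)$ and $\bdb^{\ell} = \bdv = (v_1, \ldots, v_n, r)$, so
\[
\sum_{j=1}^{n+1}(\bdb^{\ell}_j - \bdb^1_j) \;=\; \sum_{j=1}^{n}(v_j - u_j) + (r-1) \;=\; \sum_{j=1}^{n}\Delta_j + r - 1 \;=\; \ell(M) - 1 \;=\; \ell - 1,
\]
where the last two equalities come from \Cref{analyticSpread}. Second, by \Cref{cliqueproperties}\ref{rem:tuple inequalities} applied to consecutive clique members, $\bdb^i_j \le \bdb^{i+1}_j$ coordinatewise for each $i \in [\ell-1]$. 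Since $\bdb^i \neq \bdb^{i+1}$ (they are distinct vertices), the nonnegative integer
\[
d_i \;\coloneq\; \sum_{j=1}^{n+1}\bigl(\bdb^{i+1}_j - \bdb^{i}_j\bigr)
\]
is at least $1$ for every $i \in [\ell-1]$.

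Finally, I would telescope:
\[
\sum_{i=1}^{\ell-1} d_i \;=\; \sum_{j=1}^{n+1}\bigl(\bdb^{\ell}_j - \bdb^{1}_j\bigr) \;=\; \ell - 1.
\]
Combining this with $d_i \ge 1$ forces $d_i = 1$ for all $i$. But $d_i = 1$ together with $\bdb^{i+1} - \bdb^i \ge 0$ coordinatewise means exactly one coordinate of $\bdb^{i+1}-\bdb^i$ equals $1$ and all others are $0$, i.e.\ $\bdb^{i+1} = \bdb^{i} + \bde_{p_i}$ for some $p_i \in [n+1]$, as desired.

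There is really no hard step here; the entire content of the lemma is that \Cref{analyticSpread} gives the clique length that makes the pigeonhole count sharp. The only thing one needs to double-check is that the tuples in the clique strictly increase coordinatewise, but this is immediate from the definition of $\calG$ and the characterization of its edges recorded in \Cref{cliqueproperties}\ref{rem:tuple inequalities}, and that the endpoints are $\bdu$ and $\bdv$, which is exactly \Cref{cliqueproperties}\ref{rem:first and last in every clique}.
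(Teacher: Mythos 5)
Your proof is correct, and there is no circularity in leaning on \Cref{analyticSpread}: that theorem is proved via the Cohen--Macaulayness of $\calF(M)$ and a transcendence-degree computation, independently of \Cref{lem:conse}, so you may freely use both the formula $\ell(M)=r+\sum_{i=1}^{n}\Delta_i$ and the fact that every maximal clique has cardinality $\ell(M)$. However, your route is genuinely different from the paper's. The paper argues by contradiction with a local exchange: assuming $\bda=\bdb^{i+1}-\bdb^{i}$ has coordinate sum at least $2$, it sets $q=\min\{j : a_j>0\}$, inserts $\bdc=\bdb^{i}+\bde_q$ strictly between $\bdb^{i}$ and $\bdb^{i+1}$ in the $\tau$-order, checks $\{\bdc,\bdb^{j}\}\in E(\calG)$ for all $j$, and contradicts maximality of $\bfB$; notably, it uses only maximality and never the numerical value of $\ell(M)$. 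You instead run a global count: telescoping the coordinatewise increments from $\bdu$ to $\bdv$ gives total $\sum_{i=1}^{n}\Delta_i+r-1=\ell-1$, forcing each of the $\ell-1$ steps to have coordinate sum exactly $1$. What each approach buys: your argument is shorter and sidesteps the one delicate point in the paper's proof, namely verifying that the inserted tuple is actually a vertex, i.e.\ that $\bdc\in\calL\times[r]$ --- a check the paper's proof in fact omits, and which can fail for the minimal choice of $q$ (if $q\le n-1$ and $\bdb^{i}_{q+1}-\bdb^{i}_{q}=1$ then $\bdc_q=\bdc_{q+1}$, so $\bdc\notin\calL\times[r]$; taking $\bdc=\bdb^{i+1}-\bde_q$ repairs this). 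On the other hand, the paper's insertion technique establishes the stronger local fact that any comparable pair in a clique differing by a non-unit vector admits an intermediate vertex (in effect, that chains in $\calL\times[r]$ can be refined), without appealing to the SAGBI/Gr\"obner machinery behind \Cref{analyticSpread}, and this exchange construction is the prototype reused later in \Cref{LQIfandOnlyif} and \Cref{lem:shift left}. Since the lemma's statement already presupposes that $\bfB$ has $\ell(M)$ elements, the extra logical cost of your dependence on \Cref{analyticSpread} is modest.
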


\begin{proof}
Let $\bfB=[\bdb^1,\bdb^2, \ldots, \bdb^\ell]$.
Suppose by contradiction that there exists an $i \in [\ell-1]$ such that $\bdb^{i+1}-\bdb^{i}\neq \bde_p$ for any $p\in [n+1]$.
Let $\bda\coloneq\bdb^{i+1}-\bdb^{i}=(a_1, \ldots, a_{n+1})$. By \Cref{cliqueproperties}~\ref{rem:tuple inequalities} we have that $\bdb^{i}>_{\tau}\bdb^{i+1}$ and $\min\{\bdb^i, \bdb^{i+1}\}=\bdb^{i}$.  Therefore, $a_j\ge 0$ for all $j\in [n+1]$ and $\displaystyle \sum_{j=1}^{n+1} a_j>1$ as $\bda \neq \bde_p$ for any $p\in [n+1]$. 

Let $q \coloneq \min\{j \mid a_j>0\}$, $\bdc \coloneq \bdb^{i}+\bde_q$, and notice that $\bdc \neq \bdb^{i+1}$. Now, 
$\min\{\bdb^i, \bdc\}=\bdb^i$; moreover,  for all $j \le i$, we have that $\min\{\bdb^{j}, \bdc\}=\bdb^{j}$, since $\min\{\bdb^{j}, \bdb^{i}\}=\bdb^j$. In other words, $\bdb^{i}>_{\tau}\bdc$ and $\{\bdb^{j}, \bdc\}\in E(\calG)$ for all $j\le i$.

 Similarly, by the choice of $q$ we have $\bdc_k=\bdb^i_k\le \bdb^{i+1}_k$ for all $k\neq q$ and $\bdc_q=\bdb^i_q+1 \le\bdb^{i+1}_q$ and $\min\{\bdc, \bdb^{i+1}\}=\bdc$. As above, we conclude that $\bdc>_{\tau}\bdb^{i+1} $ and $\{\bdc, \bdb^{j}\}\in E(\calG)$ for all $j\ge i+1$, since $\min\{\bdb^{i+1}, \bdb^{j}\}=\bdb^{i+1}$ for all $j\ge i+1$.
 
 Now, as $\bdb^{i}>_{\tau} \bdc >_{\tau} \bdb^{i+1}$ and $\{\bdc, \bdb^{j}\}\in E(\calG)$ for all $j\in [\ell]$, then $[\bdb^1, \ldots, \bdb^i, \bdc, \bdb^{i+1}, \ldots, \bdb^{\ell}]$ is a clique of $\calG$, contradicting the maximality of $\bfB$.
\end{proof}

With \Cref{lem:conse} in mind, to each maximal clique $\bfB$ we associate a sequence of positive integers as follows.

\begin{Definition}\label{positions}
 Adopt \Cref{LadderSetting} and let $\ell=\ell(M)$.
    \begin{enumerate}[a]
      \item Let $\bfB=[\bdb^1, \ldots, \bdb^{\ell}] \in\MC(\calG)$.
        For each  $i\in [\ell-1]$ let $\bde_{p_i}^{\bfB}=\bdb^{i+1}-\bdb^{i}$.   
        We call 
        $p_i$
        the \emph{$i$-th position change} of $\bfB$.  Whenever there is no ambiguity, we write $\bde_{p_i}$ for $\bde_{p_i}^{\bfB}$. 
      \item \label{Delta count} For each $\bfB=[\bdb^1, \ldots, \bdb^{\ell}] \in\MC(\calG)$ we call $P^{\bfB}\coloneq(p_1, \ldots, p_{\ell-1})$ the \emph{tuple of the position changes in} $\bfB$, where $\bde_{p_i} = \bdb^{i+1}-\bdb^i$ for all $i\in[\ell-1]$.     
   \end{enumerate} 
\end{Definition}

\begin{Remark}
\label{PL}
    Adopt \Cref{LadderSetting} and let $\ell=\ell(M)$. For every $\bfB=[\bdb^1, \ldots, \bdb^{\ell}] \in\MC(\calG)$ with $P^{\bfB}=(p_1, \ldots, p_{\ell-1})$ as in \Cref{positions}, $\{p_1,\ldots, p_{\ell-1}\}$ coincides with the collection of integers
      $$ P_{\calL \times [r]}\coloneq\{ \underbrace{1, \ldots, 1}_{\Delta_1 \text{ times}}, \ldots, \underbrace{n, \ldots, n}_{\Delta_n \text{ times}}, \underbrace{n+1, \ldots, n+1}_{r-1 \text{ times}} \},  $$
    where for all $i\in[n]$ the integer $i$ appears with multiplicity $\Delta_i=v_i-u_i$, while $n+1$ appears $r-1$ times. Indeed, as observed in \Cref{cliqueproperties}~\ref{rem:first and last in every clique}, $\bdb^1=\bdu$, $\bdb^{\ell}=\bdv$. Moreover, by definition each $p_i \in [n+1]$ and for every $i\in[2, \ell-1]$, we have $\bdb^i_j\in [u_j,v_j]$ for all $j\in [n]$ and $\bdb^{i}_{n+1}\in [r]$. Additionally, $\ell = \sum \limits_{i=1}^{n}\Delta_{i}+r$ by \Cref{analyticSpread}.
\end{Remark}

 We next observe that, for $\bfA,\bfB \in \MC(\calG)$ the condition that $\bfA>_{\sigma} \bfB$ imposes restrictions on certain position changes for $\bfA, \bfB$.

\begin{Remark} 
 \label{rmk: left index inequality}
 Adopt \Cref{LadderSetting} and let $\ell=\ell(M)$.  
  Suppose $\bfA,\bfB\in\MC(\calG)$ with $\bfA>_{\sigma} \bfB$. Since $\sigma$ is the graded reverse lexicographic order, then by \Cref{cliqueproperties}~\ref{rem:first and last in every clique} there exists $i\in [2, \ell-1]$ so that $\bda^{i}\neq \bdb^{i}$ and $\bda^{j}=\bdb^{j}$ for all $j\in [i+1, \ell]$. Write $\bda^{i+1}-\bda^i=\bde_{q_i}$ and $\bdb^{i+1}-\bdb^i=\bde_{p_i}$, see \Cref{lem:conse}. Since $\bfA>_{\sigma} \bfB$, then $\bda^i>_{\tau}\bdb^i$.
   In fact, whenever $\bda^{i+1}=\bdb^{i+1}$ for some $i\in [\ell-1]$, then $\bda^i>_{\tau}\bdb^i$ if and only if $q_i<p_i$. Indeed, $\bda^i>_{\tau}\bdb^i$ if and only if the left most nonzero entry of $\bda^{i}-\bdb^{i}=-\bde_{q_i}+\bde_{p_i}$ is negative and this is equivalent to $q_i<p_i$.
\end{Remark}

Motivated by this remark, we introduce the following definition. 

\begin{Definition}\label{LQCondition}
Adopt \Cref{LadderSetting} and let $\bfB=[\bdb^1, \ldots, \bdb^\ell]\in \MC(\calG)$. For $s\in[2, \ell-1]$, we say that $\bdb^s \in \bfB$ satisfies the \emph{LQ condition} if $\bdb^{s}-\bdb^{s-1}=\bde_{p_{s-1}}$, $\bdb^{s+1}-\bdb^{s}=\bde_{p_{s}}$, and $p_{s-1}<p_{s}$.
\end{Definition}

Recall by \Cref{goaltoShowlinear} that, given maximal cliques $\bfA,\bfB$ with $\bfA >_{\sigma} \bfB$, we are looking for another maximal clique $\bfC$ such that $|\bfC \setminus \bfB| =1$, $\bfC>_{\sigma} \bfB$ and $\bfB \setminus \bfC  \subseteq \bfB \setminus \bfA$.
We now show that the LQ condition is enough to guarantee the existence of such a maximal clique $\bfC$. 

\begin{Proposition}\label{LQIfandOnlyif}
Adopt \Cref{LadderSetting}. Let $\ell=\ell(M)$ and $\bfB\in \MC(\calG)$. For any $s\in[2, \ell-1]$, $\bdb^s\in\bfB$ satisfies the LQ condition if and only if there exists a $\bfC\in \MC(\calG)$ such that $\bfC>_{\sigma}\bfB$ and $\bfB \setminus \bfC=\{\bdb^s\}$.
\end{Proposition}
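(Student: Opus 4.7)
My plan is to prove both implications by the same explicit construction: exchanging the order of two consecutive increments in the chain $\bfB$.

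For the forward direction, assume $\bdb^s-\bdb^{s-1}=\bde_{p_{s-1}}$, $\bdb^{s+1}-\bdb^s=\bde_{p_s}$, with $p_{s-1}<p_s$. I would define $\bdc^s\coloneqq \bdb^{s-1}+\bde_{p_s}=\bdb^{s+1}-\bde_{p_{s-1}}$ and let $\bfC\coloneqq[\bdb^1,\dots,\bdb^{s-1},\bdc^s,\bdb^{s+1},\dots,\bdb^\ell]$, where $\ell=\ell(M)$. The verification then proceeds in three steps. First, I check that $\bdc^s\in\calL\times[r]$, a coordinate-level bookkeeping argument combining the ladder conditions on $\bdb^{s-1}$ and $\bdb^{s+1}$. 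Second, I observe that $\bdb^{s-1}<\bdc^s<\bdb^{s+1}$ componentwise, so $\bfC$ is a chain of length $\ell$ in the lattice $\calL\times[r]$, hence a maximal clique of $\calG$ by \Cref{analyticSpread}, with $\bdc^s\notin\bfB$ (since $\bdb^s$ is the unique element of $\bfB$ strictly between $\bdb^{s-1}$ and $\bdb^{s+1}$) and therefore $\bfB\setminus\bfC=\{\bdb^s\}$. Third, since $\bfC$ and $\bfB$ differ only at position $s$, the comparison $T_\bfC>_\sigma T_\bfB$ reduces to $T_{\bdc^s}>T_{\bdb^s}$, equivalently $\bdc^s>_\tau \bdb^s$, which follows because $\bdc^s-\bdb^s=\bde_{p_s}-\bde_{p_{s-1}}$ has first nonzero entry $-1$ at position $p_{s-1}$.

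For the backward direction, since $|\bfB|=|\bfC|=\ell$ and $|\bfB\setminus\bfC|=1$, $\bfC$ is obtained from $\bfB$ by replacing $\bdb^s$ with a unique new element $\bdc^s$, which by the chain property must satisfy $\bdb^{s-1}<\bdc^s<\bdb^{s+1}$ componentwise. Applying \Cref{lem:conse} to $\bfC$, both $\bdc^s-\bdb^{s-1}$ and $\bdb^{s+1}-\bdc^s$ are standard basis vectors whose sum is $\bde_{p_{s-1}}+\bde_{p_s}$, so a short case analysis (using $\bdc^s\ne\bdb^s$) forces $p_{s-1}\ne p_s$ and pins down $\bdc^s=\bdb^{s-1}+\bde_{p_s}$. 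Since $\bdc^{s+1}=\bdb^{s+1}$, the equivalence in \Cref{rmk: left index inequality} applied with $i=s$ translates $\bdc^s>_\tau\bdb^s$ (which comes from $\bfC>_\sigma\bfB$) into the inequality on position changes $p_{s-1}<p_s$, i.e., the LQ condition.

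The most delicate step will be verifying $\bdc^s\in\calL\times[r]$ in the forward direction. Since $\bdc^s$ agrees with $\bdb^{s-1}$ at every coordinate except $p_s$, where $\bdc^s_{p_s}=\bdb^{s+1}_{p_s}$, the membership in the appropriate $S_{p_s}$ (or $[r]$ when $p_s=n+1$) and the strict column inequalities $c_1<\dots<c_n$ all reduce to inequalities already satisfied by $\bdb^{s-1}$ or $\bdb^{s+1}$. The hypothesis $p_{s-1}<p_s$ ensures $p_s\ge 2$, so the coordinate $p_s-1$ is in range whenever it is relevant, and the boundary case $p_s=n+1$ must be treated separately since no strict-inequality constraint is imposed on the last coordinate. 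This is essentially bookkeeping, but it is the one place where the ladder structure from \Cref{LadderSetting} has to be used directly.
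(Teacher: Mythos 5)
Your proposal is correct and follows essentially the same route as the paper's proof: the same exchange construction $\bdc = \bdb^{s-1}+\bde_{p_s}$ with the same coordinate checks for membership in $\calL\times[r]$ in the forward direction, and the same use of \Cref{lem:conse} to pin down the position of the new element and of \Cref{rmk: left index inequality} to extract $p_{s-1}<p_s$ in the backward direction. The only (harmless) variation is that you certify maximality of $\bfC$ by cardinality via \Cref{analyticSpread}, whereas the paper verifies the edge conditions directly.
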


\begin{proof}
  Let $\bfB=[\bdb^1, \ldots, \bdb^\ell]$ and let $P^{\bfB}=(p_1, \ldots, p_{\ell-1})$. Suppose $\bdb^s$ satisfies the LQ condition for some $s\in [2, \ell-1]$. Then $\bdb^{s}-\bdb^{s-1}=\bde_{p_{s-1}}$, $\bdb^{s+1}-\bdb^{s}=\bde_{p_{s}}$, and  $p_{s-1}<p_s$. Let $\bdc \coloneq \bdb^{s-1}+\bde_{p_s}$ and let $\bfC\coloneq (\bfB\setminus\{\bdb^{s}\}) \cup \{\bdc\}$. We claim that $\bfC\in \MC(\calG)$ and $\bfC>_{\sigma} \bfB$.

We first show that $\bdc \in \calL \times[r]$, that is, for all $i\in [n]$ we must have $\bdc_i\in S_i$, $\bdc_{n+1}\in [r]$, and  $\bdc_i < \bdc_{i+1}$ for all $i \in [n-1]$. To this end, notice that  
$$ \bdb^{s+1}=\bdb^{s}+\bde_{p_s}=\bdb^{s-1}+\bde_{p_{s-1}}+\bde_{p_{s}}=\bdc+\bde_{p_{s-1}}. $$ 
For any $i\in[n+1]$, we have $\bdc_i=\bdb_i^{s-1}$ if $i\neq p_s$ and $\bdc_{p_s}=\bdb^{s+1}_{p_s}$. Then $\bdc_i\in S_i$ for all $i\in [n]$ and $\bdc_{n+1}\in [r]$. Thus we only need to show that $\bdc_i<\bdc_{i+1}$ for all $i\in[n-1]$.

Notice that for all $i\not\in\{ p_{s}, p_{s-1}-1\}$ we have $\bdc_i=\bdb_i^{s-1}<\bdb_{i+1}^{s-1}\le \bdb^{s+1}_{i+1}=\bdc_{i+1}$, as $i+1\neq p_{s-1}$.  On the other hand, $\bdc_{p_{s}}=\bdb^{s+1}_{p_{s}}<\bdb^{s+1}_{p_{s}+1}=\bdc_{p_s+1}$, since $p_{s-1}<p_s$. Moreover, $\bdc_{p_{s-1}-1}=\bdb_{p_{s-1}-1}^{s+1}=\bdb_{p_{s-1}-1}^{s-1}<\bdb_{p_{s-1}}^{s-1}=\bdc_{p_{s-1}}$, where the second equality follows since $p_{s-1}<p_s$. Thus, $\bdc\in \calL\times [r]$.

Now, it is clear that $\bdb^{s-1}>_{\tau} \bdc$ and $\bdc=\bdb^{s-1}+\bde_{p_s}>_{\tau} \bdb^{s-1}+\bde_{p_{s-1}} = \bdb^{s} >_{\tau}\bdb^{s+1}$, since $p_{s-1}<p_{s}$. Moreover, $\min\{\bdb^{s-1}, \bdc\}=\bdb^{s-1}$ since $\bdc=\bdb^{s-1}+\bde_{p_s}$ and for all $j\in [s-1]$ we have $\min\{\bdb^j, \bdc\}=\bdb^{j}$, since $\min\{\bdb^j, \bdb^{s-1}\}=\bdb^{j}$. Therefore, $\{\bdb^j, \bdc\}\in E(\calG)$ for all $j\le s-1$. Similarly, $\{\bdc, \bdb^j\}\in E(\calG)$ for all $j\ge s+1$. Hence $\bfC=[\bdb^1, \ldots, \bdb^{s-1}, \bdc, \bdb^{s+1}, \ldots, \bdb^{\ell}] \in \MC(\calG)$. Finally, since $\bfC\setminus \bfB=\{\bdc\}$, $\bfB\setminus \bfC=\{\bdb^{s}\}$, and $\bdc>_{\tau}\bdb^{s}$, we deduce that $T_{\bfC} >_{\sigma}T_{\bfB}$, hence $\bfC>_{\sigma} \bfB$.

Conversely, suppose that $\bfC \in \MC(\calG)$ is such that $\bfC>_{\sigma}\bfB$ and $\bfB \setminus\bfC=\{\bdb^s\}$ for some $s\in[2, \ell-1]$. We prove that $\bdb^s$ satisfies the LQ condition. 

Since $\bfB \setminus\bfC=\{\bdb^s\}$, by \Cref{analyticSpread} there is some $\bdc$ such that $\bfC\setminus \bfB=\{\bdc\}$. We claim that  $\bfC=[\bdb^1, \ldots, \bdb^{s-1}, \bdc, \bdb^{s+1}, \ldots, \bdb^{\ell}]$. Indeed, if either $\bdc>_{\tau} \bdb^{s-1}$ or $\bdc<_{\tau} \bdb^{s+1}$, then $\bdb^{s-1}, \bdb^{s+1}$ will be consecutive tuples in $\bfC$, which is impossible by \Cref{lem:conse} since $\bdb^{s+1}=\bdb^{s-1}+\bde_{p_{s-1}}+\bde_{p_{s}}$.
This shows that $\bfC=[\bdb^1, \ldots, \bdb^{s-1}, \bdc, \bdb^{s+1}, \ldots, \bdb^{\ell}]$.

Now, let $P^{\bfC}=(q_1, \ldots, q_{\ell-1})$ and recall that $\{p_1, \ldots, p_{\ell-1}\} = \{q_1, \ldots, q_{\ell-1}\}$ by \Cref{PL}. Moreover, $p_i=q_i$ for all $i\in [s-2] \cup [s+1, \ell-1]$. Therefore, $\{p_{s-1}, p_s\}=\{q_{s-1}, q_s\}$, and thus $p_{s-1}=q_s$ and $p_s=q_{s-1}$. Since $\bfC >_{\sigma} \bfB$, then by \Cref{rmk: left index inequality} $\bdc >_{\tau} \bdb^{s}$. Thus, $q_s<p_s$, or in other words, $p_{s-1}<p_s$. In particular, $\bdb^s$ satisfies the LQ condition.
\end{proof}

\begin{Theorem} \label{Thm: Linear Quotients}
   Adopt \Cref{LadderSetting}. The ideal $(\ini_{\sigma}(\calJ))^{\vee}$ has linear quotients.
\end{Theorem}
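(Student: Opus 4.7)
The plan is to combine \Cref{goaltoShowlinear} with \Cref{LQIfandOnlyif}: given any $\bfA, \bfB \in \MC(\calG)$ with $\bfA >_{\sigma} \bfB$, I will produce some $\bdb^s \in \bfB \setminus \bfA$ satisfying the LQ condition of \Cref{LQCondition}. By \Cref{LQIfandOnlyif}, this yields a maximal clique $\bfC >_{\sigma} \bfB$ with $\bfB \setminus \bfC = \{\bdb^s\} \subseteq \bfB \setminus \bfA$, which by \Cref{goaltoShowlinear} suffices to conclude that every colon ideal $Q_i$ is generated by variables.

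To locate such $\bdb^s$, I would first take $i$ to be the largest index with $\bda^i \neq \bdb^i$. By \Cref{cliqueproperties}~\ref{rem:first and last in every clique}, $i \in [2, \ell - 1]$, where $\ell = \ell(M)$. Writing $\bda^{i+1} - \bda^i = \bde_{q_i}$ and $\bdb^{i+1} - \bdb^i = \bde_{p_i}$ as guaranteed by \Cref{lem:conse}, \Cref{rmk: left index inequality} gives $q_i < p_i$. Next, let $i' \in [2, i]$ be the smallest index such that $\bda^j \neq \bdb^j$ for every $j \in [i', i]$; since $\bda^1 = \bdu = \bdb^1$, the minimality of $i'$ forces $\bda^{i'-1} = \bdb^{i'-1}$. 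Because the subchains of $\bfA$ and $\bfB$ from $\bda^{i'-1}$ to $\bda^{i+1} = \bdb^{i+1}$ share both endpoints, their total displacement vectors coincide, and hence the multisets of position changes agree:
\[
\{q_{i'-1}, q_{i'}, \ldots, q_i\} = \{p_{i'-1}, p_{i'}, \ldots, p_i\}.
\]

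The key claim will be that the sequence $(p_{i'-1}, \ldots, p_i)$ cannot be weakly decreasing. If it were, $p_i$ would be the minimum of the sequence, hence of the common multiset, contradicting the fact that $q_i$ lies in that multiset with $q_i < p_i$. Consequently there exists $s \in [i', i]$ with $p_{s-1} < p_s$, so $\bdb^s$ satisfies the LQ condition; by the definition of $i'$, $\bdb^s \neq \bda^s$, placing $\bdb^s$ in $\bfB \setminus \bfA$ as desired.

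The hardest step will be pinpointing the correct index $s$: the natural candidate $s = i$ may fail to satisfy the LQ condition because $p_{i-1}$ can exceed $p_i$. The fix is to enlarge the window to the entire maximal block $[i', i]$ of indices at which $\bfA$ and $\bfB$ disagree, so that the matching-multiset trick can be applied. The graded reverse lexicographic nature of $\sigma$ enters only through \Cref{rmk: left index inequality}, which anchors the argument at the rightmost mismatch between $\bfA$ and $\bfB$.
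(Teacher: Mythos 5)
Your proposal is correct and follows essentially the same route as the paper's proof: both anchor at the rightmost mismatch (where \Cref{rmk: left index inequality} gives $q_i < p_i$), extend to the maximal window of disagreement with equal endpoints, match the multisets of position changes, and extract an ascent $p_{s-1} < p_s$ to invoke \Cref{LQCondition} and \Cref{LQIfandOnlyif}. The only cosmetic difference is that the paper locates the ascent constructively (taking $v$ to be the last occurrence of the minimum value $w$ in the window, so that $p_v < p_{v+1}$), whereas you argue its existence by contradiction from the window not being weakly decreasing; these are interchangeable.
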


\begin{proof}
     By \Cref{goaltoShowlinear}, it suffices to show that for $\bfA=[\bda^1,\ldots,\bda^\ell],\bfB=[\bdb^1,\ldots,\bdb^\ell]$ maximal cliques such that $\bfA>_{\sigma} \bfB$, there exists $\bfC \in \MC(\calG)$ with $\bfC>_{\sigma} \bfB$ such that $|\bfB\setminus \bfC|=1$ with $\bfB\setminus \bfC \subseteq \bfB\setminus \bfA$. 
    
    For such maximal cliques $\bfA, \bfB$, since $\bfA>_\sigma \bfB$, then by \Cref{rmk: left index inequality} there exists $\delta_2 \in [2,\ell-1]$ such that $\bda^k=\bdb^k$ for all $k\in [\delta_2+1,\ell]$ and $\bda^{\delta_2}>_{\tau}\bdb^{\delta_2}$.  Let $\delta_1=\max \{i<\delta_2 : \bda^{i} =\bdb^{i}\}$.  Then $\bda^{\delta_1}=\bdb^{\delta_1}$, $\bda^{\delta_2+1}=\bdb^{\delta_2+1}$, and  $\bda^{k}\neq \bdb^k$ for all $k\in [\delta_1+1,\delta_2]$.  
    Write $P^{\bfA}=(q_1,\ldots, q_{\ell-1})$ and $P^{\bfB}=(p_1, \ldots, p_{\ell-1})$.  Therefore, 
\begin{eqnarray*}
    \bda^{\delta_2+1}-\bda^{\delta_1}&=&(\bda^{\delta_2+1}-\bda^{\delta_2})+(\bda^{\delta_2}-\bda^{\delta_2-1})+\cdots +(\bda^{\delta_1+1}-\bda^{\delta_1})=\sum\limits_{k=\delta_1}^{\delta_2} \bde_{q_k} 
\end{eqnarray*}
and similarly, $\bdb^{\delta_2+1}-\bdb^{\delta_1}=\sum\limits_{k=\delta_1}^{\delta_2} \bde_{p_k}$. Since $\bda^{\delta_2+1}-\bda^{\delta_1} = \bdb^{\delta_2+1}-\bdb^{\delta_1}$ we must have $\{q_{\delta_1},\ldots,q_{\delta_2}\} = \{p_{\delta_1},\ldots,p_{\delta_2}\}$. Choose $w=\min \{p_{i}: i\in [\delta_1, \delta_2]\}$
 and $v=\max\{j : \bde_{p_j}=\bde_w, \, j\in [\delta_1,\delta_2]\}$. We claim that $\bdb^{v+1}$ satisfies the LQ condition. 

Notice that since $\bda^{\delta_2+1}=\bdb^{\delta_2+1}$ and $\bda^{\delta_2}>_{\tau} \bdb^{\delta_2}$, then $q_{\delta_2}<p_{\delta_2}$, by \Cref{rmk: left index inequality}. Hence $w\neq p_{\delta_2}$ and thus $v<\delta_2$. Therefore, by construction $p_v<p_{v+1}$ and thus $\bdb^{v+1}$ satisfies the LQ condition by \Cref{LQCondition}. Hence by \Cref{LQIfandOnlyif} there exists $\bfC\in \MC(\calG)$ such that $\bfC>_{\sigma} \bfB$ and $\bfB\setminus \bfC=\{\bdb^{v+1}\}$. Moreover, $\bfB\setminus \bfC= \{\bdb^{v+1}\} \subseteq \bfB\setminus \bfA$, since $v+1\le \delta_2$.
\end{proof}

We finally obtain a description of the generators of the linear quotient ideals. 

\begin{Corollary} \label{cor: linear quot}
    Adopt \Cref{LadderSetting} and let $\ell=\ell(M)$. Let $\bfB\in \MC(\calG^r)$ and consider the linear quotient ideal $Q_{\bfB}=\sum_{\bfA>_{\sigma} \bfB} \braket{T_{\bfA^\complement}: T_{\bfB^\complement}}$. Then 
    $$Q_{\bfB}=\langle \bdb^{s} \in \bfB: \bdb^{s} \mbox{ satisfies the } LQ \mbox{ condition}, s\in[2, \ell-1]\rangle.$$
\end{Corollary}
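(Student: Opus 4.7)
The plan is to prove the equality by showing both inclusions, using \Cref{LQIfandOnlyif} and the constructive argument of \Cref{Thm: Linear Quotients}.

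For the inclusion $\supseteq$, suppose $\bdb^s\in\bfB$ satisfies the LQ condition for some $s\in[2,\ell-1]$. By \Cref{LQIfandOnlyif}, there exists $\bfC\in\MC(\calG)$ with $\bfC>_{\sigma}\bfB$ and $\bfB\setminus\bfC=\{\bdb^s\}$. Then as computed in \Cref{goaltoShowlinear},
$$\langle T_{\bfC^\complement}:T_{\bfB^\complement}\rangle=\Bigl\langle\prod_{\bdb^i\in\bfB\setminus\bfC}T_{\bdb^i}\Bigr\rangle=\langle T_{\bdb^s}\rangle\subseteq Q_{\bfB}.$$
Hence every generator of the right-hand side lies in $Q_{\bfB}$.

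For the reverse inclusion $\subseteq$, take any $\bfA\in\MC(\calG)$ with $\bfA>_{\sigma}\bfB$, so that $Q_{\bfB}$ contains the generator $\prod_{\bdb^i\in\bfB\setminus\bfA}T_{\bdb^i}$. The proof of \Cref{Thm: Linear Quotients} constructs a maximal clique $\bfC$ with $\bfC>_{\sigma}\bfB$, $|\bfB\setminus\bfC|=1$, and $\bfB\setminus\bfC\subseteq\bfB\setminus\bfA$. Writing $\bfB\setminus\bfC=\{\bdb^s\}$, \Cref{LQIfandOnlyif} then guarantees that $\bdb^s$ satisfies the LQ condition and $s\in[2,\ell-1]$. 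Since $\bdb^s\in\bfB\setminus\bfA$, the variable $T_{\bdb^s}$ divides $\prod_{\bdb^i\in\bfB\setminus\bfA}T_{\bdb^i}$, so this generator lies in the ideal on the right-hand side. Taking the sum over all $\bfA>_{\sigma}\bfB$ yields the containment.

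Since both key inputs (\Cref{LQIfandOnlyif} and the construction inside \Cref{Thm: Linear Quotients}) are already established, no substantive obstacle remains; the corollary is essentially a bookkeeping observation that the generators of $Q_{\bfB}$ produced by the linear-quotient argument are exactly the variables $T_{\bdb^s}$ indexed by the tuples of $\bfB$ satisfying the LQ condition.
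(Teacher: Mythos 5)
Your proof is correct and follows exactly the route the paper intends: its own proof is the one-line citation of \Cref{Thm: Linear Quotients}, \Cref{goaltoShowlinear}, and \Cref{LQIfandOnlyif}, and you have simply spelled out the two inclusions those results yield (with the containment $s\in[2,\ell-1]$ justified, as in the paper, by the fact that $\bdu,\bdv$ lie in every maximal clique). No gaps; this matches the paper's argument.
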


\begin{proof}
    The result follows from \Cref{Thm: Linear Quotients}, \Cref{goaltoShowlinear}, and \Cref{LQIfandOnlyif}.
\end{proof}

\section{The sequence index of a maximal clique and the regularity of $\calF(M)$}\label{sec:reg}

The goal of this section is to determine the regularity of the special fiber ring $\calF(M)$, where $M=\bigoplus_{i=1}^{r}L$ is as in \Cref{LadderSetting}.
As we discussed in \Cref{SAGBIandGB}~\ref{regToPd}, this task is reduced to calculating the projective dimension of $(\ini_{\sigma}(\calJ))^{\vee}$, which by \Cref{MaxLenghtToPd} and \Cref{goaltoShowlinear} is the maximum number of minimal generators of the linear quotient ideals $Q_{\bfB}$ with $\bfB\in \MC(\calG)$.
We begin by providing an algorithm to construct a special maximal clique whose linear quotient ideal will have the maximal possible number of minimal generators; see \Cref{reg}.

\begin{Construction}\label{MaxLengthClique}
    Adopt \Cref{LadderSetting}. Let $\bda^1 \coloneq \bdu= (u_1,\ldots,u_{n},1)\in \calL \times [r]$ and $H_0 \coloneq 0$. 
  For each $k \ge 1$:
       \begin{enumerate} 
       \item Define \begin{align*}
          P_k \coloneq & \left\{q \in [n-1] : \bda^{1+H_{k-1}}_{q+1}-\bda^{1+H_{k-1}}_{q}>1, \, \bda^{1+H_{k-1}}_{q}< v_q \right\} \\ &\cup \left \{n : \bda^{1+H_{k-1}}_{n}<m \right \}\cup \left\{n+1: \bda^{1+H_{k-1}}_{n+1}<r \right\}.
      \end{align*}
      If $P_k=\emptyset$, then the process stops.  If $P_k\neq \emptyset$, then write $P_k=\{p_{k,1}<p_{k,2}<\cdots <p_{k,{|P_k|}}\} $ and set $H_{k} \coloneq \sum_{s=1}^{k} |P_s|$. 
      \item  For each $i\in [|P_k|],$ let $\bda^{i+1+H_{k-1}}=\bda^{i+H_{k-1}}+\bde_{p_{k,i}}$.
 \end{enumerate}

This process will terminate as $\bda^1 \in \calL \times [r]$ and $\calL\times [r]$ is a finite set. Let $\alpha \coloneq \max\{k: P_k\neq \emptyset\}$ and write $\calA^r\coloneq [\bda^1, \ldots, \bda^{\lambda}]$ where $\lambda=H_{\alpha}+1$. Let  $P^{\calA^r}=(P_1, \ldots, P_{\alpha})$. 

\end{Construction}

\begin{Lemma} 
Let $\calA^r=[\bda^1, \ldots, \bda^{\lambda}]$ be as in \Cref{MaxLengthClique}. Then $\calA^r\in \MC(\calG^r)$ and in particular, $\lambda=\ell(M)$. 
\end{Lemma}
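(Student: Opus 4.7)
The plan is to reduce the claim to showing that $\calA^r$ is a clique of cardinality $\ell(M)$, which suffices because \Cref{analyticSpread} guarantees that every maximal clique of $\calG^r$ has that same cardinality. Concretely I would establish, in order: (i) each $\bda^i$ lies in $\calL \times [r]$; (ii) $[\bda^1,\ldots,\bda^\lambda]$ is a clique in $\calG^r$; and (iii) $\lambda = \ell(M)$.

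For (i) I would induct on $i$. The key observation is that within a single round $k$ the entries of $P_k$ are distinct and processed in increasing order $p_{k,1}<p_{k,2}<\cdots<p_{k,|P_k|}$, so no coordinate is bumped twice in the round, and for the step $j=H_{k-1}+i$ with $\bda^{j+1}=\bda^j+\bde_{p_{k,i}}$, the value $\bda^j_{p_{k,i}}$ agrees with $\bda^{1+H_{k-1}}_{p_{k,i}}$. Writing $p=p_{k,i}$, I then check (a) $\bda^{j+1}_p \le v_p$ when $p\in[n]$ (and $\bda^{j+1}_{n+1}\le r$ when $p=n+1$), and (b) $\bda^{j+1}_q < \bda^{j+1}_{q+1}$ for $q\in\{p-1,p\}\cap[n-1]$. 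Item (a) is exactly the clauses $\bda^{1+H_{k-1}}_p<v_p$, $\bda^{1+H_{k-1}}_n<m$, or $\bda^{1+H_{k-1}}_{n+1}<r$ built into the definition of $P_k$. For (b) at $q=p$, the slack clause $\bda^{1+H_{k-1}}_{p+1}-\bda^{1+H_{k-1}}_{p}>1$ from $p\in P_k$ combined with $\bda^j_{p+1}=\bda^{1+H_{k-1}}_{p+1}$ (since coordinate $p+1$ has not yet been bumped in round $k$, as every earlier bump is at a position $\le p$) yields $\bda^j_p+1<\bda^j_{p+1}$. Item (b) at $q=p-1$ reduces to $\bda^j_{p-1}<\bda^j_p$, which holds by the inductive hypothesis.

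Step (ii) is essentially free once (i) is in hand: $\bda^{i+1}=\bda^i+\bde_{p_i}$ gives $\bda^h\le \bda^k$ coordinatewise for all $h\le k$, so $\min\{\bda^h,\bda^k\}=\bda^h$, and \Cref{cliqueproperties} identifies the edges of $\calG$ precisely by this condition. For (iii) I claim $\bda^\lambda=\bdv=(v_1,\ldots,v_n,r)$. The process terminates at $P_{\alpha+1}=\emptyset$ with $\bda^\lambda=\bda^{1+H_\alpha}$, so the definition of $P_{\alpha+1}$ forces $\bda^\lambda_n=v_n=m$, $\bda^\lambda_{n+1}=r$, and for each $q\in[n-1]$ either $\bda^\lambda_q=v_q$ or $\bda^\lambda_{q+1}-\bda^\lambda_q=1$. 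A downward induction on $q$ starting from $q=n$ then shows $\bda^\lambda_q=v_q$ for all $q\in[n]$: if $\bda^\lambda_q<v_q$, then $\bda^\lambda_{q+1}=\bda^\lambda_q+1\le v_q<v_{q+1}=\bda^\lambda_{q+1}$, using $v_q<v_{q+1}$ from \Cref{LadderSetting}, a contradiction. Counting increments gives $\lambda-1=\sum_{i=1}^n(v_i-u_i)+(r-1)=\sum_{i=1}^n \Delta_i + r - 1$, which equals $\ell(M)-1$ by \Cref{analyticSpread}, so $\lambda=\ell(M)$ and $\calA^r$ is a maximal clique.

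The main obstacle is the careful bookkeeping in (i), namely tracking which coordinates have been bumped earlier in round $k$ and verifying that the intermediate vectors still belong to $\calL\times[r]$. The increasing-order processing of $P_k$ is essential here: it is precisely what guarantees that $\bda^j_{p+1}$ has not yet been touched when we bump $p$, so the slack inequality defining membership in $P_k$ still applies at the moment the bump is performed.
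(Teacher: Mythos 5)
Your argument is correct, and it reaches maximality by a genuinely different route than the paper. The paper also first shows $\bda^{\lambda}=\bdv$ (by the same iteration on the clause $\bda^{\lambda}_{q+1}-\bda^{\lambda}_q=1$ that you run as a downward induction) and then cliqueness via \Cref{cliqueproperties}; but for $\lambda=\ell(M)$ it argues by contradiction: assuming $\lambda<\ell(M)$, it extends $\calA^r$ to a maximal clique, notes any added tuples must sit strictly between two consecutive $\bda^i, \bda^{i+1}$, and invokes \Cref{lem:conse} to write $\bde_p=\bde_q+\sum_{i=1}^{k}\bde_{q_i}$ with $k\ge 1$, which is absurd. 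You instead count increments: each step adds exactly one standard basis vector, so the displacement from $\bdu$ to $\bdv$ forces $\lambda-1=\sum_{i=1}^{n}\Delta_i+(r-1)=\ell(M)-1$ by \Cref{analyticSpread}, and then the equicardinality of maximal cliques (also from \Cref{analyticSpread}) upgrades the clique to a maximal one. Your counting argument is shorter and avoids any dependence on \Cref{lem:conse}, at the mild cost of leaning twice on \Cref{analyticSpread}; since that theorem is proved in Section~3 independently of \Cref{MaxLengthClique}, there is no circularity. A further point in your favor: you explicitly verify that every intermediate tuple lies in $\calL\times[r]$, using that the positions in a round $P_k$ are processed in increasing order, so coordinate $p+1$ is untouched when $p$ is bumped and the slack inequality recorded at the start of the round still applies; the paper treats this well-definedness of the construction as implicit, so your bookkeeping in step (i) fills a gap the paper passes over silently rather than duplicating its text.
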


\begin{proof} 
Let $\alpha=\max\{k: P_{k}\neq \emptyset\}$ and $\lambda=H_{\alpha}+1$ as in \Cref{MaxLengthClique}. We first claim that $\bda^{\lambda}=\bdv =(v_1, \ldots, v_n,r)$ as in \Cref{cliqueproperties}~\ref{rem:first and last in every clique}. Since $P_{\alpha+1}=\emptyset$, then  
$\bda^{\lambda}_{n+1}=r$, $\bda^{\lambda}_n=m=v_n$. 
Suppose $\bda^{\lambda}\neq \bdv$. Then there exists $q\in[n-1]$ such that $\bda^{\lambda}_q\neq v_q$. By \Cref{MaxLengthClique} we must have that $\bda^{\lambda}_{q+1}-\bda^{\lambda}_q=1$ and $\bda^{\lambda}_q<v_q$. 
Therefore, $\bda^{\lambda}_{q+1}=\bda^{\lambda}_q+1<v_q+1\le v_{q+1}$. Since $P_{\alpha+1}=\emptyset$, then $\bda^{\lambda}_{q+2}-\bda^{\lambda}_{q+1}=1$ as $\bda^{\lambda}_{q+1}<v_{q+1}$. Iterating the previous argument, we eventually 
obtain that $\bda^{\lambda}_{n}<v_n$, which is a contradiction as $\bda^{\lambda}_n=v_n$. Thus $\bda^{\lambda}=\bdv$. 

Now, notice that by construction it follows that $\bda^i_p\le \bda^j_p$ for all $i\le j$, $i,j \in [\lambda]$, and for all $p\in [n+1]$. Therefore, by \Cref{cliqueproperties}~\ref{rem:tuple inequalities},
$\calA^r$ is a clique of $\calG^r$. It remains to show that it is a maximal clique or equivalently, that $\lambda=\ell(M)$; see \Cref{analyticSpread}.  Since $\calA^r$ is a clique, then $\lambda \le \ell(M)$.
Suppose $\lambda<\ell(M)$.  Then we can find a collection of $\ell(M)-\lambda$ tuples $\bdb^j \in \calL\times [r]$ that can be added to $\calA_{\calL\times [r]}$ to produce a maximal clique $\bfB$. Therefore, without loss of generality there exist and $k\in [\ell(M)-\lambda]$ $i\in[\lambda]$ such that $ \bda^i>_{\tau} \bdb^1 >_{\tau} \ldots >_{\tau} \bdb^k >_{\tau} \bda^{i+1}$ with $\bda^{i}, \bdb^1, \ldots, \bdb^k, \bda^{i+1} \in  \bfB$. Then by \Cref{lem:conse} there exist $q, q_1, \ldots, q_{k} \in [n+1]$ such that  $\bdb^{1}=\bda^i+\bde_{q}$, $\bdb^{j+1}=\bdb^j+\bde_{q_{j}}$ for all $j\in [k-1]$, and  $\bda^{i+1}=\bdb^{k}+\bde_{q_{k}}$. On the other hand, by \Cref{MaxLengthClique} there exists $p \in [n+1]$ such that $\bda^{i+1}=\bda^i+\bde_p$. In particular,  $$\bde_p=\bda^{i+1}-\bda^{i}=(\bda^{i+1}-\bdb^k)+(\bdb^{k}-\bdb^{k-1})+\ldots +(\bdb^{1}-\bda^{i})=\sum_{i=1}^{k}\bde_{q_i}+\bde_q,$$
which is impossible since $k\ge 1$.
\end{proof}

The sets $P_k$ appearing in \Cref{MaxLengthClique} motivate the following definition.

\begin{Definition}\label{si Index} 
Adopt \Cref{LadderSetting}. 
   For each $\bfB\in \MC(\calG)$, we write the tuple of position changes $P^{\bfB}\coloneq(P_1^{\bfB}, \ldots, P_{\beta}^{\bfB})$, where for each $i\in [\beta]$
   $$P_i^{\bfB}\coloneq\{p_{i,1}<p_{i,2}<\cdots < p_{i, |P_i^{\bfB}|}\}$$ 
  with $p_{k,|P_k^{\bfB}|}\ge p_{k+1,1}$ for all $k\in[\beta-1]$. In other words, each $P_i^{\bfB}$ is a maximal increasing sequence. 
   For each $\bfB\in \MC(\calG)$, let the \emph{sequence index} of $\bfB$ be $\si{\bfB}\coloneq\beta$, that is, $\si{\bfB}$ is the number of maximal increasing sequences in $P^{\bfB}$. 
\end{Definition}

The next example illustrates \Cref{MaxLengthClique} and \Cref{si Index}.

\begin{Example}\label{ex:Construction}
Consider the ladder matrix in \Cref{LadderMatrix} and let $r=2$. Recall that $S_1=[1,5]=[u_1,v_1]$, $S_2=[3,6]=[u_2,v_2]$ and $S_3=[4,9]=[u_3,v_3]$. By \Cref{analyticSpread}, we have $\ell=\ell(M)=4+3+5+2=14$. By \Cref{PL}, we have $P_{\calL \times [r]}=\{1,1,1,1,2,2,2,3,3,3,3,3,4\}$. 
Following \Cref{MaxLengthClique}, let $\bda^1=(1,3,4,1)$ and $H_0=0$. Then $P_1=\{1,3,4\}$ as $u_3-u_2=1$, and $H_1=|P_1|=3$.
    We obtain 
    $$\bda^2=(2,3,4,1),\,\bda^3=(2,3,5,1),\,\bda^4=(2,3,5,2).$$ 
    Next we have $P_2=\{2,3\}$ and $H_2=\sum_{s=1}^2|P_s|=5$.  Notice that we do not include $4$ in $P_2$ as $\bda_2^4=2=r$.
    We obtain 
    $$\bda^5=(2,4,5,2),\,\bda^6=(2,4,6,2).$$ 
    Now we have $P_3=\{1,2,3\}$, $H_3=\sum_{s=1}^3|P_s|=8$, and we obtain $$\bda^7=(3,4,6,2),\,\bda^8=(3,5,6,2),\,\bda^{9}=(3,5,7,2).$$ 
    Continuing on we have $P_4=\{1,2,3\}$ and $H_4=\sum_{s=1}^4|P_s|=11$. We obtain
    $$\bda^{10}=(4,5,7,2),\, \bda^{11}=(4,6,7,2),\, \bda^{12}=(4,6,8,2).$$ 
    Finally, we have $P_5=\{1,3\}$ and $H_5=\sum_{s=1}^5|P_s|=13$. Notice that we do not include $2$ as $\bda^{12}_2=6=v_2$. We obtain,
    $$\bda^{13}=(5,6,8,2),\,\bda^{14}=(5,6,9,2)=\bdv.$$ 
    It is clear that $P_6=\emptyset$. We let $\calA^2=[\bda^1,\ldots,\bda^{14}]$, $\{P_1,\ldots,P_5\}=P_{\calL \times [2]}$, $\si{\calA^2}=5$, and $P^{\calA^2}=(1,3,4,2,3,1,2,3,1,2,3,1,3)$.
\end{Example}

We remark that, in some cases, one can calculate the sequence index of $\calA^r$ without needing to perform the algorithm discussed in \Cref{MaxLengthClique}.

\begin{Observation}\label{Obser:Epsilon_i}
Adopt \Cref{LadderSetting} and recall that $\epsilon_j = u_{j+1} - u_j$ for all $j \in [n-1]$. Let $\calA=\calA^r \in \MC(\calG)$ be the maximal clique defined in \Cref{MaxLengthClique} and write $P^{\calA}=(P_1^{\calA}, \ldots, P_{\si{\calA}}^{\calA})$. 
    \begin{enumerate}[a]
        \item \label{all>2Delta} Suppose $\epsilon_{j}\ge 2$ for all $j\in [n-1]$. Then for all $i\in[n]$ we have   $i\in P^{\calA}_k$ if and only if $k\in [\Delta_i]$. In particular, in this case $\si{\calA}=\max\limits_{i \in [n]}\{\Delta_i, r-1 \}$.
        
        \item \label{All=1Delta} Suppose $\epsilon_j=1$ for all $j\in [n-1]$. Then for all $i\in[n]$, we have $i\in P^{\calA}_k$ if and only if $k\in [n-i+1,\Delta_i+n-i]$. In particular, in this case
        $\si{\calA}=\max\limits_{i \in [n]}\{\Delta_i+n-i, r-1 \}$.
    \end{enumerate}
\end{Observation}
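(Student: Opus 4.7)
The plan is to use induction on $k$ to compute the sets $P_k^{\calA}$ explicitly in each case. For each $i \in [n+1]$ and each $k \geq 1$, let $f_i(k) \coloneq |\{j \in [k-1] : i \in P_j^{\calA}\}|$ denote the number of times position $i$ has been incremented before step $k$. Since in \Cref{MaxLengthClique} the positions in $P_k^{\calA}$ are applied in increasing order (one at a time), we have $\bda^{1+H_{k-1}}_i = u_i + f_i(k)$ for $i \in [n]$ and $\bda^{1+H_{k-1}}_{n+1} = 1 + f_{n+1}(k)$. Because the coordinate $n+1$ is governed only by the condition $\bda^{1+H_{k-1}}_{n+1}<r$, in both cases one immediately gets $n+1 \in P_k^{\calA}$ iff $k \in [r-1]$, contributing $r-1$ to $\si{\calA}$. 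So it remains to analyze the positions $i \in [n]$.

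For part \ref{all>2Delta}, under the assumption $\epsilon_j \ge 2$ for all $j \in [n-1]$, I would prove by induction on $k$ that $f_i(k)=\min(k-1,\Delta_i)$ for all $i\in[n]$; equivalently, $i \in P_k^{\calA}$ iff $k \in [\Delta_i]$. The base case $k=1$ is immediate from $\bda^1=(u_1,\ldots,u_n,1)$ since $u_{i+1}-u_i=\epsilon_i\ge 2>1$. For the inductive step, using the induction hypothesis one computes
\[
\bda^{1+H_{k-1}}_{i+1}-\bda^{1+H_{k-1}}_i \;=\; \epsilon_i+\min(k-1,\Delta_{i+1})-\min(k-1,\Delta_i).
\]
A short case analysis, together with the inequality $v_{i+1}>v_i$ from \Cref{LadderSetting}, shows that this difference always stays $\geq 2$ while position $i$ is not yet saturated, so $i\in P_k^\calA$ is governed entirely by the saturation condition $\bda^{1+H_{k-1}}_i<v_i$, which is equivalent to $k\le\Delta_i$. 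Taking the maximum over $i\in[n]$ and $r-1$ then yields $\si{\calA}=\max_{i\in[n]}\{\Delta_i,r-1\}$.

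For part \ref{All=1Delta}, assume $\epsilon_j=1$ for all $j\in[n-1]$, so $u_i=i$. The key preliminary observation is that under this assumption the numerical condition $\Delta_{i-1}\le\Delta_i+\epsilon_{i-1}-1$ from \Cref{LadderSetting} forces $\Delta_i$ to be \emph{non-decreasing} in $i$. I would then prove by induction on $k$ that $i\in P_k^{\calA}$ iff $k\in[n-i+1,\Delta_i+n-i]$; equivalently, $f_i(k)=\max\bigl(0,\min(k-n+i-1,\Delta_i)\bigr)$. In the base case $k=1$, every adjacent difference is exactly $1$, so only $i=n$ can enter (provided $\Delta_n\ge 1$), matching the formula. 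For the inductive step, the identity
\[
\bda^{1+H_{k-1}}_{i+1}-\bda^{1+H_{k-1}}_i \;=\; 1+f_{i+1}(k)-f_i(k)
\]
reduces the ``gap $>1$'' condition to $f_{i+1}(k)>f_i(k)$. Plugging in the inductive formula, this strict inequality holds precisely when $k\ge n-i+1$, where the monotonicity $\Delta_{i+1}\ge\Delta_i$ is exactly what is needed to prevent $f_{i+1}(k)$ from saturating before $f_i(k)$ does. The saturation condition $\bda^{1+H_{k-1}}_i<v_i$ gives $k\le\Delta_i+n-i$, and combining the two yields the stated interval. The sequence index is then $\si{\calA}=\max_{i\in[n]}\{\Delta_i+n-i,\,r-1\}$.

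The main obstacle in both parts is the careful case analysis around saturation: once position $i$ reaches $v_i$ (or $i+1$ reaches $v_{i+1}$), the formulas for $f_i(k)$ and $f_{i+1}(k)$ change regime, and one must verify that the ``gap $>1$'' and ``strict inequality $<v_i$'' conditions behave consistently. In part \ref{all>2Delta} this is handled by the strict strict inequality $v_{i+1}>v_i$; in part \ref{All=1Delta} it is handled by the monotonicity $\Delta_i\le\Delta_{i+1}$. No further input beyond \Cref{LadderSetting} and \Cref{MaxLengthClique} is required.
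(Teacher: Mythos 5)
Your proof is correct. Note that the paper supplies no argument for this statement at all: it is labeled an Observation and treated as an immediate consequence of \Cref{MaxLengthClique}, so there is no official proof to diverge from. The closest analogues in the paper are \Cref{general facts for intervals} and \Cref{thm: si extended formula}, which later establish more general interval descriptions (for mixed patterns of the $\epsilon_i$'s) by inductively determining, position by position, the set of rounds $k$ with $i\in P_k^{\calA}$; your argument is the same inductive tracking made fully explicit through the counters $f_i(k)$. The identity $\bda^{1+H_{k-1}}_i=u_i+f_i(k)$ converts both the gap condition and the saturation condition of \Cref{MaxLengthClique} into arithmetic with $\min$ and $\max$, and you correctly isolate the single numerical input needed in each regime: in part \ref{all>2Delta}, when $k-1>\Delta_{i+1}$ the gap equals $\epsilon_i+\Delta_{i+1}-(k-1)\ge\epsilon_i+\Delta_{i+1}-\Delta_i+1$, and $v_{i+1}>v_i$ is exactly the inequality $\epsilon_i+\Delta_{i+1}-\Delta_i\ge 1$ that keeps it above $1$; in part \ref{All=1Delta}, the monotonicity $\Delta_i\le\Delta_{i+1}$ (forced by $\Delta_{i-1}\le\Delta_i+\epsilon_{i-1}-1$ when $\epsilon_{i-1}=1$) is precisely what prevents $f_{i+1}$ from saturating before $f_i$, so the gap condition $f_{i+1}(k)>f_i(k)$ holds on the whole interval $[n-i+1,\Delta_i+n-i]$.

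One step you leave implicit, as does the paper: concluding $\si{\calA}=\max\{k: P_k\ne\emptyset\}$ requires that the rounds of \Cref{MaxLengthClique} coincide with the maximal increasing runs of \Cref{si Index}, i.e., that $\max P_k\ge\min P_{k+1}$ for consecutive nonempty rounds. This is immediate in your setting --- in part \ref{all>2Delta} the sets $P_k$ are nested decreasing, and in part \ref{All=1Delta} any $i\in P_{k+1}\setminus P_k$ with $i<n$ has $k=n-i$ and $\Delta_{i+1}\ge\Delta_i\ge 1$, so $i+1\in P_k$ --- but it deserves a sentence, since without it the count of rounds does not yet compute the sequence index.
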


In the following proposition we prove that, for any maximal clique $\bfB\in \MC(\calG)$, the sequence index $\si{\bfB}$ allows us to calculate the number of minimal generators of the linear quotient ideal $Q_{\bfB}$, discussed in \Cref{goaltoShowlinear}.  

\begin{Proposition}\label{pdLowA}
Adopt \Cref{LadderSetting} and let $\ell= \ell(M)$. For any  maximal clique $\bfB\in\MC(\calG)$ with $P^{\bfB}=(P_1^{\bfB}, \ldots, P_{\si{\bfB}}^{\bfB})$, we have $\mu(Q_{\bfB})=\sum_{s=1}^{\si{\bfB}} (|P_{s}^{\bfB}|-1)=\ell-1-\si{\bfB}$. 
\end{Proposition}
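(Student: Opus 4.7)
The plan is to combine \Cref{cor: linear quot}, which identifies the minimal generators of $Q_{\bfB}$, with the combinatorial decomposition of the position-change tuple $P^{\bfB}$ into its maximal increasing subsequences.

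First, I would apply \Cref{cor: linear quot} to get that $Q_{\bfB} = \langle T_{\bdb^s} : s \in [2,\ell-1],\ \bdb^s \text{ satisfies the LQ condition}\rangle$. Since the generators are distinct variables of $T$, they form a minimal generating set, so
\[
\mu(Q_{\bfB}) = \#\bigl\{s \in [2,\ell-1] : \bdb^s \text{ satisfies the LQ condition}\bigr\}.
\]
Next, using \Cref{lem:conse} I would write $P^{\bfB} = (p_1, \ldots, p_{\ell-1})$ with $\bdb^{i+1} - \bdb^i = \bde_{p_i}$. Unpacking \Cref{LQCondition}, $\bdb^s$ satisfies the LQ condition precisely when $p_{s-1} < p_s$, so the problem reduces to counting the number of strict ascents in the tuple $P^{\bfB}$.

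Now I would invoke the decomposition $P^{\bfB} = (P_1^{\bfB}, \ldots, P_{\si{\bfB}}^{\bfB})$ from \Cref{si Index}. By definition each block $P_i^{\bfB}$ is a maximal strictly increasing run, while consecutive blocks meet at a descent (i.e., $p_{k,|P_k^{\bfB}|} \ge p_{k+1,1}$). Therefore the strict ascents of $P^{\bfB}$ occur precisely at the internal positions of each run, yielding exactly $|P_i^{\bfB}|-1$ ascents inside the $i$-th block. Summing,
\[
\mu(Q_{\bfB}) = \sum_{s=1}^{\si{\bfB}} \bigl(|P_s^{\bfB}|-1\bigr).
\]
Finally, since $P^{\bfB}$ has length $\ell - 1$, we have $\sum_{s=1}^{\si{\bfB}}|P_s^{\bfB}| = \ell - 1$, giving the second equality $\mu(Q_{\bfB}) = \ell - 1 - \si{\bfB}$.

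There is no serious obstacle here: the statement amounts to bookkeeping once the LQ condition is reinterpreted as an ascent in $P^{\bfB}$ and matched against the definition of $\si{\bfB}$ as the number of maximal increasing runs. The only care needed is to confirm that the variable generators listed in \Cref{cor: linear quot} are genuinely a minimal (not just a) generating set, which is automatic since distinct $\bdb^s \in \bfB$ index distinct indeterminates of $T$.
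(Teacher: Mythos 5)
Your proposal is correct and follows essentially the same route as the paper: both count the elements $\bdb^s$ satisfying the LQ condition from \Cref{cor: linear quot} by identifying them with the strict ascents $p_{s-1}<p_s$ inside the maximal increasing runs $P_1^{\bfB},\ldots,P_{\si{\bfB}}^{\bfB}$, yielding $|P_k^{\bfB}|-1$ generators per run, and then use $\sum_{s}|P_s^{\bfB}|=\ell-1$ for the final equality. Your explicit remark that the generators are distinct variables of $T$ (hence a minimal generating set) is a point the paper leaves implicit, but the substance of the argument is identical.
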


\begin{proof} 
Write $\bfB=[\bdb^1, \ldots, \bdb^{\ell}]$ and let $P_k=P_k^{\bfB}=\{p_{k,1}< \cdots < p_{k, |P_k|}\}$ for all $k\in [\beta]$, where $\beta=\si{\bfB}$. Let $J_0=0$ and $J_k=\sum_{s=1}^{k}|P_s|$. 
 Fix $k\in[\beta]$ and let $i\in[2,|P_k|]$. Then
    \begin{align*}
        \bdb^{i+J_{k-1}}=\bdb^{i-1+J_{k-1}}+\bde_{p_{k,{i-1}}}\quad \text{ and } \quad \bdb^{i+1+J_{k-1}}=\bdb^{i+J_{k-1}}+\bde_{p_{k,{i}}}. 
    \end{align*} 
    Since $p_{k,{i-1}}<p_{k,{i}}$, then $\bdb^{i+J_{k-1}}$ satisfies the LQ condition as in \Cref{LQCondition}.
 On the other hand, for each  $k\in [\beta]$, $\bdb^{1+J_k}$ does not satisfy LQ condition as $p_{{k-1},{|P_{k-1}|}}\ge p_{k,1}$ by the definition of strictly increasing sequences. Hence we have exactly $\sum_{k=1}^{\beta} (|P_{k}|-1)$ elements that satisfy the LQ condition. Therefore by \Cref{cor: linear quot}, 
 $$Q_{\bfB}=\langle \bdb^{i+J_{k-1}}: k\in [\beta], i\in[2, |P_k|]\rangle$$
 and hence $\mu(Q_{\bfB})=\sum_{k=1}^{\beta} (|P_{k}|-1)$.

Now, notice that $|\bfB|=\ell$ and since we have $\beta$ strictly increasing sequences in $P^{\bfB}$,  there are exactly $\beta+1$ elements $\bdb^i \in \bfB$ such that the variable $T_{\bdb^i}$ does not appear in the list of generators of $Q_{\bfB}$. Therefore, $\mu(Q_{\bfB})=\ell-1-\beta$. On the other hand, we also know that $\ell=J_{\beta}+1=1+\sum_{k=1}^{\beta}|P_{k}|$ and therefore the conclusion follows.
\end{proof}

Our next goal is to show that $\max \{ \mu(Q_{\bfB}) : \bfB \in \MC(\calG^r)\}$ is in fact $\mu(Q_{\calA^r})$. Equivalently, we will prove that $\calA^r$ has the smallest possible sequence index. To achieve this, a key step will be to combine distinct maximal cliques to create a new maximal clique, while keeping the sequence indices under control. The following remark provides several criteria for a sequence of $\ell$-tuples to form a maximal clique.

\begin{Remark}\label{proving Strategy} 
Adopt \Cref{LadderSetting}. Let $\ell=\ell(M)$ and let $P=(p_1,\ldots, p_{\ell-1})$ be a tuple such that $\{p_1,\ldots,p_{\ell-1}\}$ coincides with $P_{\calL \times [r]}$.
Let $\bdb^1=\bdu=(u_1,\ldots, u_n,1)$ and for all $i\in [\ell-1]$ set $\bdb^{i+1}=\bdb^{i}+\bde_{p_i}$. Then, for $\bfB=[\bdb^1, \ldots, \bdb^{\ell}]$, the following are equivalent: 
\begin{enumerate}[a]
    \item $\bfB\in \MC(\calG)$ with $P^{\bfB}=P$;
    \item for all $i \in [\ell-1]$, $\bdb^{i+1}\in \calL \times [r]$;
    \item for all $i \in [\ell-1]$: if $p_i=n+1$, then $\bdb^{i}_{p_i}<r$; if $p_i=n$, then $\bdb^{i}_{p_i}<v_n$; and if $p_i<n$, then $\bdb^{i}_{p_i+1}-\bdb^{i}_{p_i}>1$ and $\bdb^{i}_{p_i}< v_{p_i}$; 
    \item \label{jth i-1 after jth i}  for all $i\in[2,n]$ and all $j\in [\Delta_{i-1}-\epsilon_{i-1}+1]$, the $j$-th entry of $i$ in $P$ has to be placed before the $(j+\epsilon_{i-1}-1)$-th entry of $i-1$.
\end{enumerate}
 In regards to part \ref{jth i-1 after jth i}, write  $P=(P_1, \ldots, P_{\beta})$ so that each $P_i$ is a maximal increasing sequence of elements in $P$ similarly as in \Cref{si Index}. For every $i \in [2,n]$, write 
 $\{k \in [\beta]: i\in P_{k} \}=\{k_1 < \cdots < k_{\Delta_i}\}$ and let $\{s \in [\beta]: i-1\in P_{s}\}=\{s_1< \cdots< s_{\Delta_{i-1}}\} $.
 Then $\bfB $ is a maximal clique if and only if $s_{j+\epsilon_{i-1}-1}> k_j$ for all $j\in [\Delta_{i-1}-\epsilon_{i-1}+1]$, since $\Delta_{i-1}\le \Delta_i+\epsilon_{i-1}-1$.
\end{Remark}

We now prove that an arbitrary maximal clique $\bfB$ can be combined in an appropriate way with the maximal clique $\calA^r$ defined in \Cref{MaxLengthClique} to produce another maximal clique.

\begin{Lemma} \label{lem:shift left}
   Adopt \cref{LadderSetting} and let $\ell=\ell(M)$. 
   Let $\calA^r$ be the maximal clique defined in \Cref{MaxLengthClique} and let $\bfB \in \MC(\calG)$ with $\bfB \neq \calA^r$. Write $P^{\calA^r}=(p_1, \ldots, p_{\ell-1})$ and $P^{\bfB}=(q_1, \ldots, q_{\ell-1})$.
   Let $d=\min \{k : p_k\neq q_k\}$ and let $j=\min\{i>d: q_i=p_d \}$.   
   Then there exists a maximal clique $\bfC \in \MC(\calG)$ so that 
     $$P=(p_1, \ldots, p_{d-1}, p_d, q_d, \ldots, \widehat{q_{j}}, \ldots, q_{\ell-1}) = P^{\bfC}.$$
\end{Lemma}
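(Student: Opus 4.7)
The plan is to define the candidate maximal clique $\bfC=[\bdc^1,\dots,\bdc^\ell]$ directly by setting
\[ \bdc^i=\bdb^i \ \text{ for } i\in [1,d]\cup [j+1,\ell], \qquad \bdc^i=\bdb^{i-1}+\bde_{p_d} \ \text{ for } i\in [d+1,j], \]
and then verifying that $\bfC\in \MC(\calG)$ with $P^{\bfC}=P$ via \Cref{proving Strategy}. Since $q_k=p_k$ for $k<d$, we have $\bdc^{d+1}=\bda^d+\bde_{p_d}=\bda^{d+1}$, and since $q_j=p_d$,
\[ \bdc^{j+1}-\bdc^j=\bdb^{j+1}-\bdb^{j-1}-\bde_{p_d}=\bde_{q_{j-1}}+\bde_{q_j}-\bde_{p_d}=\bde_{q_{j-1}}. \]
A straightforward telescoping check then shows that all successive differences $\bdc^{i+1}-\bdc^i$ equal $\bde_{P_i}$, so $P^{\bfC}=P$ as claimed.

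By part (b) of \Cref{proving Strategy}, it remains to verify that $\bdc^i\in \calL\times[r]$ for every $i\in [\ell]$. The only nontrivial range is $i\in [d+1,j]$, and for this it suffices to prove that $p_d$ is a valid position change at $\bdb^{i-1}$ in the sense of part (c) of \Cref{proving Strategy}; equivalently, that $\bdb^{i-1}+\bde_{p_d}\in \calL\times[r]$. The central observation is that, by the minimality of $j$, one has $q_s\neq p_d$ for every $s\in [d,j-1]$; combined with $q_s=p_s$ for $s<d$, this yields
\[ \bdb^{i-1}_{p_d}=\bda^d_{p_d} \quad \text{for every } i\in [d+1,j], \]
while monotonicity of the running coordinate counts gives $\bdb^{i-1}_{p_d+1}\ge \bda^d_{p_d+1}$.

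The main obstacle is then to translate the validity of $p_d$ at $\bda^d$---which holds because $\bda^{d+1}=\bda^d+\bde_{p_d}\in \calL\times[r]$ by the construction of $\calA^r$ in \Cref{MaxLengthClique}---into the validity of $p_d$ at $\bdb^{i-1}$. Concretely, we have $\bda^d_{p_d}<v_{p_d}$ (or $<r$ when $p_d=n+1$), and $\bda^d_{p_d+1}-\bda^d_{p_d}>1$ when $p_d<n$. Via the identifications above, these become $\bdb^{i-1}_{p_d}<v_{p_d}$ and $\bdb^{i-1}_{p_d+1}-\bdb^{i-1}_{p_d}>1$, which are exactly the conditions of part (c) of \Cref{proving Strategy}. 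Hence $\bdc^i\in \calL\times[r]$ for all $i\in[\ell]$, and $\bfC\in \MC(\calG)$, completing the proof.
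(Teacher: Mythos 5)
Your proof is correct and takes essentially the same route as the paper: both construct $\bfC$ with the intermediate tuples $\bdc^i=\bdb^{i-1}+\bde_{p_d}$ for $i\in[d+1,j]$, use the minimality of $j$ (together with $q_d\neq p_d$, which comes from the definition of $d$) to obtain $\bdb^{i-1}_{p_d}=\bda^d_{p_d}$, and transfer the validity of the position change $p_d$ at $\bda^d$, guaranteed by \Cref{MaxLengthClique}, to $\bdb^{i-1}$ via the criteria of \Cref{proving Strategy}. The only detail the paper records that you elide is the well-definedness of $j$, which holds because $p_d$ occurs in $P^{\bfB}$ with the same multiplicity as in $P^{\calA^r}$, so some occurrence must appear after index $d$.
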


\begin{proof}
Using the setting as in the statement, notice that since $p_d \in P_{\calL \times [r]}$ with multiplicity $\Delta_{p_d}$, there must exist $i>d$ such $q_i=p_d$. 
Then $j=\min\{i>d: q_i=p_d\}$ is well defined. 

Let $\bfC=[\bdc^1, \ldots, \bdc^{\ell}]$, where $\bdc^1=\bdu$ and 
$$\bdc^{i+1}= \begin{cases}
\bdc^{i}+\bde_{p_i} & \text{ if } i\in [d] \\
\bdc^{i}+\bde_{q_{i-1}} & \text{ if } i\in [d+1,j]\\
\bdc^{i}+\bde_{q_{i}} & \text{ if } i\in [j+1,\ell-1].
\end{cases}$$
We claim that $\bfC$ is a maximal clique of $\calG$ with $P^{\bfC}=P$. If $P=P^{\calA^r}$, then $\bfC=\calA^r$ and we are done. Hence, we may assume that $P \neq P^{\calA}$. 
Write $\calA=[\bda^1, \ldots, \bda^{\ell}]$ and $\bfB=[\bdb^1, \ldots, \bdb^{\ell}]$.
It is clear that $d \geq 2$, since $\bda^1=\bdb^1=\bdu$, as in \Cref{cliqueproperties}. Moreover, by the choice of $d$ we have $\bda^i=\bdb^i=\bdc^{i} \in \calL\times [r]$ for all $i\le d$.  On the other hand, for all $i\in [j+1, \ell]$ we have $\bdc^{i}=\bda^{d}+\bde_{p_d}+\sum_{s=d}^{i-2}\bde_{q_{s}}=\bdb^{d}+\sum_{s=d}^{i-1}\bde_{q_{s}}=\bdb^{i}\in \calL\times [r]$. By \Cref{proving Strategy}, it only remains to show that $\bdc^{i} \in \calL \times [r]$ for all $i \in [d+1,j]$.

Let $i \in [d+1,j]$ and notice that $\bdc^{i}=\bdb^{i-1}+\bde_{p_d}$ and $\bdb^{i-1}_{p_d}=\bdb^d_{p_d}=\bda^d_{p_d}$. If $p_d=n+1$ or $p_d=n$, then   $\bdc^{i}\in \calL \times [r]$, since $\bdb^{i-1}_{p_d}=\bda^d_{p_d}$ and $\bda^{d+1}=\bda^{d}+\bde_{p_d}\in \calL\times[r]$. Suppose now that $p_d\neq n,n+1$. Then  $\bdb^{i-1}_{p_d+1}\ge \bdb^{d}_{p_d+1}=\bda^d_{p_d+1}$, $\bdb^{i-1}_{p_d}= \bdb^{d}_{p_d}=\bda^d_{p_d}$, and therefore, 
\begin{align*}
    \bdb^{i-1}_{p_d+1}-\bdb^{i-1}_{p_d}\ge \bdb^{d}_{p_d+1}-\bdb^{d}_{p_d}=\bda^{d}_{p_d+1}-\bda^{d}_{p_d}\ge 2 \quad \mbox{ and } \quad \bdb^{i-1}_{p_d}=\bda^{d}_{p_d}<v_{p_d},
\end{align*}
since $\bda^{d+1}=\bda^{d}+\bde_{p_d}\in \calL\times [r]$. 
Hence $\bdc^{i}\in \calL\times [r]$ and the proof is complete.
\end{proof}

Using the previous lemma, we can show that $\calA^r$ has the smallest possible sequence index.

\begin{Proposition}\label{best reg ind}
    Adopt \Cref{LadderSetting} and let $\calA^r$ be the maximal clique from \Cref{MaxLengthClique}.
    Then $\si{\calA^r}=\min\{\si{\bfB} : \bfB\in \MC(\calG^r)\}$.
\end{Proposition}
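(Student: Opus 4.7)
The plan is to iterate the shift operation from \Cref{lem:shift left}. Starting from an arbitrary $\bfB \in \MC(\calG^r)$ with $\bfB \ne \calA^r$, each application of the lemma produces some $\bfC \in \MC(\calG^r)$ whose position tuple $P^{\bfC}$ agrees with $P^{\calA^r}$ in strictly more initial entries than $P^{\bfB}$. After finitely many shifts the clique becomes $\calA^r$ itself, so it suffices to establish the monotonicity $\si{\bfC} \le \si{\bfB}$ for a single shift.

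Writing $P^{\bfB} = (q_1, \ldots, q_{\ell-1})$ and using $\si{\bfB} = 1 + \#\{i \in [2, \ell-1] : q_i \le q_{i-1}\}$, the change $\si{\bfC} - \si{\bfB}$ decomposes as a local insertion contribution $[p_d \le q_{d-1}] + [q_d \le p_d] - [q_d \le q_{d-1}]$ at positions $d, d+1$, plus a local deletion contribution $[q_{j+1} \le q_{j-1}] - [q_j \le q_{j-1}] - [q_{j+1} \le q_j]$ at positions $j, j+1$, where $d$ and $j$ are as in \Cref{lem:shift left} and where I use $q_j=p_d$. A four-way enumeration of the signs of $q_j - q_{j-1}$ and $q_{j+1} - q_j$ shows the deletion contribution is always $\le 0$. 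Boundary cases ($d = 1$ or $j = \ell - 1$) are handled by dropping the absent terms.

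For the insertion contribution I would invoke the greedy structure of \Cref{MaxLengthClique}. If $p_d$ starts a new pass $P_k$ of $\calA^r$, then $q_{d-1} = p_{d-1} = \max(P_{k-1})$, and the common state $\bda^d = \bdb^d$ is the initial state of pass $k$, so the admissible $q_d \in P_k$ satisfies $p_d < q_d \le \max(P_k) \le \max(P_{k-1}) = q_{d-1}$. The inequality $\max(P_k)\le\max(P_{k-1})$ follows because any newly admissible position $q$ at the start of pass $k$ has $q+1 \in P_{k-1}$, hence $q < \max(P_{k-1})$. In this case the three indicator terms evaluate to $1+0-1=0$. If instead $p_d = p_{k, i}$ continues pass $k$ with $i \ge 2$, then $q_{d-1} = p_{k, i-1} < p_d$, and any admissible $q_d \ne p_d$ at $\bda^d$ falls into one of three types: positions in $\{p_{k, i+1}, \ldots, p_{k, |P_k|}\}$ (so $q_d > p_d > q_{d-1}$), positions in $\{p_{k, 1}, \ldots, p_{k, i-1}\}$ that remain admissible (so $q_d \le q_{d-1}$, $q_d < p_d$), or newly admissible positions $q_d$ with $q_d + 1 \in \{p_{k, 1}, \ldots, p_{k, i-1}\}$ (so again $q_d < q_{d-1}$, $q_d < p_d$). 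In every subcase the three indicators cancel to $0$. Combining with the deletion bound yields $\si{\bfC} \le \si{\bfB}$. The main technical obstacle is precisely this classification of admissible positions at the intermediate state $\bda^d$ in the continuing-pass case; ruling out any admissible $q_d$ with $p_{d-1} < q_d < p_d$ requires carefully tracking how the ladder admissibility conditions of \Cref{MaxLengthClique} are affected by the partial increments $\bde_{p_{k, 1}}, \ldots, \bde_{p_{k, i-1}}$ already performed within pass $k$.
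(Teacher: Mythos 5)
Your proposal is correct, and at the top level it is the same strategy as the paper's proof: both iterate \Cref{lem:shift left}, show that a single shift does not increase the sequence index, and terminate because the prefix of agreement with $P^{\calA^r}$ strictly grows. The genuine difference is in how the monotonicity step is established. The paper first proves the block-level containment $P_{\delta}^{\bfB}\subsetneq P_{\delta}^{\calA^r}$ at the first block $\delta$ of disagreement (by a three-way case analysis $q=n+1$, $q=n$, $q\in[n-1]$ against the greedy definition of $\calA^r$), deduces $|P_{\delta}^{\calA^r}|\ge 2$, and then argues at the level of whole runs, splitting on $p_d<q_d$ versus $p_d>q_d$. You instead write $\si{\bfB}=1+\#\{i: q_i\le q_{i-1}\}$ and do local indicator bookkeeping, decomposing $\si{\bfC}-\si{\bfB}$ into an insertion and a deletion contribution; the deletion is always $\le 0$ by a four-way sign check (this matches the paper's one-line remark that removing $q_j$ cannot increase the number of runs), while the insertion is shown to be exactly $0$ by classifying the admissible moves at the common state $\bda^d=\bdb^d$, split by whether $p_d$ starts or continues a greedy pass. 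Your two structural facts --- a newly admissible position $q$ forces $q+1\in P_{k-1}$, hence $\max P_k\le\max P_{k-1}$, and moves at positions $\le p_{k,i-1}$ leave coordinates $q,q+1$ untouched for $q>p_{k,i-1}$, so nothing admissible lies strictly between $p_{k,i-1}$ and $p_{k,i}$ --- are a finer-grained substitute for the paper's containment $P_{\delta}^{\bfB}\subsetneq P_{\delta}^{\calA^r}$, and they buy two things: the sharper conclusion that all of the decrease in the sequence index comes from the deletion, and an explicit treatment of the boundary case $d=1$, which the paper brushes aside with the assertion that $d\ge 2$ is ``clear'' from $\bda^1=\bdb^1=\bdu$ (in fact $q_1\neq p_1$ can occur whenever $|P_1|\ge 2$, so your handling is the more careful one, and your Case A argument covers $d=1$ since then $q_1\in P_1$ forces $q_1>p_1=\min P_1$). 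The paper's route is shorter because the block containment is proved once and reused in both of its cases; your route is longer but more local and self-checking. Both ultimately rest on the same two pillars: \Cref{lem:shift left} and the greedy property of \Cref{MaxLengthClique} that each pass takes every admissible position.
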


\begin{proof}
Let $\calA=\calA^r$, $\calG= \calG^r$ and $\bfB\in \MC(\calG)$.  Let $\ell=\ell(M)$ and write 
 $P^{\calA}=(P_1^{\calA}, \ldots, P_{\alpha}^{\calA})=(p_1, \ldots, p_{\ell-1})$ and
 $P^{\bfB}=(P_{1}^{\bfB}, \ldots, P_{\beta}^{\bfB})=(q_1, \ldots, q_{\ell-1})$  for the tuples of position changes for $\calA$ and $\bfB$, respectively.  Here $\si{\calA}=\alpha$ and $\si{\bfB}=\beta$. We claim that $\alpha\le \beta$. If $\alpha=\beta$, there is nothing to show. Hence we may assume that $\alpha\neq \beta$ and $\calA\neq \bfB$.
 
Let $\delta=\min\{k:P_k^{\calA}\neq P_{k}^{\bfB}\}$, $d=\min\{k:p_k\neq q_k\}$, and notice that $p_d\in P_{\delta}^{\calA}$, $p_d\not\in P^{\bfB}_{\delta}$. It is clear that $d \geq 2$, since $\bda^1=\bdb^1=\bdu$, as in \Cref{cliqueproperties}. Moreover, $\bda^i=\bdb^i$ for all $i\le d$, since $p_k=q_k$ for all $k\in[d-1]$. 
  It then follows that $P_{\delta}^{\bfB}\subsetneq P_{\delta}^{\calA}$. Indeed, suppose instead we can find $q\in P_{\delta}^{\bfB}$ such that $q\notin P_{\delta}^{\calA}$. Then, we have two cases to consider:
    \begin{enumerate}
        \item Suppose $q=n+1$. Since $q\not\in P_{\delta}^{\calA}$, then by the construction of $\calA$, we must have $\delta \ge r$ and $n+1$ appears $r-1$ times in $\{P_1^{\calA},\ldots,P_{\delta-1}^{\calA}\}=\{P_1^{\bfB},\ldots,P_{\delta-1}^{\bfB}\}$. This is a contradiction since this would imply that $n+1$ appears in $P^\bfB$  more than $r-1$ times. 
        \item Suppose $q\neq n+1$.  If $q=n$, then $q = q_j$ for some $j \geq d$, and hence $\bdb^{d}_n<v_n$. Since  $\bdb^{d}=\bda^{d}$, then by the construction of $\calA$ we must have $q=n\in P^{\calA}_{\delta}$, a contradiction. 
        Hence we may assume now that $q\in [n-1]$. Since $q\notin P^{\calA}_{\delta}$, we must have either $\bda^d_{q+1}-\bda^d_{q}=1$ or $\bda^d_{q}=v_{q}$. This is equivalent to $\bdb^d_{q+1}-\bdb^d_{q}=1$ or $\bdb^d_{q}=v_{q}$, which shows that $q\notin P^{\bfB}_{\delta}$, a contradiction.
        \end{enumerate}
Therefore, $P_{\delta}^{\bfB}\subsetneq P_{\delta}^{\calA}$ as claimed. Also, since $P_{\delta}^{\bfB}\neq \emptyset$ and $p_d \in  P_{\delta}^{\calA} \backslash P_{\delta}^{\bfB}$, we obtain that $|P_{\delta}^{\calA}|\ge 2$. 

Now, in order to prove that $\alpha \le \beta$, we iteratively construct a sequence of maximal cliques $\bfB_i$ such that $\si{\bfB_i}\le \beta$ for every $i$ and $\bfB_i= \calA$ for some $i$. 
First, by \Cref{lem:shift left} there exists a maximal clique $\bfB_1\in \MC(\calG)$ such that 
  $$P^{\bfB_1}=(p_1, \ldots, p_{d-1}, p_d, q_d, \ldots, \widehat{q_{j}}, \ldots, q_{\ell-1}),$$
where $j=\min\{i>d:q_i=p_d\}$. Write $P^{\bfB_1}=(P_1^{\bfB_1}, \ldots, P_{\beta_1}^{\bfB_1})$, where $\beta_1=\si{\bfB_1}$. We show that $\beta_1\le \beta$.

Notice that, by the definitions of $\delta$ and $d$, we have $P_{k}^{\bfB_1}=P_{k}^{\bfB}=P_{k}^{\calA}$ for all $k \in [\delta-1]$. 
Moreover, removing $q_j$ from $P^{\bfB}$ does not increase the number of strictly increasing sequences in $P^{\bfB_1}$. 
Now, if $p_d<q_d$, then $p_d, q_d\in P_{\delta}^{\bfB_1}$ and thus $\beta_1 \le \beta$. On the other hand, if $p_d>q_d$,  then $p_{d-1}\in P_{\delta}^{\bfB}$ and $q_d\in P_{\delta+1}^{\bfB}$, since  $P_{\delta}^{\bfB} \subsetneq P_{\delta}^{\calA}$ and $|P_{\delta}^{\calA}|\ge 2$. In particular, $p_{d-1},  p_d\in P_{\delta}^{\bfB_1}$ and $q_d\in P_{\delta+1}^{\bfB_1}$, whence $\beta_1\le \beta$ also in this case. 

If $\bfB_1=\calA$, then $\alpha=\beta_1 \le \beta$ as claimed. Otherwise,  $P^{\bfB_1}\neq P^{\calA}$ and by \Cref{lem:shift left} and the argument above we can construct $\bfB_2\in \MC(\calG)$ with $\si{\bfB_2}\le \si{\bfB_1}$. If $\bfB_2=\calA$, then $\alpha=\si{\bfB_2}\le \beta$. Otherwise, we keep applying \Cref{lem:shift left} and the argument above to $\bfB_i$ and $\calA$ to produce $\bfB_{i+1}$ with $\si{\bfB_{i+1}}\le \si{\bfB_i}$. 
Notice that the first index where the entries of $P^{\bfB_1}$ and $P^{\calA}$ are not equal will be strictly bigger than $d$. 
This guarantees that the above process will terminate, producing a maximal clique $\bfB_i=\calA$. Hence there exists an $i\ge 1$ such that $\alpha=\si{\bfB_i}\le \cdots \le \si{\bfB_1}\le \beta$. 
\end{proof}

The previous result reduces the calculation of $\max \{ \mu(Q_{\bfB}) : \bfB \in \MC(\calG^r)\}$ to computing $\si{\calA^r}$. In fact, it will be enough to compute $\si{\calA^1}$, where $\calA^1$ is the maximal clique of the graph $\calG^1$ obtained in \Cref{MaxLengthClique} with $r=1$. This is essentially due to following result, which relates the maximal cliques of $\calG^r$ and those of $\calG^1$.

\begin{Proposition}\label{addn+1}
 Adopt \Cref{LadderSetting}. 
 If $\bfB^r\in \MC(\calG^r)$, then there exists $\bfB^1\in \MC(\calG^1)$ so that $P^{\bfB^1}$ is obtained from $P^{\bfB^r}$ by removing all entries equal to $n+1$ in $P^{\bfB^r}$.  Conversely, for any $\bfB^1\in \MC(\calG^1)$, there exists $\bfB^r\in \MC(\calG^r)$ so that $P^{\bfB^1}$ is obtained from $P^{\bfB^r}$ by removing all the entries equal to $n+1$ in $P^{\bfB^r}$.
\end{Proposition}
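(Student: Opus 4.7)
The plan is to exploit the observation that, by \Cref{PL}, the position changes equal to $n+1$ appear exactly $r-1$ times in any $P^{\bfB^r}$, and these correspond precisely to steps in which only the last (i.e., $r$-index) coordinate is incremented. Thus the transitions with $p_i=n+1$ can be decoupled from the remaining transitions, which affect only the first $n$ coordinates in $\calL$. Concretely, I will use the projection $\pi\col \calL\times[r]\to \calL$ onto the first $n$ coordinates.

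For the forward direction, I would start from $\bfB^r=[\bdb^1,\ldots,\bdb^{\ell^r}]\in \MC(\calG^r)$, where $\ell^r=r+\sum_i\Delta_i$ by \Cref{analyticSpread}, and apply $\pi$ termwise. Using \Cref{lem:conse}, consecutive projections $\pi(\bdb^k)$ and $\pi(\bdb^{k+1})$ either coincide (exactly when $p_k=n+1$) or differ by $\bde_{p_k}$ with $p_k\in[n]$. Collapsing consecutive duplicates leaves a strictly ascending sequence in $\calL$ of length $\ell^r-(r-1)=1+\sum_i\Delta_i=\ell(\calG^1)$ (using $\ell(L)=1+\sum_i\Delta_i$ from \Cref{analyticSpread}). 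The resulting chain starts at $\bdu=(u_1,\ldots,u_n)$ and ends at $\bdv=(v_1,\ldots,v_n)$ and each consecutive difference is some $\bde_q$ with $q\in[n]$; by \Cref{cliqueproperties}\ref{rem:tuple inequalities} and a cardinality count it is a maximal clique $\bfB^1\in\MC(\calG^1)$ whose tuple of position changes is precisely $P^{\bfB^r}$ with the $n+1$'s deleted.

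For the converse, given $\bfB^1=[\bdb'^1,\ldots,\bdb'^{\ell^1}]\in\MC(\calG^1)$, I would simply append the $n+1$ transitions at the end. That is, define
\[
\bdb^k=\begin{cases}(\bdb'^k,1),&k\in[\ell^1],\\(\bdv,\,k-\ell^1+1),&k\in[\ell^1+1,\ell^r].\end{cases}
\]
Each $\bdb^k$ lies in $\calL\times[r]$ because $\bdv\in\calL$ and $k-\ell^1+1\in[1,r]$ in the second range. Consecutive differences are $\bde_{q_k}$ with $q_k\in[n]$ in the first block and $\bde_{n+1}$ in the second, so by \Cref{cliqueproperties} the sequence is a clique, and since its length equals $\ell^r=\ell(M)$, it is maximal. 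Removing the $r-1$ trailing copies of $n+1$ from $P^{\bfB^r}$ recovers $P^{\bfB^1}$, as required.

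The routine content is mostly a matter of counting: checking that the projected (resp.\ extended) sequence has the correct length and that the ladder conditions are preserved under projection. The mild obstacle is making sure that, in the forward direction, the tuples selected after collapsing duplicates lie in $\calL$ and form a strictly increasing chain in $\calL$; this follows immediately because $\pi(\bdb^k)$ already satisfies the strict-inequality condition defining $\calL$ for each $k$, and strict chain monotonicity is preserved by the pigeonhole choice of representatives. No heavier machinery beyond \Cref{lem:conse} and \Cref{cliqueproperties} is needed.
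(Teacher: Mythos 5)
Your proof is correct, but it departs from the paper's argument in both directions, and the differences are worth recording. For the forward direction, the paper removes the $n+1$ entries from $P^{\bfB^r}$ and then verifies via \Cref{proving Strategy} that each remaining move is still permitted; your termwise projection $\pi\colon \calL\times[r]\to\calL$ followed by collapsing duplicates reaches the same conclusion more directly, since each $\pi(\bdb^k)$ automatically lies in $\calL$ and the maximality of the collapsed chain follows from the cardinality count $\ell(M)-(r-1)=\ell(L)$ together with \Cref{analyticSpread}, with no permissibility check needed. For the converse, however, you append all $r-1$ copies of $n+1$ at the end, whereas the paper interleaves them at the ends of the first $\min\{r-1,\gamma\}$ maximal increasing runs of $P^{\bfB^1}$ (padding with singleton runs $\{n+1\}$ if $\gamma<r-1$). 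Both constructions prove the proposition as stated, and yours is simpler; but the paper's choice is deliberate: it produces a clique $\bfB^r$ with $\si{\bfB^r}=\max\{r-1,\si{\bfB^1}\}$, which is exactly what \Cref{cor: si between 1 and r} needs when it invokes the construction ``as in the proof of \Cref{addn+1}.'' Your append-at-the-end construction yields $\si{\bfB^r}=\si{\bfB^1}+r-2$ for $r\ge 2$ (the first appended $n+1$ extends the last increasing run, and each subsequent copy starts its own singleton run), so it would not support that later corollary, though it is perfectly adequate here and for the counting argument in \Cref{CliquesG1}. One small wording caution: in the forward direction, your phrase about ``the pigeonhole choice of representatives'' is unnecessary --- after collapsing, consecutive elements differ by $\bde_{p_k}$ with $p_k\in[n]$ by \Cref{lem:conse}, which already gives strict coordinatewise monotonicity and hence, by \Cref{cliqueproperties}~\ref{rem:tuple inequalities}, that the collapsed sequence is a chain and therefore a clique.
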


\begin{proof}   
Let $\ell = \ell(M)$ and $\bfB^r\in \MC(\calG^r)$. Let $P=P^{\bfB^r}=(p_1,\ldots,p_{\ell-1})=(P_{1}, \ldots, P_{\beta})$ with $\beta=\si{\bfB^r}$.
 By \Cref{positions}, we know that exactly $r-1$ entries of $P^{\bfB^r}$ are equal to $n+1$ and thus there exist indices $i_1, \ldots, i_{r-1}$ such that  $p_{i_j}=n+1$ for all $j \in [r-1]$. 
 Let $P'=(p_1,\ldots,\widehat{p_{i_1}},\ldots, p_j,\ldots, \widehat{p_{i_{r-1}}}, \ldots,p_{\ell-1})$ be the tuple obtained by removing $p_{i_1},\ldots,p_{i_{r-1}}$ from $P^{\bfB^r}$. 

We claim that $P'=P^{\bfB^1}$ for some $\bfB^1\in \MC(\calG^1)$. Notice that for each $i_j$ with $p_{i_j}=n+1$ we have that $p_{i_j+s}< p_{i_j}$, where $s=\min\{k: p_{i_j+k}\in P'\}$. By construction  $p_{i_j+s}\in P'$ implies that $p_{i_j+s}$ was a permitted position change in $\bfB^r$, so the claim follows from \Cref{proving Strategy}.

 Conversely, let $\bfB^1 \in \MC(\calG^1)$ and $P^{\bfB^1}$ be its tuple of position changes. Write $P^{\bfB^1}=(P_1^1, \ldots, P_{\gamma}^{1})$ and $P_i^{1}=\{p_{i,1}< \cdots < p_{i, |P_i^1|}\}$ for each $i\in [\gamma]$, where $\gamma= \si{\bfB^1}$. We create a new tuple $P^r$ by appropriately augmenting $P^{\bfB^1}$ with additional $r-1$ entries, all equal to $n+1$.  
 
 For each $i\in [\min\{r-1, \gamma\}]$, let $P_i^r=\{p_{i,1}<\cdots <p_{i,|P_i^1|}< n+1\}$. If $\gamma=\min\{r-1, \gamma\}$, for each $i \in [\gamma+1, r-1]$ let $P_i^r=\{n+1\}$.  If instead $r-1= \min\{r-1, \gamma\}$, for all $i\in [r, \gamma]$ let $P_i^{r}=P_i^{1}$. 
 Now let $P^r=(P_1^r, \ldots, P_{k}^r)$, where $k=\max\{r-1, \gamma\}$. Then by \Cref{proving Strategy} it is clear that $P^r=P^{\bfB^r}$ for some $\bfB^r\in \MC(\calG^r)$. Thus, $P^{\bfB^1}$ is obtained from $P^{\bfB^r}$ by removing all the entries equal to $n+1$, as claimed.
 \end{proof}

\begin{Corollary}\label{CliquesG1}
     Adopt \Cref{LadderSetting}. For any maximal clique $\bfB^1\in \MC(\calG^1)$ there are exactly $\binom{\ell(L)+r-2}{r-1}$ maximal cliques $\bfB^r\in \MC(\calG^r)$ so that $P^{\bfB^1}$ can be obtained from $P^{\bfB^r}$ by removing all the $n+1$ entries of $P^{\bfB^r}$. 
\end{Corollary}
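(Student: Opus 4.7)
The plan is to establish a bijection between the maximal cliques $\bfB^r \in \MC(\calG^r)$ whose position change tuple yields $P^{\bfB^1}$ after removing all entries equal to $n+1$, and the $(r-1)$-element subsets of $\{1,\ldots,\ell(L)+r-2\}$. The count $\binom{\ell(L)+r-2}{r-1}$ would then follow immediately.

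First, I would observe that by \Cref{analyticSpread} the tuple $P^{\bfB^r}$ has length $\ell(M)-1 = \ell(L)+r-2$, and by \Cref{PL} contains exactly $r-1$ entries equal to $n+1$. If $P^{\bfB^1}$ is to be obtained from $P^{\bfB^r}$ by deleting these $n+1$ entries, then the remaining entries of $P^{\bfB^r}$ are necessarily the entries of $P^{\bfB^1}$ appearing in their original order. Hence $\bfB^r$ is completely determined by the subset of $\{1,\ldots,\ell(L)+r-2\}$ consisting of the positions at which $n+1$'s are inserted, which gives an injection into $\binom{[\ell(L)+r-2]}{r-1}$.

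Next I would show that this map is surjective: any choice of $r-1$ slots among the $\ell(L)+r-2$ positions produces a tuple $P$ whose corresponding cumulative partial sums starting from $\bdu$ yield a valid maximal clique of $\calG^r$. Using the criterion in \Cref{proving Strategy}, I would split the verification according to whether a position change equals $n+1$ or not. For entries $p_i < n+1$, only the first $n$ coordinates of the partial sum are affected; since insertions of $n+1$ do not alter these coordinates, the first $n$ coordinates of the partial sums evolve exactly as in $\bfB^1$, so the constraints are inherited from the fact that $\bfB^1 \in \MC(\calG^1)$. For each insertion of $n+1$, the only requirement from \Cref{proving Strategy} is $\bdb^i_{n+1} < r$; since $\bdb^1_{n+1}=1$ and the $(n+1)$-th coordinate is incremented by $1$ at each of the $r-1$ insertions, before the $k$-th insertion its value is $k \le r-1 < r$, so the constraint holds.

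The main delicate point is precisely this structural independence of the $(n+1)$-th coordinate from the first $n$ coordinates, which is what guarantees that \emph{every} interleaving of the $r-1$ copies of $n+1$ into $P^{\bfB^1}$ yields a valid maximal clique. Once this is verified, the conclusion is the standard combinatorial count of $(r-1)$-subsets of an $(\ell(L)+r-2)$-element set, yielding $\binom{\ell(L)+r-2}{r-1}$.
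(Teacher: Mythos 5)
Your proposal is correct and follows essentially the same route as the paper: both arguments reduce to counting the $\binom{\ell(L)+r-2}{r-1}$ interleavings of $r-1$ copies of $n+1$ into $P^{\bfB^1}$, and both justify that every interleaving is valid via the clique criterion of \Cref{proving Strategy}. The only cosmetic difference is that you verify the stepwise condition \Cref{proving Strategy}(c) directly (noting the $(n+1)$-th coordinate is bounded by $r$ and the first $n$ coordinates evolve exactly as in $\bfB^1$), whereas the paper invokes part~\ref{jth i-1 after jth i} of that remark, which places no restriction on the positions of the $n+1$ entries.
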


\begin{proof}
    Notice that in the proof of \Cref{addn+1} we exhibited one possible way to augment $P^{\bfB^1}$ in order to obtain $P^{\bfB^r}$. Recall that, for any $\bfC\in \MC(\calG^r)$, exactly $r-1$ entries of $P^{\bfC}$ equal to $n+1$; but by \Cref{proving Strategy}~\ref{jth i-1 after jth i}, there are no restrictions on which entries can equal $n+1$. Since $P^{\bfB^1}$ has $\ell(L)-1$ entries, there are exactly $\binom{\ell(L)+r-2}{r-1}$ ways to augment $P^{\bfB^1}$ with additional $r-1$ entries equal to $n+1$. 
    Thus, there exist exactly $\binom{\ell(L)+r-2}{r-1}$ maximal cliques $\bfB^r$ from which $\bfB^1$ is obtained by removing all entries equal to $n+1$.
\end{proof}

We are now able to relate the sequence indices of $\calA^r$ and $\calA^1$.

\begin{Corollary}\label{cor: si between 1 and r}
Adopt \Cref{LadderSetting} and let $\calA^{r}, \calA^1$ be the maximal cliques of $\calG^r$ and $\calG^1$ as in \Cref{MaxLengthClique},  respectively. Then, $\si{\calA^r}=\max\{r-1, \si{\calA^1}\}$.
\end{Corollary}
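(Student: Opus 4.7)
The plan is to directly compare Construction~\ref{MaxLengthClique} applied with parameters $r$ and $1$, showing that the position-change sets of $\calA^r$ differ from those of $\calA^1$ only by the insertion of a copy of $n+1$ at the end of each of the first $r-1$ nonempty increasing sequences (adding singleton sequences $\{n+1\}$ if $r-1>\si{\calA^1}$). Once this is established, counting the nonempty position-change sets yields the desired formula.

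First, I would prove by induction on $k$ that $P_k^{\calA^r}\cap[n]=P_k^{\calA^1}$ for every $k$. The key observation is that the criteria in Construction~\ref{MaxLengthClique} for membership of $q\in[n]$ in $P_k$ involve only the first $n$ coordinates of $\bda^{1+H_{k-1}}$, since they compare consecutive entries or compare an entry to some $v_q$ or to $m$. Because the first $n$ coordinates of $\bda^{1+H_{k-1}}$ are affected only by additions of $\bde_q$ with $q\in[n]$, the inductive step goes through regardless of how many times $n+1$ has been added.

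Next, I would show that $n+1\in P_k^{\calA^r}$ if and only if $k\le r-1$. Since $\bda^{1+H_{k-1}}_{n+1}=1+|\{s<k:n+1\in P_s^{\calA^r}\}|$, the condition $\bda^{1+H_{k-1}}_{n+1}<r$ for $n+1$ to lie in $P_k$ holds exactly when fewer than $r-1$ copies of $n+1$ have been placed in prior steps. A short induction then shows that $n+1$ is placed in $P_1^{\calA^r},\dots,P_{r-1}^{\calA^r}$ and in no later $P_k^{\calA^r}$. In particular, $P_k^{\calA^r}$ is nonempty for every $k\le r-1$, so the construction for $\calA^r$ cannot halt before step $r$.

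Combining these two facts gives $P_k^{\calA^r}=P_k^{\calA^1}\cup\{n+1\}$ for $k\in[r-1]$ and $P_k^{\calA^r}=P_k^{\calA^1}$ for $k\ge r$. Since the construction terminates at the first empty $P_k$, the process for $\calA^r$ halts precisely at step $\max\{r-1,\si{\calA^1}\}+1$, giving $\si{\calA^r}=\max\{r-1,\si{\calA^1}\}$. The only mildly delicate point is that the global index $H_{k-1}$ in Construction~\ref{MaxLengthClique} differs between the $r=1$ and general $r$ runs because of the extra $n+1$ entries; this is sidestepped by working throughout with the sets $P_k$ and only the first $n$ coordinates of the $\bda^i$'s, rather than with the full ambient indexing.
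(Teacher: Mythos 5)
Your proof is correct, but it takes a genuinely different route from the paper. The paper deduces the corollary from two previously established results: \Cref{addn+1}, which gives the removal/augmentation correspondence between maximal cliques of $\calG^r$ and $\calG^1$, and \Cref{best reg ind}, which says $\calA^r$ minimizes the sequence index over $\MC(\calG^r)$; it then proves the two inequalities $\si{\calA^r}\ge\max\{r-1,\si{\calA^1}\}$ (remove the $n+1$ entries from $P^{\calA^r}$ and invoke minimality of $\si{\calA^1}$) and $\si{\calA^r}\le\max\{r-1,\si{\calA^1}\}$ (augment $P^{\calA^1}$ as in the proof of \Cref{addn+1} and invoke minimality of $\si{\calA^r}$) separately. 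You instead run \Cref{MaxLengthClique} in parallel for $r$ and for $1$ and exploit the fact that the membership criteria decouple: the tests for $q\in[n]$ read only the first $n$ coordinates of the current tuple, which are updated only by moves in $[n]$, while the test for $n+1$ reads only the last coordinate. This yields the exact relation $P_k^{\calA^r}=P_k^{\calA^1}\cup\{n+1\}$ for $k\in[r-1]$ and $P_k^{\calA^r}=P_k^{\calA^1}$ for $k\ge r$, which is strictly stronger than the corollary: it shows the augmented clique built in the converse direction of \Cref{addn+1} literally \emph{equals} $\calA^r$, something the paper never needs or proves, and it makes your argument self-contained, independent of \Cref{best reg ind}. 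The trade-off is that the paper's proof is two lines given machinery needed elsewhere anyway, while yours gives finer structural information (e.g., it reproves \Cref{Obser:Epsilon_i}-type formulas for free). One small point you share with the paper and should make explicit: you count nonempty sets $P_k$ and call that count $\si{\calA^r}$, which tacitly assumes the construction's decomposition coincides with the maximal-increasing-run decomposition of \Cref{si Index}, i.e., that $\min P_{k+1}\le\max P_k$ for consecutive nonempty steps. This is true but deserves a line: if every element of $P_{k+1}$ exceeded every element of $P_k$, then the coordinates governing the membership condition of $q=\min P_{k+1}$ would be untouched during step $k$, forcing $q\in P_k$, a contradiction.
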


\begin{proof}
     We first notice that $r-1 \le \si{\calA^r}$ as $n+1$ appears with multiplicity $r-1$ in $P^{\calA^r}$. On the other hand, if we remove all the entries equal to $n+1$ from $P^{\calA^r}$, by \Cref{addn+1} and its proof we obtain a tuple $P'=P^{\bfB^1}$ for some $\bfB^1\in \MC(\calG^1)$ without increasing the sequence index. Hence $\si{\calA^r}\ge \si{\bfB^1}\ge \si{\calA^1}$, where the last inequality holds by \Cref{best reg ind}. Hence $\si{\calA^r}\ge\max\{r-1, \si{\calA^1}\}$.

Now notice that, as in the proof of \Cref{addn+1}, we can construct a tuple $P'$ from $P^{\calA^1}$ so that $P'=P^{\bfB^r}$ for some $\bfB^r\in \MC(\calG^r)$ and $\si{\bfB^r}=\max\{r-1, \si{\calA^1}\}$.
By \Cref{best reg ind} we then have $\si{\calA^r}\le \si{\bfB^r}=\max\{r-1, \si{\calA^1}\}$.
\end{proof}

We are finally ready to state the main theorem of this section, which establishes a formula for $\reg(\calF(M))$ and explains how it relates to  $\reg(\calF(L))$.

\begin{Theorem}\label{reg}
  Adopt \Cref{LadderSetting} and let $\calA^r, \calA^1$ be the maximal cliques of $\calG^r, \calG^1$ from \Cref{MaxLengthClique}.  Then 
 $$ \reg (\calF(M)) =\ell(M)-1-\si{\calA^r}=\reg(\calF(L))+\min\{r-1, \si{\calA^1}\}. $$
  In particular, $\reg (\calF(M))=r-1+\sum_{i=1}^{n}\Delta_i-\max\{r-1, \si{\calA^1}\}$.
\end{Theorem}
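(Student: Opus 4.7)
The plan is to assemble the main formula by chaining together the results built up throughout the section. By \Cref{SAGBIandGB}~\ref{regToPd}, we have $\reg(\calF(M)) = \pd((\ini_{\sigma}(\calJ))^{\vee})$. Since $(\ini_{\sigma}(\calJ))^{\vee}$ is equigenerated (\Cref{analyticSpread}) and has linear quotients (\Cref{Thm: Linear Quotients}) via the enumeration of $\MC(\calG^r)$ given in \Cref{goaltoShowlinear}, \Cref{MaxLenghtToPd} yields
\[
\reg(\calF(M)) \;=\; \pd((\ini_{\sigma}(\calJ))^{\vee}) \;=\; \max\bigl\{\mu(Q_{\bfB}) : \bfB \in \MC(\calG^r)\bigr\}.
\]
I would then apply \Cref{pdLowA}, which says that $\mu(Q_{\bfB}) = \ell(M) - 1 - \si{\bfB}$ for every $\bfB \in \MC(\calG^r)$, so that the maximum above equals $\ell(M) - 1 - \min\{\si{\bfB} : \bfB \in \MC(\calG^r)\}$. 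By \Cref{best reg ind}, this minimum is achieved by the maximal clique $\calA^r$ of \Cref{MaxLengthClique}. This delivers the first equality $\reg(\calF(M)) = \ell(M) - 1 - \si{\calA^r}$.

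For the second equality, I would specialize the identity just proved to $r=1$ to get $\reg(\calF(L)) = \ell(L) - 1 - \si{\calA^1}$, and then use \Cref{cor: si between 1 and r}, which gives $\si{\calA^r} = \max\{r-1, \si{\calA^1}\}$, together with $\ell(M) = \ell(L) + r - 1$ from \Cref{analyticSpread}. A direct manipulation using the identity $\min\{a,b\} + \max\{a,b\} = a+b$ yields
\[
\ell(M) - 1 - \max\{r-1, \si{\calA^1}\} = (\ell(L) - 1 - \si{\calA^1}) + \bigl(r-1 + \si{\calA^1} - \max\{r-1,\si{\calA^1}\}\bigr),
\]
and the quantity in parentheses on the right is exactly $\min\{r-1, \si{\calA^1}\}$, giving the second claimed equality.

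Finally, the ``in particular'' statement is immediate from the computation of the analytic spread in \Cref{analyticSpread}: substituting $\ell(M) = r + \sum_{i=1}^{n} \Delta_i$ into $\reg(\calF(M)) = \ell(M) - 1 - \max\{r-1, \si{\calA^1}\}$ yields
\[
\reg(\calF(M)) = r - 1 + \sum_{i=1}^{n} \Delta_i - \max\{r-1, \si{\calA^1}\}.
\]
There is no real obstacle here, since every ingredient has been set up in the previous sections; the only point requiring care is verifying the elementary identity relating the $\min$, $\max$ and linear expressions when passing between the two stated forms of the regularity.
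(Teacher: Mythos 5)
Your proposal is correct and follows essentially the same route as the paper's proof: the chain \Cref{SAGBIandGB}~\ref{regToPd} $\to$ \Cref{Thm: Linear Quotients} $\to$ \Cref{MaxLenghtToPd} $\to$ \Cref{pdLowA} and \Cref{best reg ind} for the first equality, then \Cref{analyticSpread} and \Cref{cor: si between 1 and r} for the remaining formulas. The only difference is that you spell out the elementary $\min$/$\max$ manipulation that the paper leaves implicit, which is a fine (indeed slightly more complete) write-up of the same argument.
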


\begin{proof}
Recall that $M=\bigoplus_{i=1}^{r}L$ and $N=\bigoplus_{i=1}^{r}(\ini_{\tau} L))$. As usual, let $\calJ$ denote the defining ideal of $\mathcal{F}(N)$. By \Cref{SAGBIandGB}~\ref{regToPd}, we know that $\reg(\calF(M))=\pd((\ini_{\sigma}(\calJ))^\vee)$. Moreover, since $(\ini_{\sigma}(\calJ))^{\vee}$ has linear quotients by \Cref{Thm: Linear Quotients}, it follows from \Cref{MaxLenghtToPd} that $\pd((\ini_{\sigma}(\calJ))^{\vee})=\max\{\mu(Q_{\bfB}): \bfB\in \MC(\calG)\}$.
Hence, the first equality now follows from \Cref{best reg ind} and \Cref{pdLowA}, and implies that $\reg (\calF(L)) =\ell(L)-1-\si{\calA^1}$.
Finally,   \Cref{analyticSpread} and \Cref{cor: si between 1 and r} complete the proof. 
\end{proof}

Using \Cref{CosLS:2.18}, we can now calculate the reduction number of $M$ and the $a$-invariant of $\calF(M)$. 

\begin{Corollary}\label{AinvariandReduction}
Adopt \Cref{LadderSetting} and let $\calA^r, \calA^1$ be the maximal cliques of $\calG^r, \calG^1$ as in \Cref{MaxLengthClique}. Then the reduction number of $M$ is 
$$r(M) = \ell(M)-1-\si{\calA^r}=r(L)+\min\{r-1, \si{\calA^1}\},$$
and the $a$-invariant of $\calF(M)$ is
  $$ a(\mathcal{F}(M)) =-1-\si{\calA^r}=-1-\max\{r-1, \si{\calA^1}\}.$$
\end{Corollary}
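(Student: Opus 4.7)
The plan is to derive this corollary as an immediate consequence of \Cref{reg} combined with the general machinery of \Cref{CosLS:2.18}. Since both formulas for $r(M)$ and $a(\mathcal{F}(M))$ follow from linear algebraic manipulations once the regularity is known, the work is entirely bookkeeping.

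First, I would invoke \Cref{thm:sagbiLifts} to recall that $\calF(M)$ is a Cohen--Macaulay normal domain. This is exactly the hypothesis needed to apply \Cref{CosLS:2.18}, which yields the two identities
\begin{equation*}
r(M) = \reg(\calF(M)) \qquad \text{and} \qquad a(\calF(M)) = \reg(\calF(M)) - \ell(M).
\end{equation*}

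Next, I would substitute the formulas from \Cref{reg}. Since $\reg(\calF(M)) = \ell(M) - 1 - \si{\calA^r}$, the first identity above gives $r(M) = \ell(M) - 1 - \si{\calA^r}$. Applying \Cref{reg} a second time (specialized to $r=1$, which yields $r(L) = \reg(\calF(L)) = \ell(L) - 1 - \si{\calA^1}$) together with the decomposition $\reg(\calF(M)) = \reg(\calF(L)) + \min\{r-1,\si{\calA^1}\}$ from \Cref{reg}, we obtain $r(M) = r(L) + \min\{r-1, \si{\calA^1}\}$, which is the second half of the reduction number formula.

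Finally, for the $a$-invariant, I would plug the expression $r(M) = \ell(M) - 1 - \si{\calA^r}$ into $a(\calF(M)) = r(M) - \ell(M)$ to get $a(\calF(M)) = -1 - \si{\calA^r}$. The second equality $-1 - \si{\calA^r} = -1 - \max\{r-1, \si{\calA^1}\}$ is then immediate from \Cref{cor: si between 1 and r}. There is no real obstacle here: the entire corollary is a formal rearrangement of previously established identities, and the only thing to be careful about is ensuring that Cohen--Macaulayness is invoked so that \Cref{CosLS:2.18} applies.
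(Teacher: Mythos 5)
Your proposal is correct and matches the paper's own (implicit) argument exactly: the paper states this corollary without proof, immediately after \Cref{reg}, precisely because it follows by substituting the regularity formulas of \Cref{reg} into \Cref{CosLS:2.18} (whose Cohen--Macaulay hypothesis is supplied by \Cref{thm:sagbiLifts}) and invoking \Cref{cor: si between 1 and r} for the identity $\si{\calA^r}=\max\{r-1,\si{\calA^1}\}$. Your bookkeeping, including the specialization to $r=1$ to identify $r(L)=\reg(\calF(L))$, is exactly what the paper intends.
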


We emphasize that the formula for the regularity given by \Cref{reg} is explicit, as the sequence index $\si{\calA^r}$ can be easily calculated following the algorithm outlined in \Cref{MaxLengthClique}. 

\begin{Example}
   We revisit \Cref{ex:Construction}. Recall that in this example $\epsilon_1=2, \epsilon_2=1$, $r=2$ and $\si{\calA^r}=5$. Therefore, by \Cref{reg} and \Cref{AinvariandReduction}, we have that 
   $$ r(M)=\reg(\calF(M))=\ell(M)-1-\si{\calA^r}=14-1-5=8  \quad \text{and} \quad a(\calF(M))=-1-\si{\calA^r}=-6.$$ 
\end{Example}

One can always use the algorithm defined in \Cref{MaxLengthClique} and \Cref{cor: si between 1 and r} to calculate $\si{\calA^r}$, but for matrices of large size this could be a lengthy process. However, having information on the $\epsilon_i$'s sometimes allows us to immediately compute $\si{\calA^r}$, and hence calculate $\reg(\calF(M))$ and $a(\calF(M))$. For instance, using the calculation of $\si{\calA^r}$ from \Cref{Obser:Epsilon_i}, we obtain the following corollary.

\begin{Corollary}\label{regEpsilon_i}
    Adopt \Cref{LadderSetting}. 
\begin{enumerate}[a]
   \item If $\epsilon_j\ge 2$ for all $j\in[n-1]$, then $\reg(\calF(M)) = r-1+\sum_{i=1}^{n}\Delta_i-\max\limits_{i \in [n]}\{\Delta_i, r-1 \}$ and $a(\calF(M))= -1 -\max\limits_{i \in [n]}\{\Delta_i, r-1 \}$.
   \item If $\epsilon_j=1$ for all $j\in[n-1]$, then $\reg(\calF(M))= r-1+\sum_{i=1}^{n}\Delta_i-\max\limits_{i \in [n]}\{\Delta_i+n-i, r-1 \}$ and $a(\calF(M))=-1-\max\limits_{i \in [n]}\{\Delta_i+n-i, r-1 \}$.
\end{enumerate}
\end{Corollary}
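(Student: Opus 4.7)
The proof will be a direct substitution, combining three results already established in the paper: the regularity formula from \Cref{reg}, the $a$-invariant formula from \Cref{AinvariandReduction}, and the explicit values of $\si{\calA^r}$ computed in \Cref{Obser:Epsilon_i} under the two numerical hypotheses on the $\epsilon_j$'s.

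My plan is to start from the identity
\[ \reg(\calF(M)) = \ell(M) - 1 - \si{\calA^r}, \]
which is the first equality in \Cref{reg}, and rewrite it using \Cref{analyticSpread}, namely $\ell(M) = r + \sum_{i=1}^n \Delta_i$, to obtain
\[ \reg(\calF(M)) = r - 1 + \sum_{i=1}^n \Delta_i - \si{\calA^r}. \]
At the same time, by \Cref{AinvariandReduction} we have $a(\calF(M)) = -1 - \si{\calA^r}$. Hence in both cases the only remaining task is to substitute the explicit value of $\si{\calA^r}$ guaranteed by the hypothesis on the $\epsilon_j$'s.

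For part (a), where $\epsilon_j \geq 2$ for every $j \in [n-1]$, \Cref{Obser:Epsilon_i}\ref{all>2Delta} gives $\si{\calA^r} = \max_{i \in [n]}\{\Delta_i,\, r-1\}$; plugging this in yields both desired formulas. For part (b), where $\epsilon_j = 1$ for every $j \in [n-1]$, \Cref{Obser:Epsilon_i}\ref{All=1Delta} gives $\si{\calA^r} = \max_{i \in [n]}\{\Delta_i + n - i,\, r-1\}$, and the same substitution produces the stated expressions for $\reg(\calF(M))$ and $a(\calF(M))$.

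There is no real obstacle here: the content of this corollary lives entirely in the earlier results, and the proof amounts to assembling them. The only minor point to verify is that \Cref{Obser:Epsilon_i} is indeed applied to $\calA = \calA^r$ (not merely $\calA^1$), so that the $r-1$ term already appears inside the maximum and no further invocation of \Cref{cor: si between 1 and r} is needed; this is immediate from how $\calA^r$ is defined in \Cref{MaxLengthClique}, where the position $n+1$ is included in $P_k$ whenever $\bda^{1+H_{k-1}}_{n+1} < r$.
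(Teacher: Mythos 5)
Your proposal is correct and takes essentially the same route as the paper, which offers no separate proof but explicitly derives the corollary by substituting the values of $\si{\calA^r}$ from \Cref{Obser:Epsilon_i} into the formulas of \Cref{reg} and \Cref{AinvariandReduction} (with $\ell(M)=r+\sum_{i=1}^n\Delta_i$ from \Cref{analyticSpread}). Your closing verification is also accurate: since \Cref{Obser:Epsilon_i} is stated directly for $\calA=\calA^r$, the $r-1$ term already sits inside the maximum and no additional appeal to \Cref{cor: si between 1 and r} is required.
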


When $\bfX_{\bfS}$ is a generic matrix and $r=1$, the dimension, regularity and $a$-invariant of $\calF(I_{n}(\bfX_{\bfS}))$ are well-known. Our methods recover the known formulas for these invariants, and in particular a well known result of Bruns and Herzog on $a(\calF(I_{n}(\bfX_{\bfS})))$, see \cite[Corollary~1.4]{BHAinvariant}.

\begin{Corollary} \label{recover BHanviarant} 
 Let $\bfX$ be a $n \times m$ generic matrix, $n \leq m,$ and let $I_n(\bfX)$ be the ideal of maximal minors of $\bfX$. Then
$\dim (\calF(I_n(\bfX)))=n(m-n)+1$, $\reg(\calF(I_n(\bfX)) )=(n-1)(m-n-1)$, and $a(\calF(I_n(\bfX)))=-m$.
\end{Corollary}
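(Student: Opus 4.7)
The plan is to realize the generic $n\times m$ matrix as a very particular ladder matrix that satisfies the hypotheses of \Cref{LadderSetting}, and then read off the three invariants directly from \Cref{analyticSpread} and the diagonal case of \Cref{regEpsilon_i} with $r=1$. The only real work is identifying the correct reduced ladder shape; once that is in hand, the computation is formal.

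First I would note that for a generic $\bfX$ we have $S_i=[1,m]$ for every $i\in[n]$, so the hypotheses $u_{i-1}<u_i$ and $v_{i-1}<v_i$ of \Cref{LadderSetting} are not satisfied on the nose. I would invoke \Cref{shrinkmatrix} and its symmetric counterpart for the upper endpoints (noted there by the phrase ``similarly $v_{i-1}<v_i$'') and iterate: replacing $S_i$ by $[u_i+1,v_i]$ whenever $u_{i-1}=u_i$ forces, after finitely many passes, $u_i=i$; the analogous reduction from above forces $v_i=m-n+i$. Since $\ini_\tau$ is preserved at each step, this produces a ladder matrix with $S_i=[i,m-n+i]$ that has the same special fiber ring data as the original. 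The validity of the reduction can also be seen combinatorially: a maximal minor of a generic matrix indexed by columns $c_1<\cdots <c_n$ forces $c_i\in[i,m-n+i]$, so only those entries contribute to $\ini_\tau(I_n(\bfX))$.

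With this reduction in hand, we have $\Delta_i=v_i-u_i=m-n$ for every $i\in[n]$ and $\epsilon_j=u_{j+1}-u_j=1$ for every $j\in[n-1]$. For the dimension, \Cref{analyticSpread} applied with $r=1$ immediately gives
\[
\dim\bigl(\calF(I_n(\bfX))\bigr)=\ell(I_n(\bfX))=1+\sum_{i=1}^n\Delta_i=1+n(m-n).
\]
For the regularity and $a$-invariant, since $\epsilon_j=1$ throughout, I would apply \Cref{regEpsilon_i}(b) with $r=1$. The maximum $\max_{i\in[n]}\{\Delta_i+n-i,\,0\}=\max_{i\in[n]}\{m-i\}=m-1$ is attained at $i=1$, so
\[
\reg\bigl(\calF(I_n(\bfX))\bigr)=n(m-n)-(m-1)=(n-1)(m-n-1),
\]
and
\[
a\bigl(\calF(I_n(\bfX))\bigr)=-1-(m-1)=-m,
\]
recovering the Bruns--Herzog formula.

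The main (and essentially only) obstacle is being careful with the two-sided reduction to the shape $S_i=[i,m-n+i]$, since \Cref{shrinkmatrix} as stated addresses only the lower endpoints explicitly; after that, the three conclusions are one-line substitutions into the results already established in \Cref{analyticSpread} and \Cref{regEpsilon_i}.
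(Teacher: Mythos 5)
Your proposal is correct and follows essentially the same route as the paper's proof: the paper simply posits the ladder shape $S_i=[i,m-n+i]$ and notes $\ini_{\tau}(I_n(\bfX))=\ini_{\tau}(L)$ --- which is exactly the content of your column-index observation $c_i\in[i,m-n+i]$, making the iterated appeal to \Cref{shrinkmatrix} unnecessary --- and then reads off the three invariants from \Cref{analyticSpread} and \Cref{regEpsilon_i} with $r=1$, just as you do.
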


\begin{proof}
     Let $\bfX$ be an $n \times m$ generic matrix and let $I_n(\bfX)$ be the ideal of maximal minors of $\bfX$. 
     Let $S_1=[1,m-n+1]$, $S_2=[2,m-n+2], \ldots, S_n=[n,m]$, 
     $\bfS =\{S_1, \ldots, S_n\}$ and $L$ the ideal of maximal minors of $\bfX_{\bfS}$. Notice that $\ini_{\tau}(I_n(\bfX))=\ini_{\tau}(L)$. Then using \Cref{SAGBIandGB} we may replace $I_n(\bfX)$ by $L$ in our computations.
     
     Now, by the shape of $\bfX_{\bfS}$ we know that $\Delta_i=m-n$ for all $i\in[n]$ and $\epsilon_j=1$ for all $j\in[n-1]$. Hence by \Cref{analyticSpread} we have
     $$\dim (\calF(I_n(\bfX)))=\ell(I_n(\bfX))=\ell(L)=\sum_{i=1}^{n}\Delta_i+1=n(m-n)+1.$$
     Finally, the calculation for the regularity and $a$-invariant follow from \Cref{regEpsilon_i}.
\end{proof}

Next we show that we also recover the known formulas for the regularity, dimension and $a$-invariant of the special fiber ring in the case of a $2\times m$ sparse matrix $\bfX$. Without loss of generality, we may assume that, after permuting the rows and columns of $\bfX$, there exist nonnegative integers $\epsilon, s$ with $ \epsilon < m$ such that 
    $$\bfX= \begin{pmatrix}
x_{1,1} & \cdots & x_{1,\epsilon} & x_{1,\epsilon+1} & \cdots & x_{1,\epsilon+s} &  0 & \cdots & 0 \\
0 & \cdots & 0 & x_{2,\epsilon+1} & \cdots & x_{2,\epsilon+s} & x_{2,\epsilon+s+1} & \cdots & x_{2,m}
\\
\end{pmatrix};$$
see \cite[Section~2]{CDFGLPS}. Notice that, with the notation of \Cref{LadderSetting}, we have $\epsilon_1=\epsilon$, $\Delta_1=\epsilon+s-1$, and $\Delta_2=m-\epsilon-1$. Moreover, if $m=\epsilon+s$, then as in \Cref{shrinkmatrix} we can replace $x_{1, \epsilon+s}$ with a zero without altering the calculations of the invariants in the statement. Hence without loss of generality we may assume that $m-\epsilon-s>0$. Similarly, if $\epsilon=0$, then we can replace $x_{2,1}$ with a zero again by \Cref{shrinkmatrix}. Thus we may also assume $\epsilon>0$.

\begin{Corollary}\label{cor: sparse matrices}
    Let $\bfX$ be a $2\times m$ sparse matrix and $I_2(\bfX)$ be the ideal of maximal minors of $\bfX$. Then
    $\dim(\calF(I_2(\bfX)))=m+s-1$, 
    $\reg(\calF(I_2(\bfX)) )=\min\{m-3,\epsilon+s-1, m-\epsilon-1\}$, and $a(\calF(I_2(\bfX)))=\min\{-s-2,-m+\epsilon, -\epsilon-s\}$. 
\end{Corollary}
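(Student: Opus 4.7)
The plan is to specialize the results from Sections 3 and 4 to the case $n=2$, $r=1$, using the numerical data $\epsilon_1 = \epsilon$, $\Delta_1 = \epsilon+s-1$, $\Delta_2 = m-\epsilon-1$, under the standing assumptions $\epsilon > 0$ and $m-\epsilon-s > 0$ (which were argued to be WLOG in the paragraph preceding the statement). The three formulas then follow by evaluating \Cref{analyticSpread}, \Cref{reg}, and \Cref{AinvariandReduction} (with $r=1$, so that $\max\{r-1, \si{\calA^1}\} = \si{\calA^1}$), and comparing against the claimed expressions via a two-case analysis according to whether $\epsilon \geq 2$ or $\epsilon = 1$. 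Since $\epsilon \geq 1$, these cases are exhaustive.

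First, I would compute the dimension directly from \Cref{analyticSpread}:
\[
\dim(\calF(I_2(\bfX))) = \ell(L) = 1 + \Delta_1 + \Delta_2 = 1 + (\epsilon+s-1) + (m-\epsilon-1) = m+s-1.
\]
Next, to compute $\reg(\calF(I_2(\bfX)))$ and $a(\calF(I_2(\bfX)))$, I would apply \Cref{Obser:Epsilon_i} (or equivalently \Cref{regEpsilon_i}). If $\epsilon \geq 2$, then $\si{\calA^1} = \max\{\Delta_1, \Delta_2\} = \max\{\epsilon+s-1, m-\epsilon-1\}$, so by \Cref{reg},
\[
\reg(\calF(M)) = \ell(M) - 1 - \si{\calA^1} = \min\{m-\epsilon-1,\; \epsilon+s-1\}.
\]
Since $\epsilon \geq 2$ forces $m-\epsilon-1 \leq m-3$, this equals $\min\{m-3,\;\epsilon+s-1,\;m-\epsilon-1\}$. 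If instead $\epsilon = 1$, then $\si{\calA^1} = \max\{\Delta_1+1, \Delta_2\} = \max\{s+1, m-2\}$, giving $\reg(\calF(M)) = \min\{m-3, s\}$. Now $\epsilon=1$ means $\epsilon+s-1 = s$ and $m-\epsilon-1 = m-2 \geq m-3$, so this coincides with $\min\{m-3,\;\epsilon+s-1,\;m-\epsilon-1\}$.

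Finally, for the $a$-invariant I would use \Cref{AinvariandReduction}, which for $r=1$ reduces to $a(\calF(M)) = -1 - \si{\calA^1}$. An entirely parallel case analysis yields: when $\epsilon \geq 2$, $a = \min\{-\epsilon-s,\; -m+\epsilon\}$, and the inequality $\epsilon \geq 2$ gives $-\epsilon-s \leq -s-2$, so this equals $\min\{-s-2,\;-m+\epsilon,\;-\epsilon-s\}$; when $\epsilon=1$, $a = \min\{-s-2,\;-m+1\}$, which matches $\min\{-s-2,\;-m+\epsilon,\;-\epsilon-s\}$ since $-\epsilon-s = -s-1 \geq -s-2$. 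The main "obstacle" here is purely bookkeeping: verifying in each case that the two-term minimum produced by the algorithm agrees with the symmetric three-term minimum stated in the corollary, which amounts to checking which of the three quantities is redundant under the hypothesis on $\epsilon$.
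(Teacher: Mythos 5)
Your proposal is correct and matches the paper's own proof essentially verbatim: the dimension via \Cref{analyticSpread}, and the regularity and $a$-invariant via \Cref{regEpsilon_i} with the same case split $\epsilon \ge 2$ versus $\epsilon = 1$, followed by the same redundancy check reconciling the two-term and three-term minima. The only cosmetic difference is that the paper first dispatches the generic case through \Cref{recover BHanviarant} before invoking the normalization $\epsilon > 0$, $m - \epsilon - s > 0$, whereas you rely on that normalization alone, which the preceding discussion (via \Cref{shrinkmatrix}) indeed justifies.
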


\begin{proof}
    The case when $\bfX$ is a generic matrix is already covered in \Cref{recover BHanviarant}. Hence, by the discussion above, we may assume that $\bfX$ has the form above. Moreover, $m-\epsilon-s>0$, $\epsilon_1=\epsilon$, $\Delta_1=\epsilon+s-1$, and $\Delta_2=m-\epsilon-1$. 

By \Cref{analyticSpread} it immediately follows that $\ell(I_2(\bfX))=\Delta_1+\Delta_2+1=m+s-1=\min\{m+s-1, 2m-\epsilon-2\}$, thus recovering \cite[Corollary~4.8~(a)]{CDFGLPS}.

By \Cref{regEpsilon_i}, when $\epsilon \ge 2$, $\reg(\calF(I_2(\bfX)) ) = \Delta_1+\Delta_2-\max\limits\{\Delta_1, \Delta
_2\}=\min\{\Delta_1,\Delta_2\}=\min\{\epsilon+s-1, m-\epsilon-1\}$. When when $\epsilon =1$, $\reg(\calF(I_2(\bfX)) ) = \Delta_1+\Delta_2-\max\limits\{\Delta_1+1, \Delta_2\}=\min\{\Delta_1,\Delta_2-1\}=\min\{\epsilon+s-1, m-\epsilon-2\}=\min\{\epsilon+s-1, m-3\}$. Therefore, $\reg(\calF(I_2(\bfX)) )=\min\{m-3,\epsilon+s-1, m-\epsilon-1\}$, as shown also in \cite[Corollary~4.8~(c)]{CDFGLPS}.  Similarly, 
$a(\calF(I_2(\bfX)))=\min\{-s-2,-m+\epsilon, -\epsilon-s\}$, again recovering \cite[Corollary~4.8~(d)]{CDFGLPS}.
\end{proof}

We devote the rest of this section to investigate ways to calculate $\si{\calA^1}$, and hence $\si{\calA^r}$, just from the data of the matrix $\bfX_{\bfS}$.
Inspired by the previous corollary, we introduce the following notation. 

\begin{Notation}\label{gorensteinRows}
 Adopt \Cref{LadderSetting} with $r=1$ and let $\calA^1 \in\MC(\calG^1)$ be the maximal clique defined in \Cref{MaxLengthClique}. 
Write $[n]=V\sqcup U$, where $$V=\{i \in [n-1]: \epsilon_i\ge 2\}\cup \{n\} \quad \text{and} \quad U=[n] \setminus V = \{i \in [n-1]: \epsilon_i =1\}.$$ 
Furthermore, there exist integers $\eta$ and $\eta'$ so that $V=\sqcup_{j=1}^{\eta}[g_j,h_j]$ and $U=\sqcup_{s=1}^{\eta'}[g'_s,h'_s]$, with $h_j+1< g_{j+1}$ for each $j \in [\eta-1]$, and $h'_s+1 <g'_{s+1}$ for each $s \in [\eta'-1]$.
\end{Notation}

Recall that in \Cref{Obser:Epsilon_i} we could calculate $\si{\calA^1}$ explicitly because for each $i$ we were able to determine exactly for which $k$ we had $i\in P_k^{\calA^1}$. The following proposition investigates for which $ i \in V$ or $i \in U$ we can determine $k$ so that $i\in P_k^{\calA^1}$. 

\begin{Proposition} \label{general facts for intervals}
    Adopt \Cref{LadderSetting} with $r=1$. Let  $\calA = \calA^1$ be the maximal clique defined in \Cref{MaxLengthClique} and let $P^{\calA}=(P_1^{\calA}, \ldots, P_{\si{\calA}}^{\calA})$ denote its tuple of position changes. Let $V$ and $U$ be as in \Cref{gorensteinRows}. The following hold:
\begin{enumerate}[a]
    \item \label{last V} If $i\in [g_{\eta}, h_{\eta}]$, then $i\in P_k^{\calA}$ for all $k\in[\Delta_i]$.

    \item \label{one interval for V} Suppose $i\in P_k^{\calA}$ if and only if  $k\in[a_i, b_i]$ with $a_i\le b_i$.  Then $i-1\in P_k^{\calA}$ if and only if $k\in  [\epsilon_{i-1}-1]\cup [a_i+1, \Delta_{i-1}+\max\{a_i-\epsilon_{i-1}+1, 0\}]$. 
    
    \item \label{blockObs=1pt} Let $i\in U$ and choose $s\in [\eta']$ so that $i\in [g_s', h_s']$. Suppose $h_s'+1 \in P_k^\calA$ for all $k\in [\Delta_{h_s'+1}]$. Then $i\in P_k^\calA$ for all $k\in [h_s'-i+2, \Delta_i + h_s'-i+1]$.       
\end{enumerate}    
\end{Proposition}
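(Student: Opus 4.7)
The plan is to establish all three parts by a careful analysis of the greedy algorithm in \Cref{MaxLengthClique}, using the structural inequality $\Delta_{i-1}\le\Delta_i+\epsilon_{i-1}-1$ from \Cref{LadderSetting}(b) together with the fact from \Cref{PL} that each $i\in[n]$ appears in $P^{\calA}$ exactly $\Delta_i$ times. A convenient bookkeeping device throughout is the running count $f_j(k) = |\{s\in[k-1]: j\in P_s^\calA\}|$, so that $\bda^{1+H_{k-1}}_j=u_j+f_j(k)$ for every $j\in[n]$.

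For part (a), observe that $h_\eta=n$ since $n\in V$, and that $\epsilon_j\ge 2$ for every $j\in[g_\eta,n-1]$. I would prove by induction on $k$ the joint statement that for every $i\in[g_\eta,n]$ with $\Delta_i\ge k$ one has $i\in P_k^{\calA}$ and $f_i(k+1)=k$. The base case $k=1$ is immediate from the defining conditions for $P_1$. For the inductive step, with $\Delta_i\ge k+1$, the condition $\bda^{1+H_k}_i<v_i$ reduces to $u_i+k<v_i$, which holds; and when $i<n$ the gap $\bda^{1+H_k}_{i+1}-\bda^{1+H_k}_i=\epsilon_i+\min(k,\Delta_{i+1})-k$ is at least $2$, in two subcases: if $\Delta_{i+1}\ge k$ then the gap equals $\epsilon_i\ge 2$, while if $\Delta_{i+1}<k$ then \Cref{LadderSetting}(b) gives $\epsilon_i\ge \Delta_i-\Delta_{i+1}+1$, so the gap is $\ge \Delta_i-k+1\ge 2$. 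Since each $i$ must fire exactly $\Delta_i$ times, the converse direction is automatic.

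For part (b), I would track the gap $g_k = \bda^{1+H_{k-1}}_i-\bda^{1+H_{k-1}}_{i-1}=\epsilon_{i-1}+f_i(k)-f_{i-1}(k)$. The hypothesis fixes $f_i(k)$ as a piecewise-linear function of $k$: zero on $[1,a_i]$, linearly increasing on $[a_i+1,b_i+1]$, and constant at $\Delta_i$ thereafter. Proceeding by induction on $k$, the condition $i-1\in P_k^{\calA}$ becomes $g_k\ge 2$ together with $f_{i-1}(k)<\Delta_{i-1}$. A case split along the breakpoints $k=\epsilon_{i-1}$, $k=a_i$, and $k=b_i+1$ identifies an initial firing range $[1,\epsilon_{i-1}-1]$, where the gap drops from $\epsilon_{i-1}$ down to $2$; a possible hiatus between $\epsilon_{i-1}$ and $a_i$ when $a_i\ge \epsilon_{i-1}-1$; and a second firing range beginning at $k=a_i+1$ that persists until $i-1$ has fired $\Delta_{i-1}$ times. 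Matching the total count to $\Delta_{i-1}$ then yields the closed-form upper endpoint $\Delta_{i-1}+\max\{a_i-\epsilon_{i-1}+1,0\}$.

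For part (c), I would iterate part (b) downward from $h'_s+1$ to $i$. The hypothesis gives $a_{h'_s+1}=1$ and $b_{h'_s+1}=\Delta_{h'_s+1}$. Since $h'_s\in U$ forces $\epsilon_{h'_s}=1$, applying part (b) yields $h'_s\in P_k^{\calA}\iff k\in[2,\Delta_{h'_s}+1]$, which matches the claim for $i=h'_s$. Inductively, assume the claim for some $j\in[g'_s+1,h'_s]$, so that $a_j=h'_s-j+2$. Since $j-1\in U$, one has $\epsilon_{j-1}=1$, and applying part (b) to the pair $(j-1,j)$ gives $j-1\in P_k^{\calA}\iff k\in[h'_s-(j-1)+2,\Delta_{j-1}+h'_s-(j-1)+1]$, completing the induction. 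The main obstacle will be carrying out part (b) rigorously: the piecewise analysis spans several regimes of $k$, each with its own breakpoint, and it will be essential to include the value of $f_{i-1}(k)$ itself as part of the inductive hypothesis so that both endpoints of each firing block can be verified without off-by-one errors.
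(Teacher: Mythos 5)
Your proposal is correct and follows essentially the same route as the paper: each part is read off from the greedy construction in \Cref{MaxLengthClique} together with the clique criterion of \Cref{proving Strategy}, with the multiplicity count from \Cref{PL} supplying the ``only if'' directions, and part (c) obtained by iterating part (b) down a block of rows with $\epsilon_j=1$, exactly as in the paper. The only cosmetic difference is that the paper shortcuts part (b) by observing that the claimed set already has cardinality $\Delta_{i-1}$, so only one inclusion needs checking, whereas you simulate both directions directly with the counters $f_j(k)$ and verify (via the standing inequality $\Delta_{i-1}\le \Delta_i+\epsilon_{i-1}-1$) that the second firing block exhausts its count before the gap condition can fail.
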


\begin{proof}
(a): Notice that $h_{\eta}=n$ and by \Cref{MaxLengthClique} we have  $n\in P_k^{\calA}$ for all $k\in [\Delta_n]$. If $g_{\eta}=n$, then there is nothing to show. Otherwise, $n-1\in [g_{\eta}, h_{\eta}]$ and since $\epsilon_{n-1}\ge 2$ in this case, then  $n-1\in P_k^{\calA}$ for all $k\in [\Delta_{n-1}]$, by \Cref{proving Strategy}~\ref{jth i-1 after jth i}. Inductively, we can then show that for all $i\in[g_{\eta}, h_{\eta}]$ we have  $i\in P_k^{\calA}$ for all $k\in [\Delta_i]$.

(b): Note that $\Big|[\epsilon_{i-1}-1]\cup [a_i+1, \Delta_{i-1}+\max\{a_i-\epsilon_{i-1}+1, 0\}]\Big|=\Delta_{i-1}$, where $[\epsilon_{i-1}-1]=\emptyset$ whenever $\epsilon_{i-1}=1$. Hence, by \Cref{PL} it suffices to show that $i-1\in P_k^{\calA}$ whenever $k\in [\epsilon_{i-1}-1]\cup [a_i+1, \Delta_{i-1}+\max\{a_i-\epsilon_{i-1}+1, 0\}]$. Since $i\in P_k^{\calA}$ for $k\in [a_i,b_i]$, by \Cref{proving Strategy} it then follows that $i-1\in [a_i+1, \Delta_{i-1}+\max\{a_i-\epsilon_{i-1}+1, 0\}]$. If $\epsilon_{i-1}=1$, then there is nothing else to show. 
If $\epsilon_{i-1}\ge 2$, then by \Cref{MaxLengthClique} we know $i-1\in P_k^{\calA}$ for all $k\in [\epsilon_{i-1}-1]$.

(c):  Let $i\in[g_{s}',h_{s}']$. Since $n\in V$, then $h_{s}'+1=g_{j}$ for some $j\in [\eta]$.   Since $\epsilon_{h_{s}'}=1$, then $h_{s}'\in P_k^{\calA}$ if and only if $k\in[2, \Delta_{h_{s}'}+1]$, by \Cref{proving Strategy} and the fact that $h_s'+1 \in P_k^\calA$ for all $k\in [\Delta_{h_s'+1}]$. Recursively, since $\epsilon_i=1$, we thus obtain that $i\in P_k^{\calA}$ if and only if $k\in[2+h_{s}'-i, \Delta_i+h_{s}'-i+1]$.
\end{proof}

\begin{Corollary} \label{BlackBox}
  Adopt \Cref{LadderSetting} with $r=1$. Let $\calA= \calA^1$ be the maximal clique defined in \Cref{MaxLengthClique} and write $P^{\calA}=(P_1^{\calA}, \ldots, P_{\si{\calA}}^{\calA})$ for its tuple of position changes.
  Let $V$ and $U$ be as in \Cref{gorensteinRows}. For each $j \in [\eta-1]$, write $h_j+1=g_{s}'$ for some $s\in [\eta']$ and suppose that $h_s'+1\in P^{\calA}_k$ for all $k\in [\Delta_{h_s'+1}]$. Then $h_j\in P_k^{\calA}$ for all $k\in [\Delta_{h_j}]$ if and only if $\Delta_{h_j}=\epsilon_{h_j}-1$ or $\epsilon_{h_j}\ge h_s'-g_s'+3$. 
\end{Corollary}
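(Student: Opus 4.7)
The strategy is to apply the previous proposition twice, first to describe where $g_s'=h_j+1$ appears in $P^{\calA}$ and then to derive the positions of $h_j$. Since we are assuming that $h_s'+1 \in P_k^{\calA}$ for all $k \in [\Delta_{h_s'+1}]$, the hypotheses of \Cref{general facts for intervals}~\ref{blockObs=1pt} are met for the block $[g_s',h_s']$. Applying this part of the proposition with $i=g_s'$, together with $g_s' = h_j+1$, yields that $h_j+1 \in P_k^{\calA}$ if and only if
\[
k \in [a,b], \quad \text{where } a = h_s'-h_j+1 \text{ and } b = \Delta_{h_j+1}+h_s'-h_j.
\]
Then, applying \Cref{general facts for intervals}~\ref{one interval for V} with $i=h_j+1$ (so that $a_i=a$ and $\epsilon_{i-1}=\epsilon_{h_j}$), I would obtain that $h_j \in P_k^{\calA}$ if and only if
\[
k \in \Omega \coloneq [\epsilon_{h_j}-1] \cup \bigl[a+1,\; \Delta_{h_j}+\max\{a-\epsilon_{h_j}+1,\,0\}\bigr].
\]

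The remainder of the argument is a careful case analysis comparing $\Omega$ with $[\Delta_{h_j}]$, using the key observation that $a+1 = h_s'-h_j+2 = h_s'-g_s'+3$. For the ``if'' direction, I would split into two cases. If $\Delta_{h_j}=\epsilon_{h_j}-1$, then $[\epsilon_{h_j}-1]=[\Delta_{h_j}]$ already, and one checks directly that the second interval in $\Omega$ is either empty or contained in $[\Delta_{h_j}]$, so $\Omega=[\Delta_{h_j}]$. If instead $\epsilon_{h_j} \ge h_s'-g_s'+3 = a+1$, then $\max\{a-\epsilon_{h_j}+1,0\}=0$, so the second interval of $\Omega$ becomes $[a+1,\Delta_{h_j}]$; combined with $[1,\epsilon_{h_j}-1]$ and using $\epsilon_{h_j} \ge a+1$, the two intervals meet or overlap and their union is exactly $[1,\Delta_{h_j}]$.

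For the ``only if'' direction, I would argue contrapositively. Suppose $\Delta_{h_j} > \epsilon_{h_j}-1$ and $\epsilon_{h_j} < h_s'-g_s'+3$. The second hypothesis gives $a \ge \epsilon_{h_j}$, so the second interval in $\Omega$ is $[a+1,\Delta_{h_j}+a-\epsilon_{h_j}+1]$ and starts strictly above $\epsilon_{h_j}$. Hence the integer $\epsilon_{h_j}$, which lies in $[\Delta_{h_j}]$ by the first hypothesis, belongs neither to $[1,\epsilon_{h_j}-1]$ nor to the second interval, so $\Omega \ne [\Delta_{h_j}]$.

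The main obstacle is purely bookkeeping: keeping track of the identifications $g_s'=h_j+1$ and $h_s'+1=g_{j+1}$, and handling the piecewise maximum $\max\{a-\epsilon_{h_j}+1,0\}$ cleanly so that the two cases in the disjunction $\Delta_{h_j}=\epsilon_{h_j}-1$ or $\epsilon_{h_j}\ge h_s'-g_s'+3$ match exactly the dichotomy forced by the structure of $\Omega$. Once this translation is in place, the proof reduces to comparing the endpoints of two arithmetic intervals, with no further input beyond \Cref{general facts for intervals} and the standing inequality $\Delta_{h_j}\ge \epsilon_{h_j}-1$ from \Cref{LadderSetting}.
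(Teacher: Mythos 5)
Your proposal is correct and follows essentially the same route as the paper's proof: both apply \Cref{general facts for intervals}~\ref{blockObs=1pt} to locate $g_s'=h_j+1$ in the interval $[h_s'-g_s'+2,\,\Delta_{g_s'}+h_s'-g_s'+1]$, then feed this into \Cref{general facts for intervals}~\ref{one interval for V} to get $h_j\in P_k^{\calA}$ iff $k\in[\epsilon_{h_j}-1]\cup[h_s'-g_s'+3,\,\Delta_{h_j}+\max\{h_s'-g_s'+3-\epsilon_{h_j},0\}]$, and conclude by comparing this set with $[\Delta_{h_j}]$ using $\Delta_{h_j}\ge\epsilon_{h_j}-1$. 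Your only deviation is cosmetic: you argue the ``only if'' direction contrapositively by exhibiting $\epsilon_{h_j}$ as a witness in $[\Delta_{h_j}]\setminus\Omega$, whereas the paper deduces $\max\{h_s'-g_s'+3-\epsilon_{h_j},0\}=0$ directly from the equality of the two sets.
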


\begin{proof}
   Since $h_s'+1\in P^{\calA}_k$ for all $k\in [\Delta_{h_s'+1}]$, then by \Cref{general facts for intervals}~\ref{blockObs=1pt} we have $g_s'\in P_k^{\calA}$ for all $k\in[h_s'-g_s'+2, \Delta_{g_s'}+h_s'-g_s'+1]$.  Hence, by \Cref{general facts for intervals}~\ref{one interval for V} it follows that  $h_j\in P_k^{\calA}$ if and only if $k\in [\epsilon_{h_j}-1]\cup[h_s'-g_s'+3, \Delta_{h_j}+\max\{h_s'-g_s'+3-\epsilon_{h_j}, 0\}]$. It remains to show that $[\epsilon_{h_j}-1]\cup[h_s'-g_s'+3, \Delta_{h_j}+\max\{h_s'-g_s'+3-\epsilon_{h_j}, 0\}]=[\Delta_{h_j}]$ if and only if $\Delta_{h_j}=\epsilon_{h_j}-1$ or $\epsilon_{h_j}\ge h_s'-g_s'+3$. 

Suppose $[\epsilon_{h_j}-1]\cup[h_s'-g_s'+3, \Delta_{h_j}+\max\{h_s'-g_s'+3-\epsilon_{h_j}, 0\}]=[\Delta_{h_j}]$.  If $\Delta_{h_j}\neq\epsilon_{h_j}-1$, then $\Delta_{h_j}>\epsilon_{h_j}-1$, see \Cref{LadderSetting}. Hence, $\Delta_{h_j}+\max\{h_s'-g_s'+3-\epsilon_{h_j}, 0\}=\Delta_{h_j}$ and thus $\max\{h_s'-g_s'+3-\epsilon_{h_j}, 0\}=0$. In other words, $\epsilon_{h_j}\ge h_s'-g_s'+3$.

Conversely, if $\epsilon_{h_j}\ge h_s'-g_s'+3$, then $[h_s'-g_s'+3, \Delta_{h_j}+\max\{h_s'-g_s'+3-\epsilon_{h_j}, 0\}]=[h_s'-g_s'+3, \Delta_{h_j}]$. Hence 
$[\epsilon_{h_j}-1]\cup[h_s'-g_s'+3, \Delta_{h_j}]=[\Delta_{h_j}]$, since $\Delta_{h_j}\ge \epsilon_{h_j}-1$ and $\epsilon_{h_j}\ge h_s'-g_s'+3$. If instead, $\Delta_{h_j}=\epsilon_{h_j}-1$ and $\epsilon_{h_j}< h_s'-g_s'+3$, then $[h_s'-g_s'+3, \Delta_{h_j}+\max\{h_s'-g_s'+3-\epsilon_{h_j}, 0\}]=[h_s'-g_s'+3, h_s'-g_s'+2]=\emptyset$. Therefore, $[\epsilon_{h_j}-1]\cup[h_s'-g_s'+3, \Delta_{h_j}+\max\{h_s'-g_s'+3-\epsilon_{h_j}, 0\}]=[\Delta_{h_j}]$, as claimed. 
\end{proof}

Using the assumption from \Cref{BlackBox}, we are now able to calculate $\si{\calA}$ explicitly.

\begin{Theorem}\label{thm: si extended formula}
    Adopt \Cref{LadderSetting} with $r=1$ and let $V=\sqcup_{j=1}^{\eta}[g_j,h_j]$ and $U=\sqcup_{s=1}^{\eta'}[g'_s,h'_s]$ be as in \Cref{gorensteinRows}. Let $\calA=\calA^1$ be the maximal clique defined in \Cref{MaxLengthClique}.  For each $j \in [\eta-1]$, choose $s\in [\eta']$ such that 
  $h_j+1=g_{s}'$ and suppose that either $ h'_s-g'_s+3 \le \epsilon_{h_j}$ or $\Delta_{h_j}= \epsilon_{h_j}-1$. Then 
    $$\si{\calA}=\max\{\Delta_i, \Delta_t+h_s'-t+1 \mid i\in V, t\in U \text{ so that } t\in [g_s',h_s'], s\in [\eta']\}.$$
\end{Theorem}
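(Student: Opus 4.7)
The plan is to prove a stronger \emph{main claim} that pinpoints, for every row $i\in[n]$, the exact set of $k$'s for which $i\in P_k^{\calA}$: namely, $i\in P_k^\calA$ iff $k\in[\Delta_i]$ when $i\in V$, and $i\in P_k^\calA$ iff $k\in[h'_s-i+2,\Delta_i+h'_s-i+1]$ when $i\in U\cap[g'_s,h'_s]$. Since $\si{\calA}$ equals the largest index $k$ for which $P_k^\calA\neq \emptyset$, the theorem's formula follows immediately by maximizing $\Delta_i$ and $\Delta_t+h'_s-t+1$ over $i\in V$ and $t\in U\cap[g'_s,h'_s]$, respectively.

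The main claim will be established by reverse induction on the V-block index $j\in[\eta]$, starting from the top block $[g_\eta,h_\eta]$ (which contains row $n$) and descending to $j=1$. The base case $j=\eta$ follows directly from \Cref{general facts for intervals}~\ref{last V}. For the inductive step, assume the main claim is known for $[g_{j+1},h_{j+1}]$; in particular, $g_{j+1}\in P_k^\calA$ iff $k\in[\Delta_{g_{j+1}}]$. Since $V$- and $U$-blocks alternate in $[n]$ and $h_\eta=n$, there is a unique $s\in[\eta']$ with $g'_s=h_j+1$ and $h'_s+1=g_{j+1}$. I will then chain three ingredients together: first, \Cref{general facts for intervals}~\ref{blockObs=1pt} applied to the U-block $[g'_s,h'_s]$ (using the inductive hypothesis at row $h'_s+1=g_{j+1}$) yields the main claim on $[g'_s,h'_s]$; next, \Cref{BlackBox}, invoking precisely the theorem's hypothesis for this $j$, gives $h_j\in P_k^\calA$ iff $k\in[\Delta_{h_j}]$; finally, iterating \Cref{general facts for intervals}~\ref{one interval for V} from row $h_j$ down to $g_j$ propagates the description across $[g_j,h_j]$, since $\epsilon_i\geq 2$ for $i\in[g_j,h_j-1]$ collapses $[\epsilon_{i-1}-1]\cup[2,\Delta_{i-1}]$ to $[\Delta_{i-1}]$ thanks to $\Delta_{i-1}\geq\epsilon_{i-1}-1$. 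If the lowest block of $[n]$ is a $U$-block (i.e.\ $g_1>1$), a single extra application of \Cref{general facts for intervals}~\ref{blockObs=1pt}, using the now-established description at row $g_1$, handles it and completes the proof of the main claim.

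The main obstacle is bookkeeping: the description of $P_k^\calA$ on a V-block depends on the description on the next higher V-block through an intermediate U-block, so the reverse induction on $j$ is what makes the chain well-founded. The hypothesis of the theorem is exactly the condition needed for \Cref{BlackBox} to apply at every step; once this is verified, the argument becomes a mechanical stitching together of \Cref{general facts for intervals} and \Cref{BlackBox}, with no new algebraic content.
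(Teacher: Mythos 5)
Your proposal is correct and follows essentially the same route as the paper's proof: the paper establishes the identical main claim (the exact interval description of $\{k : i \in P_k^{\calA}\}$ for $i\in V$ and $i \in U$) by reverse induction on the $V$-intervals, using \Cref{general facts for intervals}~\ref{last V} as the base case, \Cref{BlackBox} (via the theorem's hypothesis) at each block boundary, and \Cref{general facts for intervals}~\ref{blockObs=1pt} for the $U$-blocks, then reads off $\si{\calA}$ exactly as you do. The only cosmetic differences are that the paper proves the $U$-block description uniformly after finishing the $V$-induction rather than interleaving it, and propagates within a $V$-interval by citing \Cref{proving Strategy} directly where you invoke \Cref{general facts for intervals}~\ref{one interval for V}; your collapsing computation $[\epsilon_{i-1}-1]\cup[2,\Delta_{i-1}]=[\Delta_{i-1}]$ is valid and makes that step explicit.
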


\begin{proof}
    We claim that the following statements hold.
        \begin{enumerate}[1]
\item  For all $i\in V$ we have $i\in P_k^{\calA}$ if and only if  $k\in [\Delta_i]$.
\item For all $t\in U$ we have $t\in P_k^{\calA}$ if and only if $k\in [h_s'-t+2, \Delta_t + h_s'-t+1]$.
\end{enumerate}

We prove (1) by reverse induction on the intervals of $V$. First, notice that for any $i\in[g_{\eta},h_{\eta}]$ we have $i\in P_k^{\calA}$ if and only if $k\in [\Delta_{i}]$, by \Cref{general facts for intervals}~\ref{last V}.
Now, let $j\in [\eta-1]$ and suppose by induction that for any $i\in [g_{j+1}, h_{j+1}]$ we have $i\in P_k^{\calA}$ if and only if $k\in [\Delta_{i}]$.
Since $j<\eta$, then $h_j+1=g_s'$ for some $s\in[\eta']$ and either $\epsilon_{h_j}\ge h_s'-g_s'+3$ or $\Delta_{h_j}=\epsilon_{h_j}-1$ by our assumptions. Then, by \Cref{BlackBox} it follows that $h_{j}\in P_k^{\calA}$ if and only if $k\in[\Delta_{h_{j}}]$. 
Now, let $i\in [g_j,h_j]$ and notice that since $i\in V$, then   $i\in P_k^{\calA}$ if and only if  $k\in [\Delta_i]$,  by \Cref{proving Strategy} and the fact that $h_j\in P_k^{\calA}$ for all $k\in [\Delta_{h_j}]$ .

To prove (2), let $t\in U$ and $s\in [\eta']$ such that $t \in[g_s', h_s']$. By the definition of $V$ we have $h_s'+1\in V$ and  hence $h_s'+1\in P_k^\calA$ for all $k\in [\Delta_{h_s'+1}]$ by (1). Thus $t\in P_k^{\calA}$ if and only if $k\in [h_s'-t+2, \Delta_t + h_s'-t+1]$ by \Cref{general facts for intervals}~\ref{blockObs=1pt}. 

Finally, the formula for $\si{\calA}$ follows from (1) and (2).
\end{proof}

When the assumptions of \Cref{thm: si extended formula} are satisfied, one can calculate $\si{\calA^1}$ (and hence $\si{\calA^r}$) from the data of the ladder matrix $\bfX_{\bfS}$. In particular, if the size of $\bfX_{\bfS}$ is large, this method is more efficient than the algorithm outlined in \Cref{MaxLengthClique}.

\begin{Example}
    Let $n=6$, $m=18$,  $\bfS=[1,7]\cup[7,12]\cup[8,13]\cup [9,14]\cup [12, 17]\cup [14,18]$, and let $\bfX_{\bfS}$ be the ladder matrix associated to $\bfS$. Let $L=I_6(\bfX_{\bfS})$. 

    Notice $\Delta_1=6$, $\Delta_2=\Delta_3=\Delta_4=\Delta_5=5$, and $\Delta_6=4$, so $\ell(L)=1+\sum_{i=1}^{6}\Delta_i=31$. 
    Moreover, $\epsilon_1=6$, $\epsilon_2=\epsilon_3=1$,  $\epsilon_4=3$, and $\epsilon_5=2$. 
    Therefore, $V=[1]\sqcup [4,6]$ and $U=[2,3]=[g_1',h_1']$ as in \Cref{gorensteinRows}. Notice that $h_1'-g_1'+3=4<\epsilon_1$ and the conditions of \Cref{thm: si extended formula} are satisfied. Hence, 
    $$\si{\calA^1}=\max\{\Delta_1, \Delta_4, \Delta_5, \Delta_6, \Delta_2+3-2+1, \Delta_3+3-3+1\}=\{6,5,5,4,5+2, 5+1\}=7.$$ 
    
    Therefore, by \Cref{cor: si between 1 and r} we have $\si{\calA^r}=\max\{r-1, \si{\calA^1}\}=\max\{r-1, 7\}$. Hence, by \Cref{reg} we have $\reg(\calF(L))=\ell(L)-1-\si{\calA^1
    }=31-1-7=23\,$ and 
    $$ \reg(\calF(M))=r+29-\max\{r-1, 7\}= \begin{cases}
        r+22 & \text{ if }\,  r\le 8, \\
        30 & \text{ if }\, r>8 .
    \end{cases}$$
\end{Example}

    Recall by \Cref{SAGBIandGB} that $\ell(M)=\ell(N)$ and $\reg(\calF(M))=\reg(\calF(N))$; moreover, $\calF(N)$ is a Hibi ring, see \Cref{FiberLadder}. With the Hibi ring structure in mind, formulas for $\ell(N)$ and $\reg(\calF(N))$ can be given in terms of combinatorial data from the poset $P$ of the join-irreducible elements of the lattice $\calL \times [r]$; see \cite[Theorems 6.38~and~6.42]{HerHiOh}. 
    Our formulas for $\ell(N)$ and $\reg(\calF(N))$ from \Cref{analyticSpread} and \Cref{reg} prove that these invariants can be calculated using data from the ladder matrix $\bfX_{\bfS}$ only, without any knowledge of $P$; see also \Cref{thm: si extended formula}. 
    As a corollary, we are able to interpret the cardinality and rank of $P$ in terms of data of the  ladder matrix $\bfX_{\bfS}$.

\begin{Corollary}\label{cor: regposet}
   Adopt \Cref{LadderSetting} and let $P$ be the poset of the join-irreducible elements of the lattice $\calL \times [r]$. Then, $|P| = r-1+\sum _{i=1}^{n}\Delta_i$ and $\rank(P)= \si{\calA^r} -1$.
\end{Corollary}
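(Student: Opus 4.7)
The plan is to derive both identities by inverting the classical formulas for the dimension and Castelnuovo--Mumford regularity of a Hibi ring in terms of combinatorial data of its poset of join-irreducibles, applied to $\calF(N)\cong \KK[\calL\times[r]]$ from \Cref{FiberLadder}. In other words, the corollary is not a new calculation but a re-reading of the formulas already proved in \Cref{analyticSpread} and \Cref{reg} against the known Hibi-ring dictionary recalled right before the statement.

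First, I would handle $|P|$ using the dimension formula for Hibi rings \cite[Theorem~6.38]{HerHiOh}, which gives $\dim\calF(N)=|P|+1$. Combining this with $\dim\calF(N)=\ell(N)=\ell(M)$ from \Cref{SAGBIandGB}\ref{analytic} and with the explicit value $\ell(M)=r+\sum_{i=1}^{n}\Delta_i$ from \Cref{analyticSpread}, I immediately obtain
$$|P|\;=\;\ell(M)-1\;=\;r-1+\sum_{i=1}^{n}\Delta_i,$$
which is the first assertion.

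Next, I would compute $\rank(P)$ by comparing two expressions for $\reg\calF(N)$. The Hibi-ring regularity formula \cite[Theorem~6.42]{HerHiOh} gives $\reg\calF(N)=|P|-\rank(P)-1$. On the other hand, by \Cref{SAGBIandGB}\ref{regToPd} we have $\reg\calF(N)=\reg\calF(M)$, and \Cref{reg} together with the identity $|P|=\ell(M)-1$ just established yields
$$\reg\calF(M)\;=\;\ell(M)-1-\si{\calA^r}\;=\;|P|-\si{\calA^r}.$$
Equating the two expressions for $\reg\calF(N)$ produces $\rank(P)=\si{\calA^r}-1$, as desired.

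No real obstacle is expected: once the Hibi-ring formulas of \cite[Theorems~6.38 and~6.42]{HerHiOh} are in place, both identities reduce to bookkeeping. The point of the corollary is conceptual rather than technical: it says that the size and rank of $P$ can be read off the ladder data $\bfS$ (via $\Delta_i$, $r$, and the sequence index $\si{\calA^r}$ computed by \Cref{MaxLengthClique}), so that $P$ itself never needs to be constructed.
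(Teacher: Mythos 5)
Your proposal is correct and coincides with the paper's own argument: both derive $|P|=r-1+\sum_{i=1}^{n}\Delta_i$ from \cite[Theorem~6.38]{HerHiOh} combined with \Cref{SAGBIandGB}~\ref{analytic} and \Cref{analyticSpread}, and both obtain $\rank(P)=\si{\calA^r}-1$ by equating the Hibi-ring regularity formula of \cite[Theorem~6.42]{HerHiOh} with the expression $\reg(\calF(M))=\ell(M)-1-\si{\calA^r}$ from \Cref{reg} via \Cref{SAGBIandGB}~\ref{regToPd}. There is nothing to add; the bookkeeping is exactly as in the paper.
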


\begin{proof}
    By \cite[Theorem 6.38]{HerHiOh}, the dimension of $\calF(N)$ is $\ell(N)= |P| +1$. It then follows from \Cref{SAGBIandGB}~\ref{analytic} and \Cref{analyticSpread} that $|P| = r-1+\sum _{i=1}^{n}\Delta_i $. 
    On the other hand, by \Cref{SAGBIandGB}~\ref{regToPd} and \cite[Theorem 6.42]{HerHiOh}, $\reg(\calF(M)) = \reg(\calF(N)) = |P| -\rank(P)-1$, whence \Cref{reg} implies that $\rank(P)= \si{\calA^r} -1$, as claimed. 
\end{proof}

\section{Multiplicity}\label{sec:multi}

The objective of this section is to obtain a combinatorial formula for the multiplicity of $\calF(M)$, where 
$M=\bigoplus_{i=1}^rL$ with $L=I_n(\bfX_{\bfS})$ and $\bfX_{\bfS}$ as in \Cref{LadderSetting}. For $n=2$ and $r=1$, the multiplicity of $\calF(L)$ was determined in \cite[Corollary 4.9]{CDFGLPS};
see also \cite[Corollary~4.5]{CHV96} for the calculation of $e(\calF(L))$ when $L$ is the ideal of maximal minors of a generic $2 \times m$ matrix.

In light of \Cref{SAGBIandGB}~\ref{multiplicity eq}, the task is reduced to calculating the multiplicity of $\calF(N)$. Since the latter is a Hibi ring (see \Cref{FiberLadder}), it then follows from \cite[Theorem 3.9]{BLHibi} that $e(\calF(N))$ is the number of maximal chains in  $\calL \times [r]$. In the next proposition, we reinterpret this result in terms of maximal cliques of the graph $\calG^r$ introduced in \Cref{LadderSetting}.

\begin{Proposition}
    \label{prop:mult_is_number_of_max_clique}
    Adopt \Cref{LadderSetting}. Then $e(\calF(M))=|\MC(\calG^r)|$.
\end{Proposition}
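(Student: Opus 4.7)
The plan is to string together several of the results already established in the paper, reducing the multiplicity computation step by step until it becomes a count of minimal generators of an Alexander dual, which in turn equals the number of maximal cliques of $\calG^r$.

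First, I would invoke \Cref{SAGBIandGB}~\ref{multiplicity eq} to pass from $\calF(M)$ to the monomial quotient: $e(\calF(M)) = e(T/\ini_\sigma(\calJ))$. Since $\ini_\sigma(\calJ)$ is a squarefree monomial ideal (generated in degree $2$ by \Cref{FiberLadder} and the subsequent discussion), I can apply \Cref{lem:Terai_lem} (Terai's lemma) to obtain
\[
e(T/\ini_\sigma(\calJ)) = \beta_{1,h_1}\bigl(T/(\ini_\sigma(\calJ))^\vee\bigr),
\]
where $h_1 = \indeg((\ini_\sigma(\calJ))^\vee)$ is the least degree of a minimal generator of the Alexander dual.

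Next, by \Cref{analyticSpread}, the Alexander dual ideal $(\ini_\sigma(\calJ))^\vee$ is equigenerated in degree $\dim T - \ell(M)$; in particular, every minimal generator has degree $h_1 = \dim T - \ell(M)$. Consequently, the graded Betti number $\beta_{1,h_1}(T/(\ini_\sigma(\calJ))^\vee)$ coincides with the total number $\mu((\ini_\sigma(\calJ))^\vee)$ of minimal monomial generators.

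Finally, I would appeal to \Cref{rmk:maximal_cliques}, which gives a bijection between the minimal monomial generators of $(\ini_\sigma(\calJ))^\vee$ and the maximal cliques of $\calG^r$ (via the correspondence $\bfA \mapsto T_{\bfA^\complement}$). Chaining these identifications yields
\[
e(\calF(M)) = e(T/\ini_\sigma(\calJ)) = \mu\bigl((\ini_\sigma(\calJ))^\vee\bigr) = |\MC(\calG^r)|,
\]
as desired. No genuinely new argument is required here; the result is essentially a bookkeeping consequence of Terai's formula together with the equigenerated property of $(\ini_\sigma(\calJ))^\vee$ already proved in \Cref{analyticSpread}. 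The only point that deserves care is verifying that the hypotheses of \Cref{lem:Terai_lem} apply, i.e.\ confirming that $\ini_\sigma(\calJ)$ is squarefree, but this follows immediately from the explicit description of its generators in \Cref{FiberLadder}.
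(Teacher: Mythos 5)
Your proposal is correct and follows essentially the same route as the paper's proof: reduce to $e(T/\ini_{\sigma}(\calJ))$ via \Cref{SAGBIandGB}, apply Terai's formula (\Cref{lem:Terai_lem}), use the equigeneratedness of $(\ini_{\sigma}(\calJ))^{\vee}$ from \Cref{analyticSpread} to identify $\beta_{1,h_1}$ with $\mu((\ini_{\sigma}(\calJ))^{\vee})$, and conclude by the clique correspondence of \Cref{rmk:maximal_cliques}. You merely make explicit the step equating the Betti number with the total number of minimal generators, which the paper leaves implicit.
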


\begin{proof}
By \Cref{SAGBIandGB}~\ref{multiplicity eq}, we have $e(\calF(M))= e(T/\ini_{\sigma}(\calJ))$. 
By \Cref{lem:Terai_lem},  this number is the minimal number of generators of the equigenerated squarefree monomial ideal $(\ini_{\sigma}(\calJ))^\vee$, which coincides with the number of maximal cliques of $\calG^r$ by \Cref{rmk:maximal_cliques}.
\end{proof}

We next prove that there is a natural formula relating the multiplicity of $\calF(M)$ with that of $\calF(L)$, in view of the  correspondence between the maximal cliques of $\calG^r$ and the maximal cliques of $\calG^1$ discussed in \Cref{addn+1}. 

\begin{Proposition}\label{multi-tor}
   Adopt \Cref{LadderSetting}. Then 
    $$e(\calF(M))=e(\calF(L))\binom{\ell(L)+r-2}{r-1} = e(\calF(L))\binom{\ell(M)-1}{r-1},$$
    where $\ell(L)$ and $\ell(M)$ are the analytic spreads of $L$ and $M$, respectively.
\end{Proposition}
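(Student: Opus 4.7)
The plan is to combine Proposition~\ref{prop:mult_is_number_of_max_clique} with the counting result in Corollary~\ref{CliquesG1} and the correspondence between maximal cliques of $\calG^r$ and $\calG^1$ given in Proposition~\ref{addn+1}. By Proposition~\ref{prop:mult_is_number_of_max_clique} applied to both $M$ and $L$ (i.e., with $r$ copies and with a single copy), we have
$$e(\calF(M))=|\MC(\calG^r)| \quad \text{and} \quad e(\calF(L))=|\MC(\calG^1)|.$$
So it suffices to establish the equality $|\MC(\calG^r)| = |\MC(\calG^1)| \cdot \binom{\ell(L)+r-2}{r-1}$.

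To do this, I will use the map that sends a maximal clique $\bfB^r \in \MC(\calG^r)$ to the unique maximal clique $\bfB^1 \in \MC(\calG^1)$ whose tuple of position changes $P^{\bfB^1}$ is obtained from $P^{\bfB^r}$ by deleting all entries equal to $n+1$. Proposition~\ref{addn+1} guarantees that this map is well-defined (the image is indeed a maximal clique of $\calG^1$) and surjective (every $\bfB^1\in\MC(\calG^1)$ arises this way). Corollary~\ref{CliquesG1} then provides the key counting ingredient: each fiber of this map has cardinality exactly $\binom{\ell(L)+r-2}{r-1}$. Summing over the fibers yields
$$|\MC(\calG^r)| = \sum_{\bfB^1 \in \MC(\calG^1)} \binom{\ell(L)+r-2}{r-1} = |\MC(\calG^1)|\binom{\ell(L)+r-2}{r-1},$$
which proves the first equality of the statement.

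For the second equality, recall from Theorem~\ref{analyticSpread} that $\ell(M)=\ell(L)+r-1$, hence $\ell(L)+r-2=\ell(M)-1$ and therefore $\binom{\ell(L)+r-2}{r-1}=\binom{\ell(M)-1}{r-1}$, completing the proof. There is essentially no obstacle here: all the combinatorial work has already been done in Proposition~\ref{addn+1} and Corollary~\ref{CliquesG1}, and the only remaining task is to assemble the pieces and invoke Proposition~\ref{prop:mult_is_number_of_max_clique} to translate the count of maximal cliques into a statement about multiplicities.
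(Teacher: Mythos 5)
Your proposal is correct and follows essentially the same route as the paper: the paper also reduces the statement to the clique count $|\MC(\calG^r)| = \binom{\ell(L)+r-2}{r-1}\,|\MC(\calG^1)|$ via \Cref{addn+1} and \Cref{CliquesG1}, and uses $\ell(M)=\ell(L)+r-1$ from \Cref{analyticSpread} for the second equality. The only cosmetic difference is that you invoke \Cref{prop:mult_is_number_of_max_clique} directly for both $M$ and $L$, whereas the paper phrases the reduction through $e(\calF(N))$ via \Cref{SAGBIandGB}; these amount to the same thing, and your explicit fiber-counting argument just spells out what the paper leaves implicit.
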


\begin{proof}
    By \Cref{SAGBIandGB}~\ref{multiplicity eq} and \Cref{analyticSpread}, it suffices to prove that 
    $$e(\calF(N)) = e(\calF(\ini_{\tau} (L)))\binom{\ell(L)+r-2}{r-1}.$$ 
    Let $\calG^r$ and $\calG^1$ be the graphs as in \Cref{LadderSetting}. The claim follows once we show that $|\MC(\calG^r)| = \binom{\ell(L)+r-2}{r-1} |\MC(\calG^1)|$, which is clear, using \Cref{addn+1}, \Cref{CliquesG1}, and their proofs.
\end{proof}

With \Cref{multi-tor} in mind, we now focus on the case $r=1$ and work on determining the multiplicity of $\calF(L)$. By \Cref{prop:mult_is_number_of_max_clique} it suffices to compute the cardinality of the set of maximal cliques of $\calG^1$. When $L= I_n(\bfX)$ and $\bfX$ is a generic $n \times m $ matrix, $\calF(L)$ is the coordinate ring of a Grassmannian variety and its multiplicity $e(\calF(L))$ can then be calculated by counting the number of standard Young tableaux associated to a certain partition, see for example, \cite[Theorem 2.31]{Grassmannians} or \cite[Chapter 9]{FultonYoung}.
Inspired by this result, we next proceed to show that, when $\bfX_{\bfS}$ is an arbitrary ladder matrix, one instead needs to count the number of standard skew Young tableaux associated to a particular skew partition. 
We begin by recalling the notion of a standard skew Young tableau.

\begin{Definition}
Let $\Lambda$ be a positive integer. A \emph{partition} of $\Lambda$ is a sequence $\bflam=(\lambda_1,\ldots,\lambda_k)$ of positive integers such that $\lambda_i\ge \lambda_{i+1}$ for all $i\in [k-1]$ and $\sum_{i =1}^k \lambda_i = \Lambda$. A \emph{Young diagram} (or \emph{Ferrers diagram}), $F^{\bflam}$, of shape  $\bflam$ is a collection of empty cells having $k$ left-justified rows, with row $i$ containing $\lambda_i$ cells for $i \in [k]$. We assume the convention that the cell $(i,j)$ is located in row $i$ and column $j$.

Let $\bflam=(\lambda_1,\ldots,\lambda_k)$ be a partition and let $\bfmu=(\mu_1,\ldots,\mu_k)$ be a sequence of nonnegative integers such that $\mu_i\ge \mu_{i+1}$ for all $i\in [k-1]$. If $\lambda_i\ge \mu_i$ for all $i \in [k]$, we say that $\bflam \ge \bfmu$. Given such $\bflam, \bfmu$ with $\bflam \ge \bfmu$, one defines the \emph{skew Young diagram} $F^{\bflam/\bfmu}$ by superimposing the Young diagram $F^{\bflam}$ with $F^{\bfmu}$ and removing all the cells of  $F^{\bfmu}$, with the convention that $F^{\bfmu}$ has no cells in row $i$ if $\mu_i=0$. In this case, we call $\bflam/\bfmu$ a \emph{skew partition} of $K=\sum_{i=1}^k(\lambda_i-\mu_i)$.

 A \emph{skew Young tableau} of a skew partition $\bflam/\bfmu$ of $K$ is a skew Young diagram of shape $\bflam/\bfmu$, whose cells are filled with the integers $1, \ldots, K$ without any repetition. By abuse of notation, we denote such a skew Young tableau by $F^{\bflam/\bfmu}$.  A skew Young tableau $F^{\bflam/\bfmu}$ is said to be \emph{standard} if the integers $1,2,\ldots,K$ are inserted in such a way that the entries in each row and in each column are strictly increasing, when read left to right and top to bottom, respectively.
\end{Definition}

\begin{Example}\label{ex: skew young}
    Let $\bflam=(6,4,4)$ and $\bfmu=(1,1,0)$. Then the skew Young diagram of shape $\bflam/\bfmu$ is shown in \Cref{fig:skew example} and a  standard skew Young tableau is shown in \Cref{fig:stand skew example}. 
\begin{figure}[H]
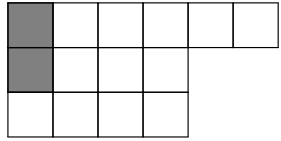
    
$\ydiagram{6,4,4}*[*(gray)]{1,1}$  
\caption{The skew Young diagram of shape $\bflam/\bfmu$}\label{fig:skew example}
\end{figure} 
\begin{figure}[H]
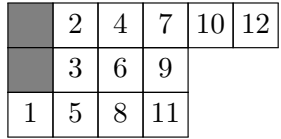
    
$\begin{ytableau}
*(gray) &2& 4& 7& 10 &12\\
*(gray) & 3& 6 &9 \\
 1& 5& 8 &11 
\end{ytableau}$ 
\caption{A standard skew Young tableau of shape $\bflam/\bfmu$}\label{fig:stand skew example}
\end{figure}

\end{Example}

Having established the relevant terminology, we now return to our goal of calculating $e(\calF(L))$. 

\begin{Theorem}\label{thm: clique skew tableau}
     Adopt \Cref{LadderSetting} with $r=1$. For all $i\in [n-1]$ let $\mu_i=\sum_{j=1}^{n-i}(\epsilon_{j}-1)$ and let $\mu_n=0$. Let $\bfmu=(\mu_{1}, \ldots, \mu_{n})$ and $\bflam=(\mu_1+\Delta_n, \ldots, \mu_{n-1}+\Delta_2, \mu_n+\Delta_1)$. 
    
    Then there is a one-to-one correspondence between the set of the maximal cliques of $\mathcal{G}^1$ and the standard skew Young tableaux of shape $\bflam/\bfmu$.
\end{Theorem}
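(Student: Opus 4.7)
The plan is to exhibit an explicit bijection $\Phi$ between $\MC(\calG^1)$ and the set of standard skew Young tableaux of shape $\bflam/\bfmu$, based on the combinatorial characterization of maximal cliques given in \Cref{proving Strategy}\ref{jth i-1 after jth i}. Combined with \Cref{PL}, that result says a maximal clique of $\calG^1$ is uniquely encoded by its position-change tuple $P^{\bfB}=(p_1,\ldots,p_{\ell-1})$, which is an ordering of the multiset $P_{\calL\times[1]}$ (with $\Delta_i$ copies of each $i\in[n]$) satisfying the constraint: for every $i\in[2,n]$ and $j\in[\Delta_{i-1}-\epsilon_{i-1}+1]$, the $j$-th occurrence of $i$ precedes the $(j+\epsilon_{i-1}-1)$-th occurrence of $i-1$. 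Before constructing $\Phi$, a short routine check -- using $\mu_i-\mu_{i+1}=\epsilon_{n-i}-1\ge 0$ together with the inequality $\Delta_{n-i}\le \Delta_{n-i+1}+\epsilon_{n-i}-1$ from \Cref{LadderSetting} -- would confirm that $\bflam/\bfmu$ is indeed a skew partition with $\sum_{i=1}^n \Delta_{n-i+1}=\ell-1$ cells, where $\ell=\ell(L)$.

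Define $\Phi(\bfB)$ by placing each $k\in[\ell-1]$ in row $n-p_k+1$ of $\bflam/\bfmu$, in the leftmost empty cell of that row. By \Cref{PL} the value $p$ appears exactly $\Delta_p$ times in $P^{\bfB}$, matching the number of cells in row $n-p+1$ of the skew shape, so $\Phi(\bfB)$ is a well-defined filling whose rows are strictly increasing by construction. The crucial step (and main obstacle) is the column-strictly-increasing condition: for cells $(i,\mu_i+s)$ and $(i+1,\mu_i+s)$ that both lie in $\bflam/\bfmu$, the shape constraints force $s\in[\Delta_{n-i}-\epsilon_{n-i}+1]$, and the identity $\mu_i+s=\mu_{i+1}+(s+\epsilon_{n-i}-1)$ reveals that the entry at $(i,\mu_i+s)$ is the position of the $s$-th occurrence of $n-i+1$ in $P^{\bfB}$, while the entry at $(i+1,\mu_i+s)$ is the position of the $(s+\epsilon_{n-i}-1)$-th occurrence of $n-i$. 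The required column inequality is therefore precisely the constraint from \Cref{proving Strategy}\ref{jth i-1 after jth i} applied with $i\mapsto n-i+1$ and $j\mapsto s$, which holds because $\bfB$ is a maximal clique.

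Finally, I would define the inverse $\Phi^{-1}$: it sends a standard skew Young tableau $T$ to the tuple $(p_1,\ldots,p_{\ell-1})$ given by $p_k=n-i_k+1$, where $i_k$ is the row of $T$ containing $k$. The row lengths of $\bflam/\bfmu$ force $(p_1,\ldots,p_{\ell-1})$ to be an ordering of the multiset $P_{\calL\times[1]}$, and reversing the translation above shows that the column condition of $T$ is equivalent to the constraint of \Cref{proving Strategy}\ref{jth i-1 after jth i}; hence $(p_1,\ldots,p_{\ell-1})=P^{\bfB}$ for a unique $\bfB\in\MC(\calG^1)$, completing the bijection. The hard part is precisely this column-to-constraint translation, which requires carefully matching the row-index reversal $i\mapsto n-i+1$ and the horizontal offset $\mu_i-\mu_{i+1}=\epsilon_{n-i}-1$ between consecutive rows of the skew shape with the $\epsilon_{i-1}-1$ gap in \Cref{proving Strategy}\ref{jth i-1 after jth i}; once this dictionary is in place, both directions of $\Phi$ follow immediately.
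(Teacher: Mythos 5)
Your proposal is correct and follows essentially the same route as the paper's proof: both encode a maximal clique by its position-change tuple (via \Cref{PL} and \Cref{proving Strategy}), place the occurrences of value $i$ in row $n-i+1$ of $F^{\bflam/\bfmu}$, and translate column-strictness into the constraint of \Cref{proving Strategy}~\ref{jth i-1 after jth i} through the offset identity $\mu_{n-i+1}+j=\mu_{n-i+2}+j+\epsilon_{i-1}-1$, with the inverse map defined by reading row indices back into position changes exactly as in the paper.
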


\begin{proof} 
   Recall that by \Cref{LadderSetting} we have for all $i\in [n-1]$,  $\Delta_{i}\le \Delta_{i+1}+\epsilon_{i}-1$  and hence  $\lambda_i=\mu_i+\Delta_{n+1-i}\ge \mu_{i+1}+ \Delta_{n-i}=\lambda_{i+1}$. Therefore, $\bflam/\bfmu$ is a skew partition and by \Cref{analyticSpread} we have $\sum_{i=1}^{n}(\lambda_i-\mu_i)=\sum_{i=1}^n{\Delta_i}=\ell-1$, with $\ell=\ell(L)$.  Let $\calG=\calG^1$. We will create a one-to-one correspondence between the set $\MC(\calG)$ and the set $\mathcal{Y}$ of all standard skew Young tableaux of shape $\bflam/\bfmu$.

Let $\bfB\in \MC(\calG)$ and let $P^{\bfB}=(p_1,\ldots,p_{\ell-1})$ be its tuple of position changes. For each $i\in [n]$, suppose $p_{i_1}=\cdots =p_{i_{\Delta_i}}=i \in P^{\bfB}$. Without loss of generality we may assume that $i_1<i_2<\cdots <i_{\Delta_i}$. Let $F^{\bflam/\bfmu}$ be the skew Young diagram of shape $\bflam/\bfmu$ and for each $i$ assign the numbers $i_1, \ldots, i_{\Delta_i}$ from left to right in the $(n-(i-1))$-th row of $F^{\bflam/\bfmu}$. We claim that with this assignment $F^{\bflam/\bfmu}$ becomes a standard skew Young tableaux.

Since the entries in each row of $F^{\bflam/\bfmu}$ are increasing from left to right, we only need to show that the columns have strictly increasing entries from top to bottom. For each $i $ and $j$, $i_j$ appears in the $(n-i+1, \mu_{n-i+1}+j)$-th cell of $F^{\bflam}$. Let $k$ be the entry in the $(n-i+2, \mu_{n-i+1}+j)$-th cell of  $F^{\bflam}$ and thus $p_k=i-1$. Now by construction $\mu_{n-i+1}+j=\mu_{n-i+2}+\epsilon_{i-1}-1+j$ and therefore $k$ is the $(j+\epsilon_{i-1}-1)$-th copy of $i-1$ in $P^{\bfB}$. Hence, by \Cref{proving Strategy}~\ref{jth i-1 after jth i} we have that $i_j<k$, as claimed.  

   Conversely, let  $F_1^{\bflam/\bfmu}$ be a standard skew Young tableau of shape $\bflam/\bfmu$. Notice that there are exactly $\ell-1=\sum_{i=1}^{n}\Delta_i$ cells in $F_1^{\bflam/\bfmu}$ and each number $k\in[\ell-1]$ will appear exactly once. For each $k\in [\ell-1]$, set $q_k=i$ whenever the number $k$ appears in the $(n-i+1)$-th row of $F_1^{\bflam/\bfmu}$. Let $P=(q_1, \ldots, q_{\ell-1})$. We claim that $P=P^{\bfB_1}$ for some $\bfB_1\in \MC(\calG)$. 
   
   Since the $(n-i+1)$-th row of $F_1^{\bflam/\bfmu}$ has $\Delta_i$ cells, then the value of $q_k$ must appear exactly $ \Delta_{i}$ times. Thus, the collection $\{q_1, \ldots, q_{\ell-1}\}$ is $P_{\calL \times[1]}$. Write $\bfB_1=[\bdb^1, \ldots, \bdb^{\ell}]$, where $\bdb^1=\bdu$ and $\bdb^{i+1}=\bdb^{i}+\bde_{q_i}$ for $i \in [\ell-1]$. We will prove that $\bfB_1 \in \MC(\calG)$ via \Cref{proving Strategy}. 
  
   Let $i\in [2,n]$ and let $q_{i_1}=\cdots =q_{i_{\Delta_i}}=i \in P$ with $i_1<i_2<\cdots <i_{\Delta_i}$. Let $q_{i_j}=i$ be the $j$-th entry of $i$, that is $i_j$ appears in the $(n-i+1, \mu_{n-i+1}+j)$-th cell  of $F_1^{\bflam}$. 
  Now consider the $(n-i+2, \mu_{n-i+1}+j)$-th cell of $F_1^{\bflam}$ and suppose the entry in that cell is $j'$. Then $q_{j'}=i-1$. Moreover, since $\mu_{n-i+1}+j=\mu_{n-i+2}+\epsilon_{i-1}-1+j$, then $q_{j'}$ is the $(\epsilon_{i-1}-1+j)$-th entry of $i-1$ in $P$.
   Since in a standard skew Young tableau each column is strictly increasing, then $i_j<j'$; in other words, in $P$ the $j$-th entry of $i$, which is $q_{i_j}$, appears before $q_{j'}$, which is the $(\epsilon_{i-1}-1+j)$-th entry of $i-1$. Therefore, $\bfB_1$ is a maximal clique and $P=P^{\bfB_1}$ by \Cref{proving Strategy}.
   \end{proof}

\begin{Example} 
We revisit the standard skew Young tableau from \Cref{ex: skew young}.

\begin{figure}[H]    
$\begin{ytableau}
*(gray) &2& 4& 7& 10 &12\\
*(gray) & 3& 6 &9 \\
 1& 5& 8 &11 
\end{ytableau}$ 
\end{figure}

Using the algorithm described in the proof of \Cref{thm: clique skew tableau}, we determine the tuple of position changes $(q_1, \ldots, q_{\ell-1})$ of the maximal clique $\bfB$ corresponding to this standard skew Young tableau.
The second row is the $(3-2+1)$-th row and thus $q_3=q_6=q_9=2$.  Similarly, from the first and third rows we obtain $q_2=q_4=q_7=q_{10}=q_{12}=3$ and $q_1=q_5=q_8=q_{11}=1$, respectively. Hence $P=(1,3,2,3,1,2,3,1,2,3,1,3)$ and $P=P^{\bfB}$ for some $\bfB\in \MC(\calG)$. In fact, with similar calculations as in \Cref{ex:Construction}, we can see that $\bfB=\calA^{1}$ for the graph $\calG^1$ corresponding to the ladder matrix from \Cref{LadderMatrix}.
\end{Example}

In view of \Cref{thm: clique skew tableau}, the task of counting the number of maximal cliques of $\mathcal{G}^1$ is reduced to counting the number of the standard skew Young tableaux of shape $\bflam/\bfmu$. To this end, we will use the notions of hook length and excited diagrams, which we now recall.

\begin{Definition}
Let $F^{\bflam}$ be a Young diagram of a partition $\bflam$. 
\begin{enumerate}[a]
    \item The \emph{hook length} of a cell $c=(i,j)$, denoted by $h_{\bflam}(c)$, is the total number of cells $(i',j')$ with $i'=i, \,j'\ge j$ or $i'>i, \, j'=j$. 
    In other words, the hook length $h_{\bflam}(i,j)$ counts the total number of cells on the right and below the cell $(i,j)$, with $(i,j)$ counted once.
    \item Let $D$ be a subset of $F^{\bflam}$. A cell $c=(i,j)\in D$ is called \emph{active} if $(i+1, j)$, $(i, j+1)$ and $(i+1, j+1)$ are all in $F^{\bflam}\setminus D$. For an active cell $c=(i,j) \in D$, define $\alpha_{c}(D)$ to be the set obtained by replacing $(i,j)\in D$ by $(i+1,j+1)$. We call this replacement an \emph{excited move}.
    \item Let $\bflam/\bfmu$ be a skew partition. An \emph{excited diagram} of $\bflam/\bfmu$  is a subset of $F^{\bflam}$ of size $\sum \mu_i$, obtained from $F^{\bfmu}$ by a sequence of excited moves in $F^{\bflam}$. 
    Let $\calE(\bflam/\bfmu)$ denote the set of all excited diagrams of  $F^{\bflam/\bfmu}$.
\end{enumerate}
\end{Definition}

We illustrate the notion of excited diagram in the following example.  For more information on excited diagrams, see for example \cite{MORALES2018350}.
\begin{Example}
    Let $\bflam=(4,3,3)$ and $\bfmu=(2,1,0)$. The excited diagrams of $F^{\bflam/\bfmu}$ are shown in \Cref{Excited diagrams} below. Following the arrows, each diagram is obtained from the previous one by performing exactly one excited move.
    
\begin{figure}[H]
\begin{tikzpicture}
  \node at (0,0) {$\begin{ytableau}
*(yellow) &*(yellow) &  &\\
*(yellow) & &   \\
 & &   
\end{ytableau}$};
\node at (3.5,0) {$\begin{ytableau}
*(yellow) &*(yellow) &  &\\
 & &   \\
 &*(yellow) & 
\end{ytableau}$};
\node at (7,0) {$\begin{ytableau}
*(yellow) & &  &\\
&&*(yellow)   \\
 & *(yellow)&
\end{ytableau}$};
\node at (10.5,0) {$\begin{ytableau}
 *(white) & &  &\\
&*(yellow)&*(yellow)   \\
 & *(yellow)&
\end{ytableau}$};

\node at (3.5,-2.75) { $\begin{ytableau}
*(yellow) & &  &\\
*(yellow) & & *(yellow)  \\
 & &
\end{ytableau}$};

\draw[->, thick] (1.5,0) -- (2,0) node[midway, above] {};
\draw[->, thick] (5,0) -- (5.5,0) node[midway, above] {};
\draw[->, thick] (8.5,0) -- (9,0) node[midway, above] {};
\draw[->, thick] (1.2,-1.6) -- (1.75,-1.9) node[midway, above] {};
\draw[->, thick] (5.2,-1.8) -- (5.75,-1.5) node[midway, above] {};
  \end{tikzpicture}
  \caption{The excited diagrams $\calE(\bflam/\bfmu)$ for the skew partition $\bflam/\bfmu$} \label{Excited diagrams}
\end{figure}

\end{Example}

The following well-known result calculates the number of standard skew Young tableaux of a given shape.

\begin{Theorem}[{\cite[Theorem 1.2]{MORALES2018350}}] 
\label{hookthm}
Let $\bflam/\bfmu$ be a skew partition and let $K=\sum_{i=1}^k(\lambda_i-\mu_i)$. Then the number of standard skew Young tableaux of shape $\bflam/\bfmu$ is given by
$$ f^{\bflam/\bfmu}= K! \mathlarger{\sum}_{D \in \calE(\bflam/\bfmu)} \frac{1}{\prod_{c \in F^{\bflam}\setminus D} h_{\bflam}(c)}.$$
\end{Theorem}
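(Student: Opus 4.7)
The plan is to establish the Naruse hook length formula via the interpretation of $f^{\bflam/\bfmu}$ as an intersection number in the equivariant cohomology of a Grassmannian, following the strategy of Naruse and Morales--Pak--Panova. First, I would fix an ambient rectangle containing $F^{\bflam}$, embed the relevant partitions into $H^{*}(\Gr(k, N))$, and identify $f^{\bflam/\bfmu}/K!$ with a Schubert intersection number: since standard skew tableaux of shape $\bflam/\bfmu$ correspond to saturated chains in the Young lattice from $\bfmu$ to $\bflam$, the Pieri rule yields $f^{\bflam/\bfmu}/K! = \int_{\Gr(k,N)} \sigma_{\bfmu^{\vee}}\cdot (\sigma_{\square})^{K}\cdot \sigma_{\bflam}$, where $\sigma_{\square}$ is the class of a hyperplane Schubert divisor.

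Next, I would lift this intersection to $T$-equivariant cohomology, where $T$ is the maximal torus acting on $\Gr(k,N)$, and apply the Atiyah--Bott--Berline--Vergne localization formula. The $T$-fixed points are indexed by partitions $\nu \subseteq (N-k)^{k}$, and localization expresses the equivariant intersection as a sum over those $\nu$ whose associated Schubert class is supported on the intersection cycle. The restrictions of equivariant Schubert classes to fixed points are governed by Billey's formula in terms of reduced words, producing rational functions in the equivariant parameters.

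The key combinatorial input is to match the non-vanishing fixed-point contributions with the set $\calE(\bflam/\bfmu)$ of excited diagrams. An excited move on a subset $D \subseteq F^{\bflam}$ corresponds precisely to a local rearrangement of a non-intersecting lattice path system in the Young diagram, and these path systems are in turn indexed by the fixed points that survive after evaluating the equivariant specialization. Thus I would establish a bijection $\nu \longleftrightarrow D$ between the supporting fixed points and excited diagrams, and then verify that the equivariant weight at the fixed point $\nu$, after the appropriate specialization of the equivariant parameters (so that $K!$ extracts out), equals
\[
\frac{1}{\prod_{c \in F^{\bflam}\setminus D} h_{\bflam}(c)}.
\]

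The main obstacle is the last two steps: producing the explicit bijection between Schubert fixed points and excited diagrams, and showing that the localization weights collapse to products of reciprocal hook lengths. This is where the combinatorics of excited diagrams is essential -- each excited move changes the equivariant weight by a factor that telescopes against the hook lengths, and tracking these cancellations carefully is the delicate part of the argument. A purely algebraic alternative, used in Morales--Pak--Panova, is to start from the Okounkov--Olshanski formula for the factorial Schur polynomial $s_{\bflam/\bfmu}(x \,|\, a)$, specialize the variables to extract $f^{\bflam/\bfmu}$, and then rewrite the resulting sum over reverse plane partitions of shape $\bfmu$ as a sum over excited diagrams via a bijection that tracks the positions of entries; the hook-length denominators then emerge directly from the factorial structure.
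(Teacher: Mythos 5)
This statement is quoted by the paper directly from Morales--Pak--Panova (\cite[Theorem 1.2]{MORALES2018350}) and is used as a black box in the proof of \Cref{thm:multiplicity}; the paper contains no proof of it, so there is no internal argument to compare yours against. Measured instead against the published proofs, your sketch does follow the two standard routes in outline: Naruse's geometric argument via equivariant Schubert calculus, and the algebraic argument via factorial/shifted Schur functions that Morales--Pak--Panova carry out.

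That said, as a proof the proposal has genuine gaps beyond the ones you flag. (i) There is an index slip at the very first step: $\int_{\Gr(k,N)} \sigma_{\bfmu^{\vee}}\cdot(\sigma_{\square})^{K}\cdot\sigma_{\bflam}$ computes $f^{\bfmu/\bflam}$, which vanishes since $\bfmu\subsetneq\bflam$; the Pieri-rule chain count you want is extracted by $\int_{\Gr(k,N)} \sigma_{\bfmu}\cdot(\sigma_{\square})^{K}\cdot\sigma_{\bflam^{\vee}}$. (ii) The two steps you defer --- the bijection between contributing fixed points and $\calE(\bflam/\bfmu)$, and the collapse of the localization weights to $\prod_{c\in F^{\bflam}\setminus D} h_{\bflam}(c)^{-1}$ --- are not bookkeeping: together they are exactly the Ikeda--Naruse/Kreiman excited-diagram formula for the restriction of an equivariant Schubert class to a fixed point (equivalently, for factorial Schur functions), combined with the specialization under which the linear factor attached to a cell $(i,j)$ becomes $\lambda_i+\lambda'_j-i-j+1=h_{\bflam}(i,j)$. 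This is essentially the whole content of the theorem, so as written the plan assumes what it must prove; it becomes a legitimate derivation only if you import Ikeda--Naruse/Kreiman as a citation, at which point you are in the same position as the paper. (iii) The ``purely algebraic alternative'' is misattributed. The Okounkov--Olshanski formula is a genuinely different positive formula, with terms indexed by reverse plane partitions (equivalently, semistandard tableaux) of shape $\bfmu$, and in general its index set has a different cardinality from $\calE(\bflam/\bfmu)$, so the term-by-term rewriting you propose does not exist. What Morales--Pak--Panova actually do is combine the shifted-Schur identity $f^{\bflam/\bfmu}=\bigl(K!/\prod_{c\in F^{\bflam}}h_{\bflam}(c)\bigr)\, s^{*}_{\bfmu}(\bflam)$ (which does come from Okounkov--Olshanski's theory) with the excited-diagram evaluation of $s^{*}_{\bfmu}(\bflam)$, which is again the Ikeda--Naruse/Kreiman input from (ii).
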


\begin{Example}\label{ex: excited}
Let $\bflam=(6,4,4)$ and $\bfmu=(1,1,0)$ be as in \Cref{ex: skew young}. We use the formula in \Cref{hookthm} to compute the number of standard skew Young tableaux of shape $\bflam/\bfmu$. We need to consider all possible excited diagrams associated to $\bflam/\bfmu$, which we show in \Cref{fig:skew example excited} below, along with the hook lengths for each cell of $F^{\bflam}$.
 
\begin{figure}[H]    
$\begin{ytableau}
*(yellow)8 &7& 6& 5&2  &1\\
*(yellow) 5& 4& 3 &2 \\
 4& 3& 2 &1 
\end{ytableau}$ \quad $\begin{ytableau}
*(yellow)8 &7& 6& 5&2  &1\\
 5& 4& 3 &2 \\
 4& *(yellow)3& 2 &1 
\end{ytableau}$ \quad $\begin{ytableau}
8 &7& 6& 5&2  &1\\
 5& *(yellow) 4& 3 &2 \\
 4& *(yellow)3& 2 &1 
\end{ytableau}$
\caption{The excited skew Young diagrams of shape $\bflam/\bfmu$ with the hook lengths $h_{\bflam}(c)$.} \label{fig:skew example excited}
\end{figure}

Since $K=\sum_{i=1}^3(\lambda_i-\mu_i)=12$, then 
 $$f^{\bflam/\bfmu}=12!\Big[\frac{1}{7!\cdot 4! \cdot 2}+\frac{1}{7!\cdot 5 \cdot 4 \cdot 2 \cdot 2}+\frac{1}{8!\cdot 5 \cdot 2 \cdot 2}\Big]=3762.$$
\end{Example}

In our final result, we are now able to give an explicit formula for the multiplicity of both $\calF(L)$ and $\calF(M)$.

\begin{Theorem}\label{thm:multiplicity}
   Adopt \Cref{LadderSetting}. For all $i\in [n-1]$ let $\mu_i=\sum_{j=1}^{n-i}(\epsilon_{j}-1)$ and let $\mu_n=0$. Let $\bfmu=(\mu_{1}, \ldots, \mu_{n})$ and $\bflam=(\mu_1+\Delta_n, \ldots, \mu_{n-1}+\Delta_2, \mu_n+\Delta_1)$.  
Then 
    $$e(\calF(L))=(\ell(L)-1)!\mathlarger{\sum}_{D \in \calE(\bflam/\bfmu)} \frac{1}{\prod_{c \in F^{\bflam}\setminus D} h_{\bflam}(c)} = \left(\sum_{i=1}^n \Delta_i \right) \mathlarger{\sum}_{D \in \calE(\bflam/\bfmu)} \frac{1}{\prod_{c \in F^{\bflam}\setminus D} h_{\bflam}(c)}$$
and
$$ e(\calF(M))=\binom{\ell(L)+r-2}{r-1}(\ell(L)-1)!\mathlarger{\sum}_{D \in \calE(\bflam/\bfmu)} \frac{1}{\prod_{c \in F^{\bflam}\setminus D} h_{\bflam}(c)},$$    
    where $h_{\bflam}(c)$ denotes the hook length of the cell $c$ of the Young diagram $F^{\bflam}$ and $\calE(\bflam/\bfmu)$ is the set of excited diagrams of $\bflam/\bfmu$.
 \end{Theorem}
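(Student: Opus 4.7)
The plan is to assemble the formula by chaining together results that have already been established earlier in the paper. By \Cref{prop:mult_is_number_of_max_clique} we have $e(\calF(M)) = |\MC(\calG^r)|$, and by \Cref{multi-tor} this further equals $\binom{\ell(L)+r-2}{r-1}\cdot e(\calF(L))$. Hence it suffices to determine $e(\calF(L))$, which by the $r=1$ case of \Cref{prop:mult_is_number_of_max_clique} coincides with $|\MC(\calG^1)|$; the final formula is then obtained by multiplying through by the binomial coefficient.

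The decisive ingredient for counting $|\MC(\calG^1)|$ is \Cref{thm: clique skew tableau}, which provides an explicit bijection between $\MC(\calG^1)$ and the set of standard skew Young tableaux of shape $\bflam/\bfmu$ for the $\bflam$ and $\bfmu$ defined in the statement. This immediately yields $e(\calF(L)) = f^{\bflam/\bfmu}$.

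To pass from this cardinality to the explicit closed form on the right-hand side of the theorem, I would apply the Naruse--Morales excited-diagrams hook length formula, \Cref{hookthm}, to the skew shape $\bflam/\bfmu$. This requires identifying the total number of cells $K = \sum_{i=1}^n(\lambda_i - \mu_i)$. From the definition $\lambda_i = \mu_i + \Delta_{n+1-i}$ one reads $\lambda_i - \mu_i = \Delta_{n+1-i}$, whence
\begin{equation*}
K \;=\; \sum_{i=1}^n \Delta_{n+1-i} \;=\; \sum_{i=1}^n \Delta_i \;=\; \ell(L) - 1,
\end{equation*}
where the last equality uses \Cref{analyticSpread} with $r=1$. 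Plugging this value of $K$ into \Cref{hookthm} produces the stated formula for $e(\calF(L))$, and multiplication by $\binom{\ell(L)+r-2}{r-1}$ yields the one for $e(\calF(M))$.

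Since each step above is either a direct invocation of a previously established result or a one-line combinatorial identification, no new substantial obstacle arises at this stage: the technical content was concentrated in the combinatorial bijection of \Cref{thm: clique skew tableau} (matching the tuple of position changes of a maximal clique with a column-strict, row-strict filling) and in the reduction from $r$ to $r=1$ via \Cref{multi-tor}. The only verification to carry out here is the identification $K = \ell(L)-1$, which is transparent from the definitions of $\bflam$, $\bfmu$, and the formula for the analytic spread.
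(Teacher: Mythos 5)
Your proposal is correct and follows exactly the same route as the paper's own proof, which likewise assembles the result from \Cref{prop:mult_is_number_of_max_clique}, the bijection of \Cref{thm: clique skew tableau}, the hook length formula \Cref{hookthm}, and \Cref{multi-tor} for the passage from $L$ to $M$. Your explicit verification that $K=\sum_{i=1}^n\Delta_i=\ell(L)-1$ via \Cref{analyticSpread} is a detail the paper absorbs into the proof of \Cref{thm: clique skew tableau}, so nothing is missing.
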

 
\begin{proof}
The formula for $e(\calF(L))$ follows from \Cref{prop:mult_is_number_of_max_clique}, \Cref{thm: clique skew tableau}, and \Cref{hookthm}. Finally, \Cref{multi-tor} yields the formula for $e(\calF(M))$.
\end{proof}

\begin{Example}
     We revisit one last time the ladder matrix of \Cref{LadderMatrix} and we compute the multiplicity of $\calF(M)$ when $r=2$. Recall that in this case, $\Delta_1=4, \Delta_2=3, \Delta_3=5$ and $\epsilon_1=2$ and $\epsilon_2=1$. Therefore, $\mu_1=1, \mu_2=1$, and $\mu_3=0$. Hence $\bflam=(6,4,4)$ and $\bfmu=(1,1,0)$. Notice that in \Cref{ex: excited} we already counted the number of skew Young tableaux of shape $\bflam/\bfmu$. Therefore, by \Cref{thm:multiplicity} we have $e(\calF(L))=3762$ and $e(\calF(M))=13(3762)=48906$, since $\ell(L)= 13$.
\end{Example}

\vspace{0.5cm}
\begin{acknowledgment*}
 This project originated at the Women in Commutative Algebra II Workshop held in Trento, Italy. We gratefully acknowledge the support of the Women in Commutative Algebra Network and CIRM (Trento). The travel of several participants to Trento was partially funded by the NSF grant DMS–2324929. We also thank the Simons-Laufer Mathematical Research Institute (SLMath) which hosted our group during the SRiM 2024 program, with additional funding from the NSF grant DMS–1928930.

Furthermore, Kriti Goel was supported by the grant CEX2021-001142-S (Excelencia Severo Ochoa), funded by MICIU/AEI/10.13039/501100011033 (Ministerio de Ciencia, Innovaci\'on y Universidades, Spain). Kuei-Nuan Lin was partially supported by the NSF-AWM Travel Grant Program 2023 and the AMS-Simons Research Enhancement Grants for Primarily Undergraduate Institution Faculty 2024. 
Whitney Liske's participation was made possible in part by a grant from the Association for Women in Mathematics’ Mathematical Endeavors Revitalization Program.
Maral Mostafazadehfard was partially funded by CAPES–Brasil (Finance Code 001) and the MathAmSud project ALGEO. Finally, Haydee Lindo was supported by NSF grant DMS-2137949. 

Finally, it is our pleasure to thank Ben Blum-Smith, Matt Mastroeni, Yi-Huang Shen, and Gabriel Sosa for helpful and insightful discussions.
\end{acknowledgment*}

\bibliography{ReesBib}
\end{document}